\newtheorem{lemma}{Lemma}[section]
\newtheorem{theorem}[lemma]{Theorem}
\newtheorem{remark}[lemma]{Remark}
\newtheorem{proposition}[lemma]{Proposition}
\newtheorem{definition}[lemma]{Definition}
\newtheorem{corollary}[lemma]{Corollary}
\newtheorem{example}[lemma]{Example}
\begin{document}

\title{\textbf{The structure of restricted Leibniz algebras}
\author{ Baoling Guan$^{1,2}$,  Liangyun Chen$^{1}$
 \date{{\small {$^1$ School of Mathematics and Statistics, Northeast Normal
 University,\\
Changchun 130024, China}\\{\small {$^2$ College of Sciences, Qiqihar
University, Qiqihar 161006, China}}}}}}

\date{ }
\maketitle
\begin{quotation}
\small\noindent \textbf{Abstract}: The paper studies the structure
of restricted Leibniz algebras. More specifically speaking, we first
give the equivalent definition of restricted Leibniz algebras, which
   is by far more tractable than that of a
restricted  Leibniz algebras in [6]. Second, we obtain some
properties of $p$-mappings and restrictable Leibniz algebras, and
discuss restricted Leibniz algebras with semisimple elements.
Finally, Cartan decomposition and the uniqueness of decomposition
for restricted Leibniz algebras are determined.

\noindent{\textbf{Keywords}}: Restricted Leibniz algebras; restrictable Leibniz algebras; $p$-mapping; semisimple elements; Tori; Cartan decomposition

 \small\noindent \textbf{Mathematics
Subject Classification 2000}: 17A32, 17B50
\renewcommand{\thefootnote}{\fnsymbol{footnote}}
 \footnote[0]{Corresponding
author(L. Chen): chenly640@nenu.edu.cn.}
 \footnote[0]{Supported by NNSF
of China (No.11171055),  Natural Science Foundation of Jilin
province (No. 201115006), Scientific Research Foundation for
Returned Scholars
    Ministry of Education of China and the Fundamental Research Funds for the Central Universities(No. 11SSXT146).}
\end{quotation}
\setcounter{section}{0}

\section{Introduction}

    The concept of a  restricted Lie algebra is
       attributable to N. Jacobson  in 1943. It is well known that  the Lie algebras
       associated with algebraic groups   over a field of  characteristic $p$  are
       restricted Lie algebras \cite{sf}. Now,
   restricted Lie algebras attract more and more attentions.
 For example:  restricted Lie superalgebras\cite{pe},  restricted
        color Lie  algebras\cite{ba},  restricted Lie triple system\cite{htl} and
     restricted Leibniz algebras \cite{dl} were studied, respectively. As is well
known,  restricted Lie  algebras play predominant roles in the
theories of modular Lie algebras \cite{sf1}. Analogously,  the study
of restricted Leibniz algebras  will play
      an important role in
    the  classification  of the  finite-dimensional modular simple Leibniz algebras.

      Leibniz algebras were first introduced
 as nonantisymmetric generalization of Lie algebras  in 1979 \cite{lj}. In
recent years the study of Leibniz algebras over a field of prime
characteristic  obtained  some important results. In \cite{das},
Dzhumadil'daev and Abdykassymova (2001) introduced the notion of
restricted Leibniz algebras(left Leibniz algebras). In \cite{dl},
the authors mainly proved that there is a functor-$p$-Leib from the
category of diassociative algebras to the category of restricted
Leibniz algebras(right Leibniz algebras) and constructed its
restricted enveloping algebra.
   As   a natural generalization of a restricted Lie  algebra, it seems desirable to investigate the possibility of establishing
 a parallel theory  for  restricted  Leibniz algebras.  However, in dealing
    with  a  restricted   Leibniz algebras,
    we can not employ all methods of restricted Lie algebras. This is because the product in  Leibniz algebras does not have skew symmetry.

 Similar to restricted Lie algebras, the paper gives the structure of
restricted Leibniz algebras(left Leibniz algebras). Let us briefly
describe the content and  setup of the present article. In Sec. 2,
the equivalent definition of restricted Leibniz algebras is given,
which
   is by far more tractable than that of a
restricted  Leibniz algebras in [6].
   In Sec. 3, we obtain some properties of $p$-mappings and restrictable Leibniz algebras.
  In Sec. 4, we study restricted Leibniz algebras whose elements are semisimple.  In Sec. 5, Tori and Cartan decomposition of restricted Leibniz algebras are
  characterized. In Sec. 6, the uniqueness of decomposition for restricted Leibniz algebras is determined.

 In the paper, $\mathbb{F}$ is a field of prime characteristic. Let $L$ denote a finite-dimensional Leibniz
 algebra(left Leibniz algebras) over $\mathbb{F}$.
We write $\mathbb{N}$ for nonnegative integers. For restricted
Leibniz algebra, the concepts of homomorphisms and
$p$-homomorphisms, derivations, $p$-representations are similar to
restricted Lie algebras\cite{sf}. $\mathrm{Der}L$ is also denoted by
the set consisting of all derivations of Leibniz algebra $L.$ All
other notions and concepts refer to the reference \cite{sf}.

\begin{definition} {\rm\cite{pa}}\,  A Leibniz algebra over $\mathbb{F}$ is an $\mathbb{F}$-module $L$
equipped with a bilinear mapping, called bracket,
              $$[-,-]:L\times L\rightarrow L$$
satisfying the Leibniz identity:
    $$[[x,y],z]=[x,[y,z]]-[y,[x,z]]$$
for all $ x,y,z\in L.$
\end{definition}

\begin{lemma}{\rm\cite{sf}} \label{l2.1.1}\,  Let $V$ and $W$ be $\mathbb{F}$-vector spaces and $f:V\rightarrow W$ be a $p$-semilinear mapping.
Then the following statements hold:

$\mathrm{(1)}$ $\mathrm{ker}(f)$ is an $\mathbb{F}$-subspace of $V.$

$\mathrm{(2)}$ $f(V)$ is an $\mathbb{F}^{p}$-subspace of $W.$ If $\mathbb{F}$ is perfect, then $f(V)$ is an  $\mathbb{F}$-subspace of $W.$

$\mathrm{(3)}$ $\mathrm{dim}_{\mathbb{F}}V=\mathrm{dim}_{\mathbb{F}}\mathrm{ker}(f)+\mathrm{dim}_{\mathbb{F}^{p}}f(V).$

$\mathrm{(4)}$ If $\langle f(V)\rangle=W$ and $\mathrm{dim}_{\mathbb{F}}W=\mathrm{dim}_{\mathbb{F}}V,$ then $\mathrm{ker}(f)=0.$
\end{lemma}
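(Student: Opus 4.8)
The plan is to treat the four statements in the order given, since (1) and (2) are direct verifications of the subspace axioms while (3) and (4) rest on a well-chosen basis together with a dimension count. Throughout I will use only the two defining properties of a $p$-semilinear map: additivity, $f(u+v)=f(u)+f(v)$, and the twisted homogeneity $f(\lambda v)=\lambda^{p}f(v)$ for $\lambda\in\mathbb{F}$.

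For (1), I would check that $\mathrm{ker}(f)$ is closed under the $\mathbb{F}$-operations: closure under addition is immediate from additivity, and closure under scalars follows because $f(\lambda v)=\lambda^{p}f(v)=0$ whenever $f(v)=0$. For (2), the same two identities show $f(V)$ is closed under addition, and since every $\mu\in\mathbb{F}^{p}$ has the form $\mu=\lambda^{p}$ we get $\mu f(v)=f(\lambda v)\in f(V)$, so $f(V)$ is an $\mathbb{F}^{p}$-subspace. When $\mathbb{F}$ is perfect the Frobenius map is surjective, so $\mathbb{F}^{p}=\mathbb{F}$ and the $\mathbb{F}^{p}$-subspace is automatically an $\mathbb{F}$-subspace.

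The heart of the argument is (3). I would pick an $\mathbb{F}$-basis $e_{1},\dots,e_{k}$ of $\mathrm{ker}(f)$ and extend it to an $\mathbb{F}$-basis $e_{1},\dots,e_{k},e_{k+1},\dots,e_{n}$ of $V$, then claim that $f(e_{k+1}),\dots,f(e_{n})$ is an $\mathbb{F}^{p}$-basis of $f(V)$. That these span $f(V)$ over $\mathbb{F}^{p}$ follows by applying $f$ to a general vector and using $f(e_{i})=0$ for $i\le k$, whereupon the coefficients emerge as $p$-th powers. The delicate step is $\mathbb{F}^{p}$-linear independence: from a relation $\sum_{i>k}\mu_{i}f(e_{i})=0$ with $\mu_{i}\in\mathbb{F}^{p}$ I would write each $\mu_{i}=\lambda_{i}^{p}$, where $\lambda_{i}\in\mathbb{F}$ is \emph{unique} because the Frobenius endomorphism of a field is injective, and rewrite the relation as $f\!\left(\sum_{i>k}\lambda_{i}e_{i}\right)=0$; this forces $\sum_{i>k}\lambda_{i}e_{i}\in\mathrm{ker}(f)$, and the basis property gives all $\lambda_{i}=0$, hence all $\mu_{i}=0$. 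This yields $\mathrm{dim}_{\mathbb{F}^{p}}f(V)=n-k$, which is exactly the asserted formula. The injectivity-of-Frobenius point is the one place where characteristic $p$ is genuinely used, and I expect it to be the main obstacle, rather than the computations themselves.

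Finally, for (4) I would combine (3) with the hypothesis $\langle f(V)\rangle=W$. Taking an $\mathbb{F}^{p}$-basis of $f(V)$, its elements also $\mathbb{F}$-span $\langle f(V)\rangle=W$, so $\mathrm{dim}_{\mathbb{F}}W\le\mathrm{dim}_{\mathbb{F}^{p}}f(V)$. Using (3) together with the hypothesis $\mathrm{dim}_{\mathbb{F}}W=\mathrm{dim}_{\mathbb{F}}V$ then gives
$$\mathrm{dim}_{\mathbb{F}}V=\mathrm{dim}_{\mathbb{F}}W\le\mathrm{dim}_{\mathbb{F}^{p}}f(V)=\mathrm{dim}_{\mathbb{F}}V-\mathrm{dim}_{\mathbb{F}}\mathrm{ker}(f),$$
which forces $\mathrm{dim}_{\mathbb{F}}\mathrm{ker}(f)=0$, that is, $\mathrm{ker}(f)=0$.
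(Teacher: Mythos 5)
The paper offers no proof of this lemma at all --- it is quoted verbatim from the Strade--Farnsteiner reference \cite{sf} --- so there is no in-paper argument to diverge from; your proof is correct and is precisely the standard one from that source: extend a basis of $\mathrm{ker}(f)$ to a basis of $V$, show the images of the complementary basis vectors form an $\mathbb{F}^{p}$-basis of $f(V)$ via injectivity of the Frobenius map, and deduce (4) by the dimension count $\mathrm{dim}_{\mathbb{F}}W\le\mathrm{dim}_{\mathbb{F}^{p}}f(V)$. Two minor remarks: in step (3) you only need the \emph{existence} of the roots $\lambda_{i}$ with $\lambda_{i}^{p}=\mu_{i}$ (which is the definition of $\mu_{i}\in\mathbb{F}^{p}$), not their uniqueness, and the subtraction of dimensions in (4) tacitly assumes finite-dimensionality, which is the paper's standing hypothesis.
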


\begin{lemma} {\rm\cite{sf}}\label{l2.1.2}\,  Let $f:V\rightarrow V$  be $p$-semilinear. Then the following statements are equivalent.

$\mathrm{(1)}$ $\langle f(V)\rangle=V.$

$\mathrm{(2)}$ For every $v\in V,$ there exist $\alpha_{1},\cdots, \alpha_{n}\in \mathbb{F}$ such that $v=\sum\limits_{i=1}^{n}\alpha_{i}f^{i}(v).$
\end{lemma}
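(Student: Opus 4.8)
The plan is to treat the two implications separately, the implication $(2)\Rightarrow(1)$ being essentially immediate. Indeed, if every $v\in V$ can be written as $v=\sum_{i=1}^{n}\alpha_i f^i(v)$, then since each iterate $f^i(v)=f\bigl(f^{i-1}(v)\bigr)$ with $i\ge 1$ lies in $f(V)$, we get $v\in\langle f(V)\rangle$; as $v$ was arbitrary this yields $V\subseteq\langle f(V)\rangle\subseteq V$, hence $\langle f(V)\rangle=V$. No genuine difficulty arises here.

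For $(1)\Rightarrow(2)$ I would fix $v\in V$ and pass to the cyclic subspace $C:=\langle v,f(v),f^2(v),\dots\rangle$, the $\mathbb{F}$-span of all iterates of $f$ at $v$; it is finite-dimensional and, by $p$-semilinearity, $f$-stable in the sense that $f(C)\subseteq C$. Writing $D:=\langle f(v),f^2(v),\dots\rangle$ one checks directly that $\langle f(C)\rangle=D$, so the whole statement reduces to proving $D=C$, i.e. $v\in D$. The crucial reduction is therefore the following claim, which I would isolate and prove in general: if $\langle f(V)\rangle=V$ and $C$ is any $f$-stable subspace of $V$, then $\langle f(C)\rangle=C$. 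Granting this, applying it to the cyclic $C$ gives $C=\langle f(C)\rangle=D$, whence $v\in C=D$ admits an expression $v=\sum_{i=1}^{n}\alpha_i f^i(v)$ because $D$ is finite-dimensional.

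To prove the claim I would argue by an adapted basis. First observe the elementary fact that $\langle f(V)\rangle=V$ forces $f$ to carry any $\mathbb{F}$-basis of $V$ to an $\mathbb{F}$-basis: for a basis $\{x_1,\dots,x_n\}$ one has $f(\sum a_i x_i)=\sum a_i^{p}f(x_i)$, so $\langle f(V)\rangle=\langle f(x_1),\dots,f(x_n)\rangle=V$, and $n$ spanning vectors in an $n$-dimensional space are a basis. Now choose an $\mathbb{F}$-basis $c_1,\dots,c_m$ of $C$ and extend it to an $\mathbb{F}$-basis $c_1,\dots,c_m,d_1,\dots,d_{n-m}$ of $V$. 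Then $f(c_1),\dots,f(c_m),f(d_1),\dots,f(d_{n-m})$ is a basis of $V$; in particular $f(c_1),\dots,f(c_m)$ are $\mathbb{F}$-linearly independent. Since $C$ is $f$-stable these $m$ vectors lie in the $m$-dimensional space $C$, so they form a basis of $C$, giving $C=\langle f(c_1),\dots,f(c_m)\rangle\subseteq\langle f(C)\rangle\subseteq C$ and hence $\langle f(C)\rangle=C$.

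The step I expect to be the real obstacle is exactly this claim, and the point worth flagging is why the rank--nullity information of Lemma \ref{l2.1.1} does not by itself suffice. From $\langle f(V)\rangle=V$ one does deduce $\ker f=0$ via Lemma \ref{l2.1.1}(4), and injectivity descends to $f|_C$; but over an imperfect field injectivity of $f|_C$ only yields $\dim_{\mathbb{F}^p}f(C)=\dim_{\mathbb{F}}C$, which is strictly weaker than $\dim_{\mathbb{F}}\langle f(C)\rangle=\dim_{\mathbb{F}}C$ (the $\mathbb{F}^p$-independent images $f(c_i)$ need not be $\mathbb{F}$-independent in general). The adapted-basis argument above is designed precisely to bridge this gap between the $\mathbb{F}^p$-dimension of $f(C)$ and the $\mathbb{F}$-span $\langle f(C)\rangle$, by transporting the global $\mathbb{F}$-independence of $\{f(c_i),f(d_j)\}$ to the local statement that $\{f(c_i)\}$ spans $C$.
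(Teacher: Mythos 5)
Your proof is correct, and there is nothing in the paper to set it against: the lemma is imported verbatim from \cite{sf} and stated without proof, so the only benchmark is the argument itself. On its merits, your reduction is exactly right. The direction $(2)\Rightarrow(1)$ is immediate as you say, and for $(1)\Rightarrow(2)$ your isolated claim --- that $\langle f(V)\rangle=V$ forces $\langle f(C)\rangle=C$ for every $f$-stable subspace $C$ --- is the correct pivot, and your adapted-basis proof of it is sound: from $f\bigl(\sum a_ix_i\bigr)=\sum a_i^{p}f(x_i)$ one gets $\langle f(V)\rangle=\langle f(x_1),\dots,f(x_n)\rangle$, so $f$ carries any basis of $V$ to a basis, and the images $f(c_1),\dots,f(c_m)$ of a basis of $C$ are then $m$ linearly independent vectors inside the $m$-dimensional space $C$, hence span it. Your diagnosis of why Lemma~\ref{l2.1.1} alone cannot close the argument over an imperfect field is also accurate: injectivity only gives $\dim_{\mathbb{F}^{p}}f(C)=\dim_{\mathbb{F}}C$, and $\mathbb{F}^{p}$-independence does not imply $\mathbb{F}$-independence (already $w$ and $tw$ over $\mathbb{F}_p(t)$ are $\mathbb{F}^{p}$-independent but $\mathbb{F}$-dependent, since $a^{p}+b^{p}t=0$ would force $t\in\mathbb{F}^{p}$), so the passage from the $\mathbb{F}^{p}$-dimension of $f(C)$ to the $\mathbb{F}$-span genuinely needs the global basis transport you set up. One hypothesis you should state rather than leave tacit: everything (the finite-dimensionality of the cyclic subspace $C$, the existence of a finite adapted basis of $V$) uses $\dim_{\mathbb{F}}V<\infty$. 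That is consistent with the paper's standing conventions and with the dimension formula in Lemma~\ref{l2.1.1}(3), but it is genuinely necessary and not cosmetic: for the $p$-semilinear down-shift on a space with basis $(e_i)_{i\geq 1}$, defined by $f(e_1)=e_1$ and $f(e_i)=e_{i-1}$ for $i\geq 2$, one has $\langle f(V)\rangle=V$, yet every iterate $f^{i}(e_2)$ with $i\geq 1$ lies in $\mathbb{F}e_1$, so $e_2$ admits no expression $\sum_{i\geq 1}\alpha_i f^{i}(e_2)$. With that hypothesis made explicit, your proof stands as a complete and self-contained replacement for the citation.
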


\begin{lemma}{\rm\cite{sf}} \label{l2.2.2}\,  Let $f$ be an endomorphism of a vector space $V$ and let $\chi$ be a polynomial such that $\chi(f)=0.$
 Then the following statements hold:

$\mathrm{(1)}$ If $\chi =q_{1}q_{2}$ and $q_{1}, q_{2}$ are relatively prime, then $V$ decomposes into a direct sum of $f$-invariant subspaces $V=U\oplus W$ such that $q_{1}(f)(U)=0=q_{2}(f)(W).$

$\mathrm{(2)}$ $V$ decomposes into a direct sum of $f$-invariant subspaces $V =V_{0}\oplus V_{1},$
for which $f|_{V_{0}}$ is nilpotent and $f|_{V_{1}}$ is invertible.

\end{lemma}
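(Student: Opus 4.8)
The plan is to prove (1) by the classical Bézout/coprimality argument, and then to obtain (2) as a special case of (1) after factoring the appropriate power of $t$ out of $\chi$; this is essentially the primary decomposition together with the Fitting decomposition for a single endomorphism, and no finite-dimensionality of $V$ is needed.

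For part (1), I would set $U=\mathrm{ker}\,q_{1}(f)$ and $W=\mathrm{ker}\,q_{2}(f)$, which are the subspaces forced by the requirements $q_{1}(f)(U)=0$ and $q_{2}(f)(W)=0$. Since $q_{1}$ and $q_{2}$ are relatively prime in $\mathbb{F}[t]$, there exist $a,b\in\mathbb{F}[t]$ with $aq_{1}+bq_{2}=1$, hence $a(f)q_{1}(f)+b(f)q_{2}(f)=\mathrm{id}_{V}$. First I would show $V=U+W$: for $v\in V$ write $v=b(f)q_{2}(f)(v)+a(f)q_{1}(f)(v)$, and observe that applying $q_{1}(f)$ to the first summand and $q_{2}(f)$ to the second gives $0$, because $q_{1}(f)q_{2}(f)=\chi(f)=0$; thus the first summand lies in $U$ and the second in $W$. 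Next I would show the sum is direct: if $x\in U\cap W$, evaluating $x=a(f)q_{1}(f)(x)+b(f)q_{2}(f)(x)$ yields $x=0$. Finally, each summand is $f$-invariant because $f$ commutes with every polynomial in $f$ and hence preserves the kernel of $q_{i}(f)$.

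For part (2), I would assume $\chi\neq0$ (otherwise the hypothesis is vacuous) and write $\chi(t)=t^{m}g(t)$ with $m\geq0$ and $g(0)\neq0$. Taking $q_{1}=t^{m}$ and $q_{2}=g$, the two factors are relatively prime precisely because $g(0)\neq0$ forces $t\nmid g$, so part (1) applies and gives $V=V_{0}\oplus V_{1}$ with $V_{0}=\mathrm{ker}\,f^{m}$ and $V_{1}=\mathrm{ker}\,g(f)$. By construction $f|_{V_{0}}$ is nilpotent, while on $V_{1}$ I would pass from $g(0)\neq0$ to invertibility of $f$ by writing $g(t)=g(0)+th(t)$: the relation $g(0)\,\mathrm{id}+f\,h(f)=0$ on $V_{1}$ exhibits $-g(0)^{-1}h(f)$ as a two-sided inverse of $f|_{V_{1}}$, using that $f$ and $h(f)$ commute. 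The degenerate case $m=0$ simply gives $V_{0}=0$ and $V_{1}=V$ with $f$ already invertible.

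The computations throughout are routine. The only points demanding care are the directness of the sum in (1) and the passage from $g(0)\neq0$ to invertibility of $f|_{V_{1}}$ in (2); both rest on the same two ingredients, the Bézout identity and the commutativity of polynomials in $f$. I therefore expect no genuine obstacle here, merely the bookkeeping of which polynomial annihilates which summand.
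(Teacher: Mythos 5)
Your proposal is correct: the Bézout identity $a q_{1}+b q_{2}=1$ yields $V=\ker q_{1}(f)\oplus\ker q_{2}(f)$ exactly as you argue, and deducing (2) from (1) via $\chi=t^{m}g$ with $g(0)\neq 0$, inverting $f|_{V_{1}}$ through $g(t)=g(0)+th(t)$, is sound, including your handling of the degenerate cases $\chi\neq 0$ and $m=0$. The paper itself offers no proof of this lemma --- it is imported with a citation to \cite{sf} (Strade--Farnsteiner) --- and your argument is precisely the standard primary/Fitting decomposition proof given in that source, so the two approaches coincide.
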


\begin{lemma} {\rm\cite{sf}}\label{l2.2.3}\,  Let $V$ be a vector space over  $\mathbb{F}$ and let $x,y$ be
elements of $\mathrm{End}_{\mathbb{F}}(V)$ such that there is $t\in
\mathbb{N}\backslash\{0\}$ with $ (\mathrm{ad}x)^{t}(y)=0.$ Suppose
that $q\in \mathbb{F}[\chi]$ is a polynomial, then $V_{0}(q(x))$ is
invariant under $y.$
\end{lemma}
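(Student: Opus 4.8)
The plan is to reduce the statement to a purely associative computation inside $\mathrm{End}_{\mathbb{F}}(V)$, working with the left- and right-multiplication operators. Write $L_a$ and $R_a$ for the operators $z\mapsto az$ and $z\mapsto za$ on $\mathrm{End}_{\mathbb{F}}(V)$; these commute for any fixed $a$, and $\mathrm{ad}\, a=L_a-R_a$. First I would show that the hypothesis $(\mathrm{ad}\, x)^{t}(y)=0$ forces $(\mathrm{ad}\, q(x))^{t}(y)=0$ as well. Since $L$ and $R$ are algebra homomorphisms of $\mathbb{F}[\chi]$ into the commutative subalgebra generated by $L_x,R_x$, one has $\mathrm{ad}\, q(x)=q(L_x)-q(R_x)$, and because $q(\lambda)-q(\mu)$ is divisible by $\lambda-\mu$ we may write $\mathrm{ad}\, q(x)=(L_x-R_x)\,g(L_x,R_x)=(\mathrm{ad}\, x)\,g(L_x,R_x)$ for a suitable polynomial $g$. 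As all these operators commute, $(\mathrm{ad}\, q(x))^{t}(y)=g(L_x,R_x)^{t}\,(\mathrm{ad}\, x)^{t}(y)=0$. Thus it suffices to prove the claim for the single endomorphism $f:=q(x)$ under the assumption $(\mathrm{ad}\, f)^{t}(y)=0$.

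The main step is an exact expansion of $f^{N}y$ as an operator. Since $R_f$ commutes with $\mathrm{ad}\, f=L_f-R_f$ and $L_f=R_f+\mathrm{ad}\, f$, the binomial theorem applied in the commutative algebra generated by $R_f$ and $\mathrm{ad}\, f$ gives $L_f^{N}=\sum_{k=0}^{N}\binom{N}{k}R_f^{\,N-k}(\mathrm{ad}\, f)^{k}$. Applying this operator identity to $y$ and using $(\mathrm{ad}\, f)^{k}(y)=0$ for $k\geq t$ truncates the sum, yielding $f^{N}y=\sum_{k=0}^{t-1}\binom{N}{k}\bigl((\mathrm{ad}\, f)^{k}(y)\bigr)\,f^{\,N-k}$, where I have rewritten $R_f^{\,N-k}(w)=wf^{\,N-k}$. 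The point is that in every surviving summand the factor $f^{\,N-k}$ sits on the right, next to the vector we will feed in.

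To conclude, let $v\in V_{0}(f)=V_{0}(q(x))$ and pick $m$ with $f^{m}(v)=0$. Choosing $N\geq m+t-1$ forces $N-k\geq m$ for every $k\in\{0,\dots,t-1\}$, so $f^{\,N-k}(v)=0$ and hence $f^{N}(yv)=\sum_{k=0}^{t-1}\binom{N}{k}\bigl((\mathrm{ad}\, f)^{k}(y)\bigr)\bigl(f^{\,N-k}(v)\bigr)=0$. Therefore $yv\in V_{0}(f)$, which is exactly the asserted $y$-invariance; in the finite-dimensional case one may take $m=\dim_{\mathbb{F}}V$ uniformly, so that $f^{\,m+t-1}$ annihilates all of $y\bigl(V_{0}(f)\bigr)$ at once.

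I expect the only genuinely delicate point to be the reduction in the first paragraph, namely justifying $\mathrm{ad}\, q(x)=(\mathrm{ad}\, x)\,g(L_x,R_x)$ and that $g(L_x,R_x)$ commutes with $(\mathrm{ad}\, x)^{t}$, since everything afterwards is a clean binomial manipulation. The crucial observation that makes the whole argument work is to expand $L_f$ in terms of the commuting pair $R_f$ and $\mathrm{ad}\, f$, so that the nilpotence of $\mathrm{ad}\, f$ on $y$ pushes all but finitely many powers of $f$ to the right, where they annihilate $v$.
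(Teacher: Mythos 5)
Your proof is correct and complete: the reduction $\mathrm{ad}\,q(x)=(\mathrm{ad}\,x)\,g(L_{x},R_{x})$ with everything commuting, followed by the binomial expansion $f^{N}y=\sum_{k=0}^{t-1}\binom{N}{k}\bigl((\mathrm{ad}\,f)^{k}(y)\bigr)f^{N-k}$ for $f=q(x)$ and the choice $N\geq m+t-1$, is exactly the standard argument, and it is valid in characteristic $p$ since the binomial identity holds over any commutative ring and no coefficient needs to be inverted. The paper itself gives no proof of this lemma (it is quoted from the reference [Strade--Farnsteiner]), and your argument coincides with the proof given there, so there is nothing to flag.
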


\begin{definition} {\rm\cite{pa}}\, Let $H$ be a subspace of Leibniz algebra $L.$
$H$ is called a subalgebra of $L$, if $[H,H]\subseteq H$; $H$ is
called a left ideal of $L$, if $[L,H]\subseteq H;$ $H$ is called a
right ideal of $L$, if $[H,L]\subseteq H;$ $H$ is called an ideal of
$L$, if $[L,H]\subseteq H$ and $[H,L]\subseteq H.$
\end{definition}

\begin{definition} {\rm\cite{pa}} \, Let $L$ be a Leibniz algebra. The sequence $(L^{n})_{n\in\mathbb{N}\backslash \{0\}}$ of Leibniz algebra
$L$ given by $L^{1}:=L,$ $L^{n+1}:=[L,L^{n}].$ Then
$(L^{n})_{n\in\mathbb{N}\backslash \{0\}}$ is the descending central
series of $L.$ $L$ is called nilpotent, if there is $t\in
\mathbb{N}\backslash\{0\}$ such that $L^{t}=0.$  An abelian Leibniz
algebra $L$ is described by the condition $L^{2}=0.$
\end{definition}
\begin{definition} {\rm\cite{aao}}\, Let $L$ be a Leibniz algebra. The sequence $(L^{[n]})_{n\in\mathbb{N}\backslash \{0\}}$
defined by means of $L^{[1]}:=L,$ $L^{[n+1]}:=[L^{[n]},L^{[n]}]$ is
called the derived series of $L.$ $L$ is called solvable, if there
is $t\in \mathbb{N}\backslash\{0\}$ such that $L^{[t]}=0.$
\end{definition}

\begin{theorem} {\rm\cite{b}}\, (Engel's Theorem) Let $L$ be a Leibniz algebra. Suppose that the left multiplication operator $L_{a}$ is nilpotent for all $a\in L.$ Then $L$ is nilpotent.
\end{theorem}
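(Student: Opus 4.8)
The plan is to pass from the Leibniz algebra $L$ to the associative algebra generated by its left multiplications and to run the classical Engel argument there. For $a\in L$ write $L_a\in\mathrm{End}_{\mathbb{F}}(L)$ for the left multiplication $L_a(x)=[a,x]$. The decisive observation is that these operators close under the commutator bracket: for every $x\in L$ the Leibniz identity $[[a,b],x]=[a,[b,x]]-[b,[a,x]]$ gives
$$[L_a,L_b](x)=[a,[b,x]]-[b,[a,x]]=[[a,b],x]=L_{[a,b]}(x),$$
so that $[L_a,L_b]=L_{[a,b]}$. Consequently $\mathcal{L}:=\{L_a:a\in L\}$ is a Lie subalgebra of $\mathrm{End}_{\mathbb{F}}(L)$, finite-dimensional and, by hypothesis, consisting entirely of nilpotent endomorphisms of $L$.

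Next I would prove the key Engel lemma for $\mathcal{L}$, namely that a Lie algebra of nilpotent endomorphisms of a nonzero finite-dimensional space $V$ annihilates some nonzero vector. The argument is by induction on $\dim\mathcal{L}$: one picks a maximal proper subalgebra $\mathcal{H}$, uses that $\mathrm{ad}_{L_a}$ is nilpotent whenever $L_a$ is (it is the difference of the commuting nilpotent operators $T\mapsto L_aT$ and $T\mapsto TL_a$ on $\mathrm{End}_{\mathbb{F}}(V)$), and applies the inductive hypothesis to the action of $\mathcal{H}$ on $\mathcal{L}/\mathcal{H}$ to conclude that $\mathcal{H}$ is an ideal of codimension one. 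Writing $\mathcal{L}=\mathcal{H}+\mathbb{F}z$, the nonzero $\mathcal{H}$-fixed subspace $W=\{v\in V:\mathcal{H}v=0\}$ is invariant under $z$, and a nonzero vector of $\ker(z|_W)$ is annihilated by all of $\mathcal{L}$. A second induction, now on $\dim V$ with $V=L$, upgrades this to a complete flag $0=V_0\subset V_1\subset\cdots\subset V_n=L$ satisfying $\mathcal{L}(V_i)=[L,V_i]\subseteq V_{i-1}$, where $n=\dim_{\mathbb{F}}L$.

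Finally I would read off nilpotency of $L$ from the flag. Any product $L_{a_1}\cdots L_{a_n}$ carries $V_n=L$ successively into $V_{n-1},\dots,V_0=0$ and therefore vanishes. On the other hand a straightforward induction on $k$, based on $L^{k+1}=[L,L^k]$, shows that $L^{k+1}$ is spanned by the elements $L_{a_1}L_{a_2}\cdots L_{a_k}(a_{k+1})$, so that $L^{k+1}\subseteq V_{n-k}$; with $k=n$ this gives $L^{n+1}\subseteq V_0=0$, and $L$ is nilpotent. I expect the genuine difficulty to lie entirely in the key Engel lemma---specifically in the codimension-one ideal step---while the reduction $[L_a,L_b]=L_{[a,b]}$ and the passage from the flag to the lower central series are routine and, importantly, use only left multiplications, which is exactly what the hypothesis controls.
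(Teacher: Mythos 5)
Your proposal is correct, and it is essentially the argument of the cited source: the paper itself states this theorem without proof, quoting Barnes \cite{b}, whose short proof is exactly your reduction --- pass to the Lie algebra $\mathcal{L}=\{L_{a}\mid a\in L\}$ of left multiplications via $L_{[a,b]}=L_{a}L_{b}-L_{b}L_{a}$ (a fact the paper records in Section 1), invoke the classical linear Engel theorem for $\mathcal{L}$, and observe that $L^{k+1}$ is spanned by left-normed products $L_{a_{1}}\cdots L_{a_{k}}(a_{k+1})$, so a vanishing product of length $n=\dim_{\mathbb{F}}L$ forces $L^{n+1}=0$. The only difference is that you re-prove the Engel lemma (codimension-one ideal step and the flag) from scratch, where Barnes simply cites the Lie-theoretic result; both versions are valid in arbitrary characteristic and match the paper's definition of nilpotency via $L^{n+1}=[L,L^{n}]$.
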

\begin{definition}{\rm\cite{bdw}}\, A bimodule of Leibniz algebra $L$ is a vector space $M$ over $\mathbb{F}$
equipped with two bilinear compositions denoted by $ma$ and $am,$ for any $a\in L$ and $m\in M,$ satisfy
$$(ma)b=m[a,b]-a(mb),$$
$$(am)b=a(mb)-m[a,b],$$
$$[a,b]m=a(bm)-b(am).$$
\end{definition}
In \cite{bdw}, the author denotes by $\mathrm{End}(M)$ the associative algebra of all endomorphisms of the vector space $M.$
If $M$ is a bimodule of Leibniz algebra $L,$ then each of the mappings $S_{a}:m\rightarrow ma$
and $T_{a}:m\rightarrow am$ is an endmorphism of $M,$ and the mappings $S:a\rightarrow S_{a},$ $T:a\rightarrow T_{a}$
of $L$ into $\mathrm{End}(M)$ are linear. Moreover, $L_{[a,b]}=L_{a}L_{b}-L_{b}L_{a}$ for all $a,b\in L.$ Thus the set $\{L_{a}|a\in L\}$
forms a Lie algebra of linear transformations of $L.$
\begin{definition} {\rm\cite{pa}} A representation of a Leibniz algebra $L$ on a vector space $M$ is a pair $(S,T)$ of linear maps
 $S:a\rightarrow S_{a},$ $T:a\rightarrow T_{a}$ of $L$ into $\mathrm{End}(M)$ such that
$$S_{a}\circ S_{b}=S_{[a,b]}-T_{a}\circ S_{b},$$
$$S_{b}\circ T_{a}=T_{a}\circ S_{b}-S_{[a,b]},$$
$$T_{[a,b]}=T_{a}\circ T_{b}-T_{b}\circ T_{a}$$
for all $a,b\in L.$
\end{definition}

The reference \cite{pa} also pointed out that the vector space $M$ equipped with the compositions $ma=S_{a}(m)$ and $am=T_{a}(m)$ is a bimodule of $L.$
 Clearly, the two concepts of representation and bimodule are equivalent.
Let $L$ be a Leibniz algebra. The right multiplication $R_{a}$ (resp., the left multiplication $L_{a}$) of $L$ determined
by any element $a\in L$ is the endomorphism of $L$ defined by $R_{a}(x)=[x,a]$ (resp., $L_{a}(x)=[a,x]$) for all $x\in L.$
The pair $(R,L)$ of linear mappings $R:a\rightarrow R_{a},$ $L:a\rightarrow L_{a}$ is a representation of $L$ on $L$ itself.
In particular, $L:a\rightarrow L_{a}$ is called the adjoint representation of $L.$

\section{The equivalent definition of restricted Leibniz algebras}
\begin{definition}{\rm \cite{sf}}\,  Let $L$ be a Lie algebra over $\mathbb{F}.$ A mapping
$[p]:L\rightarrow L, a\mapsto a^{[p]}$ is called a $p$-mapping, if

$\mathrm{(1)}$  $L_{a^{[p]}}=(L_{a})^{p},\ \forall a\in L.$

$\mathrm{(2)}$  $(\alpha a)^{[p]}=\alpha^{p}a^{[p]},\ \forall a\in L, \alpha\in \mathbb{F}.$

$\mathrm{(3)}$
$(a+b)^{[p]}=a^{[p]}+b^{[p]}+\sum\limits_{i=1}^{p-1}s_{i}(a,b),$\\
where $ (L(a\otimes X+b\otimes 1))^{p-1}(a\otimes 1)=\sum\limits_{i=1}^{p-1}is_{i}(a,b)\otimes X^{i-1} $ in $L\otimes_{\mathbb{F}}\mathbb{F}[X], \forall a,b\in L.$
The pair $(L,[p])$ is referred to as a restricted Lie algebra.
\end{definition}

\begin{definition} \label{d1.1.18}\,   Let $L$ be a Leibniz algebra over $\mathbb{F}.$ A mapping
$[p]:L\rightarrow L, a\mapsto a^{[p]}$ is called a $p$-mapping, if

$\mathrm{(1)}$  $L_{a^{[p]}}=(L_{a})^{p},\ \forall a\in L.$

$\mathrm{(2)}$  $(\alpha a)^{[p]}=\alpha^{p}a^{[p]},\ \forall a\in L, \alpha\in \mathbb{F}.$

$\mathrm{(3)}$
$(a+b)^{[p]}=a^{[p]}+b^{[p]}+\sum\limits_{i=1}^{p-1}s_{i}(a,b),$\\
where $ (L(a\otimes X+b\otimes 1))^{p-1}(a\otimes
1)=\sum\limits_{i=1}^{p-1}is_{i}(a,b)\otimes X^{i-1} $ in
$L\otimes_{\mathbb{F}}\mathbb{F}[X], \forall a,b\in L.$ The pair
$(L,[p])$ is referred to as a restricted Leibniz algebra.
\end{definition}

Clearly, any restricted Lie algebra is a restricted Leibniz  algebra.
Let $L$ be a Leibniz algebra over $\mathbb{F}$ and $f:L\rightarrow L$ be a mapping. $f$ is called a $p$-semilinear mapping, if
 $f(\alpha x+y)=\alpha^{p}f(x)+f(y), \ \forall x,y \in L, \ \forall \alpha\in \mathbb{F}.$ Let $S$ be a subset of
Leibniz algebra $L.$ We put $Z_{L}(S)=\{x\in L|[x,S]=0\}$ and $C_{L}(S)=\{x\in L|[S,x]=0\}.$
$Z_{L}(S)$ and  $C_{L}(S)$ are called the right centralizer of $S$ in $L$ and the left centralizer of $S$ in $L$, respectively.
$Z(L)=\{x\in L|[x,L]=0\}$ is called the right center of $L;$ $C(L)=\{x\in L|[L,x]=0\}$ is called the left center of $L.$ Let $V$ be a subspace of
$L.$ We put $\mathrm{Nor}_{L}(V)=\{x\in L|[V,x]\subseteq V\}.$ $\mathrm{Nor}_{L}(V)$ is called the left normalizer of $V$ in $L.$
 \begin{proposition} \label{p1.3.0}  Let $L$ be a Leibniz algebra over $\mathbb{F}.$ Then the following states hold:

$\mathrm{(1)}$ $I=\langle [x^{[p]^{i}},x^{[p]^{j}}]|x\in L, i,j\in\mathbb{N}]\rangle$
is contained in $Z(L).$

$\mathrm{(2)}$ If $Z(L)=0,$ then $L$ is a Lie algebra.
\end{proposition}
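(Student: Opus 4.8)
The plan is to reduce everything to the behaviour of the left multiplication operators $L_a$, exploiting that $a\mapsto L_a$ is linear with $L_{[a,b]}=L_aL_b-L_bL_a$ (as recorded in the excerpt just before Definition~\ref{d1.1.18}) and that $Z(L)=\{y\in L\mid [y,L]=0\}$ is precisely the kernel of this map.

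For part (1), I would first prove by induction on $i$ that $L_{x^{[p]^i}}=(L_x)^{p^i}$ for all $x\in L$ and all $i\in\mathbb{N}$: the case $i=0$ is trivial, and the inductive step is immediate from axiom (1) of Definition~\ref{d1.1.18}, since $L_{x^{[p]^{i+1}}}=L_{(x^{[p]^i})^{[p]}}=(L_{x^{[p]^i}})^{p}=((L_x)^{p^i})^{p}=(L_x)^{p^{i+1}}$. Now fix $x$ and $i,j$. Both $(L_x)^{p^i}$ and $(L_x)^{p^j}$ are polynomials in the single endomorphism $L_x$, so they commute; consequently
$$L_{[x^{[p]^i},x^{[p]^j}]}=L_{x^{[p]^i}}L_{x^{[p]^j}}-L_{x^{[p]^j}}L_{x^{[p]^i}}=(L_x)^{p^i}(L_x)^{p^j}-(L_x)^{p^j}(L_x)^{p^i}=0.$$
This says exactly that each generator $[x^{[p]^i},x^{[p]^j}]$ lies in $Z(L)$. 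Since $y\mapsto L_y$ is linear, $Z(L)$ is a subspace, and one checks from the Leibniz identity that it is in fact an ideal, so the ideal $I$ generated by these elements is contained in $Z(L)$.

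For part (2), I would specialize to $i=j=0$: the generator $[x^{[p]^0},x^{[p]^0}]=[x,x]$ lies in $I\subseteq Z(L)$; equivalently, one sees directly from the Leibniz identity that $[[x,x],z]=[x,[x,z]]-[x,[x,z]]=0$ for all $z$, so $[x,x]\in Z(L)$. If $Z(L)=0$, this forces $[x,x]=0$ for every $x\in L$, i.e. the bracket is alternating. A (left) Leibniz algebra all of whose squares vanish is a Lie algebra: the alternating condition yields skew-symmetry, and the Leibniz identity $[[x,y],z]=[x,[y,z]]-[y,[x,z]]$ then coincides with the Jacobi identity. Hence $L$ is a Lie algebra.

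I expect no serious obstacle here; the only points demanding care are the bookkeeping of left-versus-right conventions---making sure that $Z(L)$ as defined is the kernel of the left multiplication map, so that $L_{[a,b]}=0$ genuinely places $[a,b]$ in $Z(L)$---and establishing the composition law $L_{x^{[p]^i}}=(L_x)^{p^i}$ cleanly by induction rather than taking it for granted.
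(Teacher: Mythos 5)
Your proposal is correct and takes essentially the same route as the paper: the paper's displayed computation of $[[x^{[p]^{i}},x^{[p]^{j}}],y]$ via the Leibniz identity, rewritten as iterated left multiplications by $x$, is exactly your operator argument combining $L_{[a,b]}=L_{a}L_{b}-L_{b}L_{a}$ with $L_{x^{[p]^{i}}}=(L_{x})^{p^{i}}$ and the commutation of powers of $L_{x}$, and part (2) is the identical specialization $[x,x]\in Z(L)$. Your additional checks --- that $Z(L)$ is an ideal and that an alternating bracket plus the Leibniz identity yields the Jacobi identity --- merely make explicit steps the paper leaves implicit.
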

\begin{proof} (1)
For $i,j\in \mathbb{N},$ then
\begin{eqnarray*}&&[[x^{[p]^{i}}, x^{[p]^{j}}], y]\\
&&=[x^{[p]^{i}}, [x^{[p]^{j}}, y]]-[x^{[p]^{j}},[x^{[p]^{i}}, y]]\\
&&=\underbrace{[x,\cdots[x}_{p^{i}}\underbrace{[x\cdots[x}_{p^{j}},y]\cdots ]]\cdots]-\underbrace{[x, \cdots[x}_{p^{j}} \underbrace{[x\cdots [x}_{p^{i}},y]\cdots]]\cdots]\\
&&=0.
\end{eqnarray*}
We have $[x^{[p]^{i}},x^{[p]^{j}}]\in Z(L).$ Consequently, $I=\langle [x^{[p]^{i}},x^{[p]^{j}}]|x\in L, i,j\in\mathbb{N}]\rangle \subseteq Z(L).$

(2) By (1), $[x,x]\in Z(L).$  If $Z(L)=0,$ then  $[x,x]=0.$ Hence $L$ is a Lie algebra.
\end{proof}
\begin{definition} Let $(L,[p])$ be a restricted Leibniz algebra over $\mathbb{F}.$ A subalgebra (ideal or left ideal) $H$
 of $L$ is called a $p$-subalgebra ($p$-ideal or $p$-left ideal) of $L,$
if $x^{[p]}\in H, \forall x\in H.$
\end{definition}
\begin{proposition} \label{p1.3.1} Let $L$ be a subalgebra of a restricted Leibniz
algebra $(G,[p])$ and $[p]_{1}: L\rightarrow L$ a mapping. Then the following
statements are equivalent:

$\mathrm{(1)}$ $[p]_{1}$ is a $p$-mapping on $L.$

$\mathrm{(2)}$ There exists a $p$-semilinear mapping $f:L\rightarrow Z_{G}(L)$
such that $[p]_{1}=[p]+f.$
\end{proposition}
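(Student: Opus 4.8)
The plan is to establish the equivalence by working with the difference $f := [p]_1 - [p]|_L$ in the forward direction and by verifying the three axioms of Definition \ref{d1.1.18} in the backward direction. The guiding observation throughout is that the correction terms $s_i(a,b)$ occurring in axiom (3) are constructed purely from the left-multiplication operators, hence depend only on the Leibniz bracket. Since $L$ is a subalgebra of $G$, it inherits exactly the bracket of $G$, so for $a,b\in L$ the element $s_i(a,b)$ is literally the same whether it is formed for $[p]$ or for $[p]_1$. I expect pinning down this invariance (together with the observation that $L$ is invariant under each $L_a$, so that $(L_a)^p|_L=(L_a|_L)^p$) to be the one genuinely load-bearing point; once it is in place, both implications split into short, independent verifications.

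For $\mathrm{(1)}\Rightarrow\mathrm{(2)}$, I would define $f:=[p]_1-[p]|_L$, so that $[p]_1=[p]+f$ holds by construction, and then check that $f$ is $p$-semilinear with values in $Z_G(L)$. To see $f(a)\in Z_G(L)$, I use linearity of the adjoint assignment $a\mapsto L_a$ to write $L_{f(a)}=L_{a^{[p]_1}}-L_{a^{[p]}}$; restricting to $L$ and applying axiom (1) for $[p]_1$ on $L$ and axiom (1) for $[p]$ in $G$ (invoking $L_a$-invariance of $L$ to pass from $(L_a)^p|_L$ to $(L_a|_L)^p$), both operators equal $(L_a|_L)^p$, whence $L_{f(a)}|_L=0$, i.e. $[f(a),L]=0$. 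For $p$-semilinearity I expand $f(\alpha a+b)$ using axioms (2) and (3) for both $[p]_1$ and $[p]$; the terms $s_i(\alpha a,b)$ cancel upon subtraction by the observation above, leaving $f(\alpha a+b)=\alpha^{p}f(a)+f(b)$.

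For $\mathrm{(2)}\Rightarrow\mathrm{(1)}$, I assume $[p]_1=[p]+f$ with $f:L\to Z_G(L)$ $p$-semilinear and verify the three axioms for $[p]_1$. Axiom (1) follows from $L_{a^{[p]_1}}|_L=L_{a^{[p]}}|_L+L_{f(a)}|_L=(L_a|_L)^{p}+0$, using axiom (1) for $[p]$, the $L_a$-invariance of $L$, and $f(a)\in Z_G(L)$. Axiom (2) follows by expanding $(\alpha a)^{[p]_1}=(\alpha a)^{[p]}+f(\alpha a)$ and combining axiom (2) for $[p]$ with $f(\alpha a)=\alpha^{p}f(a)$, the latter deduced from $p$-semilinearity after first recording $f(0)=0$. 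Axiom (3) follows by expanding $(a+b)^{[p]_1}=(a+b)^{[p]}+f(a+b)$, applying axiom (3) for $[p]$ and the additivity $f(a+b)=f(a)+f(b)$, and regrouping, the shared terms $s_i(a,b)$ reassembling correctly.

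The main obstacle, as flagged, is not any single computation but the careful bookkeeping of where operators act, namely distinguishing $L_a$ on $G$ from its restriction to $L$, and supplying a clean justification that $s_i(a,b)$ is unaffected by the passage between $G$ and its subalgebra $L$. With that invariance secured, each of the three axioms reduces to a one-line manipulation, and the equivalence follows.
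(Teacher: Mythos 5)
Your proposal is correct and follows essentially the same route as the paper: both directions hinge on the difference $f=[p]_1-[p]|_L$, membership in $Z_G(L)$ via vanishing of $L_{f(a)}$ on $L$, and cancellation of the $s_i(a,b)$ terms (which depend only on the bracket, hence agree for $L\subseteq G$) to get $p$-semilinearity. Your write-up is in fact slightly more thorough than the paper's, which in the direction $(2)\Rightarrow(1)$ verifies only axiom $(3)$ explicitly and makes the $L_a$-invariance bookkeeping and the $s_i$-invariance tacit.
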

\begin{proof}
(1)$\Rightarrow $(2). Consider $f:L\rightarrow G,$ $f(x)=x^{[p]_{1}}-x^{[p]}.$
Since $L_{f(x)}(y)=0, \forall x,y\in L,$ $f$ actually maps $L$ into $Z_{G}(L).$
For $x,y\in L, \alpha\in \mathbb{F},$ we obtain
\begin{eqnarray*}&&f(\alpha x+y)\\
&&=\alpha^{p}x^{[p]_{1}}+y^{[p]_{1}}+\sum_{i=1}^{p-1}s_{i}(\alpha x,y)-\alpha^{p}x^{[p]}-y^{[p]}-\sum_{i=1}^{p-1}s_{i}(\alpha x,y)\\
&&=\alpha^{p}f(x)+f(y),
\end{eqnarray*}
which proves that $f$ is $p$-semilinear.

(2)$\Rightarrow $(1). We only check the property pertaining to the sum of
two elements  $x,y\in L,$
\begin{eqnarray*}&&
(x+y)^{[p]_{1}}\\
&&=(x+y)^{[p]}+f(x+y)\\
&&=x^{[p]}+f(x)+y^{[p]}+f(y)+\sum_{i=1}^{p-1}s_{i}(x,y)\\
&&=x^{[p]_{1}}+y^{[p]_{1}}+\sum_{i=1}^{p-1}s_{i}(x,y).
\end{eqnarray*}The proof is
complete.
\end{proof}

\begin{corollary} \label{c1.3.3} The following statements hold.

$\mathrm{(1)}$ If $Z(L)=0,$ then $L$ admits at most one $p$-mapping.

$\mathrm{(2)}$ If two $p$-mappings coincide on a basis, then they are equal.

$\mathrm{(3)}$ If $(L,[p])$ is restricted, there exists a $p$-mapping $[p]^{'}$ of $L$ such that
$x^{[p]^{'}}=0, \ \forall x\in Z(L).$
\end{corollary}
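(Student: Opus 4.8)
All three parts descend from Proposition~\ref{p1.3.1} applied in the special case $G=L$. When $(L,[p])$ is restricted and we view $L$ as a subalgebra of itself, we have $Z_{G}(L)=Z_{L}(L)=Z(L)$, so the proposition says precisely that the $p$-mappings on $L$ are exactly the maps $[p]+f$, where $f:L\rightarrow Z(L)$ runs over all $p$-semilinear mappings. Comparing two $p$-mappings therefore reduces to understanding a single $p$-semilinear map into the center. The elementary fact I will invoke repeatedly is that a $p$-semilinear map is pinned down by its values on a basis: from $f(\alpha x+y)=\alpha^{p}f(x)+f(y)$ an easy induction gives $f\left(\sum_{i}\alpha_{i}e_{i}\right)=\sum_{i}\alpha_{i}^{p}f(e_{i})$, so that $f$ vanishing on a basis forces $f=0$.

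For $\mathrm{(1)}$, suppose $Z(L)=0$ and let $[p]_{1},[p]_{2}$ be two $p$-mappings; regarding $(L,[p]_{1})$ as the ambient restricted algebra, Proposition~\ref{p1.3.1} writes $[p]_{2}=[p]_{1}+f$ with $f:L\rightarrow Z(L)=0$, whence $f=0$ and the two mappings coincide. For $\mathrm{(2)}$, with the same setup the difference $f=[p]_{2}-[p]_{1}$ is $p$-semilinear into $Z(L)$ and vanishes on a basis by hypothesis, so the fact recorded above gives $f=0$ and hence $[p]_{1}=[p]_{2}$.

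Part $\mathrm{(3)}$ is the substantive one. First I would record two facts about the restriction of $[p]$ to $Z(L)$. Since every $x\in Z(L)$ satisfies $L_{x}=0$, axiom $\mathrm{(1)}$ of the $p$-mapping gives $L_{x^{[p]}}=(L_{x})^{p}=0$, so $x^{[p]}\in Z(L)$ and $[p]$ restricts to a self-map of $Z(L)$. Moreover this restriction is $p$-semilinear: for $x,y\in Z(L)$ the operator $L(\alpha x\otimes X+y\otimes 1)$ on $L\otimes_{\mathbb{F}}\mathbb{F}[X]$ is bracketing with a central element, hence identically zero, so every correction term $s_{i}(\alpha x,y)$ vanishes and $(\alpha x+y)^{[p]}=\alpha^{p}x^{[p]}+y^{[p]}$. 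I would then pick a basis $\{z_{1},\dots,z_{m}\}$ of $Z(L)$, extend it to a basis of $L$, and define $f$ on this basis by $f(z_{k})=-z_{k}^{[p]}$ and $f=0$ on the complementary vectors, extending $p$-semilinearly to $f:L\rightarrow Z(L)$. Using $p$-semilinearity of both $f$ and $[p]|_{Z(L)}$ one checks $f(x)=-x^{[p]}$ for every $x\in Z(L)$, not merely on the basis. Finally Proposition~\ref{p1.3.1} guarantees $[p]':=[p]+f$ is a $p$-mapping, and $x^{[p]'}=x^{[p]}+f(x)=0$ for all $x\in Z(L)$.

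The main obstacle is the $p$-semilinearity of $[p]$ on $Z(L)$, that is, the vanishing of the $s_{i}(\alpha x,y)$: here I must unwind the definition of the $s_{i}$ and use that left multiplication by a central element annihilates all of $L\otimes_{\mathbb{F}}\mathbb{F}[X]$, together with the invertibility of $i$ modulo $p$ for $1\le i\le p-1$ to strip the coefficients. Once that is in hand, the remainder is the routine bookkeeping of extending a $p$-semilinear map off a basis and confirming that the extension agrees with $-[p]$ on the whole of $Z(L)$.
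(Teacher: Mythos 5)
Your proposal is correct and follows essentially the same route as the paper: all three parts are reduced to Proposition~\ref{p1.3.1} with $G=L$ (so $Z_{G}(L)=Z(L)$), part $\mathrm{(2)}$ uses that a $p$-semilinear difference vanishing on a basis is zero, and part $\mathrm{(3)}$ extends the $p$-semilinear restriction $[p]|_{Z(L)}$ (your $f$ is just its negative, so $[p]+f$ versus the paper's $[p]-f$) to all of $L$. Your write-up merely makes explicit some steps the paper leaves implicit, such as $x^{[p]}\in Z(L)$ for central $x$ and the vanishing of the $s_{i}$ via invertibility of $i$ modulo $p$.
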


\begin{proof}
(1) We set $G=L.$ Then $Z_{G}(L)=Z(L),$ the only $p$-semilinear mapping
occurring in Proposition \ref{p1.3.1} is the zero mapping.

(2) If two $p$-mappings coincide on a basis, their difference vanishes since it is $p$-semilinear.

(3) $[p]|_{Z(L)}$ defines a $p$-mapping on $Z(L).$ Since $Z(L)$ is abelian, it is $p$-semilinear.
Extend this to a $p$-semilinear mapping $f: L\rightarrow Z(L).$ Then $[p]^{'}:=[p]-f$ is a $p$-mapping
of $L,$ vanishing on $Z(L).$
\end{proof}

In the special case of $G=U(L)^{-}\supset L,$ where $U(L)$ is the universal enveloping algebra of $L$ (see \cite{lp}), we obtain

\begin{theorem} \label{t1.3.4} Let $(e_{j})_{j\in J}$ be a basis of $L$
such that there are $y_{j}\in L$ with $(L_{e_{j}})^{p}=L_{y_{j}}.$ Then there exists exactly
one $p$-mapping $[p]:L\rightarrow L$ such that $e_{j}^{[p]}=y_{j},\forall j\in J.$
\end{theorem}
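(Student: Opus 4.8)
The plan is to follow the standard restricted-Lie-algebra strategy adapted to the Leibniz setting, exploiting Proposition \ref{p1.3.1} to reduce existence and uniqueness to the behavior on a basis. First I would establish uniqueness, which is the easy half: suppose $[p]$ and $[p]'$ are two $p$-mappings agreeing on the basis $(e_j)_{j\in J}$, i.e. $e_j^{[p]}=y_j=e_j^{[p]'}$ for all $j$. By Corollary \ref{c1.3.3}(2), since two $p$-mappings coinciding on a basis have a $p$-semilinear difference that vanishes on the basis, the difference vanishes identically, so $[p]=[p]'$. Hence at most one such $p$-mapping exists.

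For existence, the plan is to work inside $G=U(L)^-$, where $U(L)$ is the universal enveloping algebra containing $L$ as a subalgebra, so that $G$ carries its own canonical $p$-mapping $a\mapsto a^{[p]}=a^p$ (the $p$-th associative power), making $(G,[p])$ restricted. The key observation is that by hypothesis $(L_{e_j})^p=L_{y_j}$ with $y_j\in L$, which says precisely that the associative $p$-th power $e_j^p\in G$ has the same left-multiplication operator as $y_j\in L$; equivalently $e_j^p-y_j\in Z_G(L)$. I would then define a candidate mapping on $L$ by prescribing its values on the basis as $e_j\mapsto y_j$ and extending via the $p$-semilinear recipe so that the difference from the ambient $[p]$ of $G$ lands in $Z_G(L)$.

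Concretely, the plan is to define $f:L\to Z_G(L)$ to be the unique $p$-semilinear mapping determined on the basis by $f(e_j):=e_j^p-y_j=e_j^{[p]}-y_j$, where $[p]$ here denotes the restricted structure of $G$. A $p$-semilinear map is freely determined by arbitrary values on a basis because the defining relation $f(\alpha x+y)=\alpha^p f(x)+f(y)$ is exactly additivity twisted by the Frobenius on scalars, so such an extension exists and is unique; one checks the image stays in the $\mathbb{F}$-subspace generated by the $f(e_j)\in Z_G(L)$, and $Z_G(L)$ is an $\mathbb{F}$-subspace. By Proposition \ref{p1.3.1}, the mapping $[p]_1:=[p]+f$ restricted to $L$ is then a genuine $p$-mapping on $L$, and by construction $e_j^{[p]_1}=e_j^{[p]}+f(e_j)=e_j^p+(y_j-e_j^p)=y_j$. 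It remains to confirm that $[p]_1$ actually maps $L$ into $L$ rather than merely into $G$: this follows because on the basis it takes the values $y_j\in L$, and for a general element the $p$-semilinear defining formula together with condition (3) expresses $x^{[p]_1}$ in terms of the $y_j$ and the correction terms $s_i$, all of which lie in $L$.

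The main obstacle I anticipate is verifying closure, namely that $[p]_1$ lands in $L$: while its values on basis vectors are manifestly in $L$, the $p$-mapping is only $p$-semilinear, not linear, so one must check that the cross terms $\sum_{i=1}^{p-1}s_i(a,b)$ appearing in axiom (3)—which are defined through the expansion of $(L(a\otimes X+b\otimes 1))^{p-1}(a\otimes 1)$ in $L\otimes_{\mathbb{F}}\mathbb{F}[X]$—genuinely lie in $L$. Since these $s_i(a,b)$ are by definition elements of $L$ (they are the coefficients of an element of $L\otimes_{\mathbb{F}}\mathbb{F}[X]$), the formula $(a+b)^{[p]_1}=a^{[p]_1}+b^{[p]_1}+\sum_{i=1}^{p-1}s_i(a,b)$ stays within $L$ once the values on a basis do, and an induction on the number of basis vectors in the support of a general element closes the argument. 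This is the step requiring the most care, but it reduces to bookkeeping once the ambient restricted structure on $U(L)^-$ is in place.
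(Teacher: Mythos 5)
Your proposal is correct and follows essentially the same route as the paper: uniqueness via Corollary \ref{c1.3.3}(2), existence by working in $G=U(L)^{-}$ with its associative $p$-th power, noting $e_j^{p}-y_j\in Z_{G}(L)$, extending $p$-semilinearly from the basis, checking closure of $x\mapsto x^{p}+f(x)$ in $L$ (your induction on supports is the paper's subspace $V=\{x\in L\mid x^{p}+f(x)\in L\}$ in different clothing), and invoking Proposition \ref{p1.3.1}. The only blemish is a sign slip: you first set $f(e_j):=e_j^{p}-y_j$ but your later computation (and the paper) requires $f(e_j)=y_j-e_j^{p}$.
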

\begin{proof}
For $z\in L,$ we have
$0=((L_{e_{j}})^{p}-L_{y_{j}})(z)=[e^{p}_{j}-y_{j},z].$ Then
$e^{p}_{j}-y_{j}\in Z_{U(L)}{(L)}, \forall j\in J.$ We define a
$p$-semilinear mapping $f:L\rightarrow Z_{U(L)}{(L)}$ by means of
$$f(\sum \alpha_{j}e_{j}):=\sum\alpha_{j}^{p} (y_{j}-e^{p}_{j}).$$
Consider $V:=\{x\in L| x^{p}+f(x)\in L\}.$ The equation
$$(\alpha x+y)^{p}+f(\alpha x+y)=\alpha^{p}x^{p}+y^{p}+\sum^{p-1}_{i=1}s_{i}(\alpha x,y)+\alpha^{p}f(x)+f(y)$$
ensures that $V$ is a subspace of $L.$ Since it contains the basis
$(e_{j})_{j\in J},$ we conclude that $x^{p}+f(x)\in L,\ \forall x\in L.$
By virtue of Proposition \ref{p1.3.1},  $[p]:L\rightarrow L, x^{[p]}:=x^{p}+f(x)$ is a
$p$-mapping on $L.$ In addition, we obtain
$e^{[p]}_{j}=e^{p}_{j}+f(e_{j})=y_{j},$ as asserted. The
uniqueness of $[p]$ follows from Corollary \ref{c1.3.3}.
\end{proof}

\begin{definition} \label{d1.3.400} A Leibniz algebra $L$ is called restrictable, if $L_{L}$ is a $p$-subalgebra of
$\mathrm{Der}(L),$ that is, $(L_{x})^{p}\in L_{L}, \forall x\in L,$ where $L_{L}=\{L_{x}|x\in L\},\ \mathrm{Der}(L)=\{D\in \mathrm{gl}(L)|\ D[x,y]=[D(x),y]+[x,D(y)], \forall x,y\in L\}.$
\end{definition}
\begin{theorem} \label{t1.3.400} $L$ is a restrictable Leibniz algebra if and only if there is a $p$-mapping $[p]:L\rightarrow L$
which makes $L$ a  restricted Leibniz algebra.
\end{theorem}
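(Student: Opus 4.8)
The plan is to prove both directions of the equivalence by reducing the statement to the structural results already established, in particular Theorem~\ref{t1.3.4} and the defining relation $(1)$ of a $p$-mapping.

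First, for the easy direction, suppose $L$ carries a $p$-mapping $[p]:L\rightarrow L$ making it a restricted Leibniz algebra. By condition~$(1)$ of Definition~\ref{d1.1.18} we have $L_{x^{[p]}}=(L_x)^p$ for every $x\in L$. Since $x^{[p]}\in L$, this exhibits $(L_x)^p$ as an element of $L_L=\{L_a\mid a\in L\}$. To conclude that $L$ is restrictable in the sense of Definition~\ref{d1.3.400}, I would first note that each $L_x$ is a derivation of $L$: this is exactly the Leibniz identity $[x,[y,z]]=[[x,y],z]+[y,[x,z]]$ rewritten as $L_x[y,z]=[L_x(y),z]+[y,L_x(z)]$, so $L_L\subseteq\mathrm{Der}(L)$. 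Then $(L_x)^p=L_{x^{[p]}}\in L_L$ shows $L_L$ is closed under the $p$-power operation inside $\mathrm{Der}(L)$, which is precisely the condition that $L_L$ be a $p$-subalgebra of $\mathrm{Der}(L)$. Hence $L$ is restrictable.

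For the converse, assume $L$ is restrictable, so that $(L_x)^p\in L_L$ for all $x$. I would like to invoke Theorem~\ref{t1.3.4}, which produces a unique $p$-mapping once we have a basis $(e_j)_{j\in J}$ with $(L_{e_j})^p=L_{y_j}$ for suitable $y_j\in L$. The restrictability hypothesis furnishes, for each basis vector $e_j$, an element $y_j\in L$ with $(L_{e_j})^p=L_{y_j}$, so the hypotheses of Theorem~\ref{t1.3.4} are met verbatim. Applying that theorem yields a $p$-mapping $[p]:L\rightarrow L$ with $e_j^{[p]}=y_j$, and by construction this makes $(L,[p])$ a restricted Leibniz algebra.

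The only genuine subtlety I anticipate is verifying that the map $L_L\to\mathrm{Der}(L)$ interacts correctly with the $p$-power, i.e.\ that the $p$-th power of a derivation is again a derivation, so that the phrase ``$p$-subalgebra of $\mathrm{Der}(L)$'' is meaningful; this is a standard Leibniz-rule computation in characteristic $p$ relying on the vanishing of the interior binomial coefficients $\binom{p}{i}$, and it guarantees $(L_x)^p\in\mathrm{Der}(L)$ so that $L_L$ sits inside $\mathrm{Der}(L)$ as claimed. Beyond that, the argument is essentially a bookkeeping translation between Definition~\ref{d1.3.400} and condition~$(1)$ of Definition~\ref{d1.1.18}, with Theorem~\ref{t1.3.4} doing the real work of constructing the $p$-mapping in the converse direction; I would therefore keep the write-up short and point explicitly to Theorem~\ref{t1.3.4} rather than re-deriving the existence of $[p]$.
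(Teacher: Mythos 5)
Your proposal is correct and follows essentially the same route as the paper: the forward direction reads restrictability directly off condition (1) of Definition \ref{d1.1.18}, and the converse picks a basis $(e_j)_{j\in J}$, uses restrictability to produce elements $y_j$ with $(L_{e_j})^p=L_{y_j}$, and invokes Theorem \ref{t1.3.4} to obtain the (unique) $p$-mapping. Your added remarks — that $L_x\in\mathrm{Der}(L)$ is just the Leibniz identity rewritten, and that $p$-th powers of derivations are derivations in characteristic $p$ — are correct standard checks that the paper leaves implicit, not a deviation in method.
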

\begin{proof} $(\Leftarrow)$ By the definition of $p$-mapping $[p],$
we have $(L_{x})^{p}=L_{x^{[p]}}\in L_{g}, \forall x\in L.$ Hence $L$ is restrictable.

$(\Rightarrow)$ Let $L$ be restrictable. Then for $x\in L,$  we have $(L_{x})^{p}\in L_{L},$
that is, there exists $y\in L$  such that $(L_{x})^{p}=L_{y}.$
Let $(e_{j})_{j\in J}$ be a basis of $L.$ Then there exist $y_{j}\in L$ such that $(L_{e_{j}})^{p}=L_{y_{j}}(j\in J).$
By Theorem  $\ref{t1.3.4},$  then there exists exactly one $p$-mapping $[p]: L\rightarrow L$ such that
$e_{j}^{[p]}=y_{j}, \forall j\in J,$ which makes $L$ a restricted  Leibniz algebra.
\end{proof}

\begin{definition}{\rm\cite{das}} \label{d1.1.180} A Leibniz algebra $L$ over $\mathbb{F}$ is called restricted, if for any $x\in L,$
there exists some $x^{[p]}\in L$ such that
$(L_{x})^{p}=L_{x^{[p]}}.$
\end{definition}

\begin{theorem} \label{t1.3.4000}
Definition \ref{d1.1.180} is equivalent to Definition \ref{d1.1.18}.
\end{theorem}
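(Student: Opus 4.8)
The plan is to establish the two implications separately, routing the harder direction through the notion of restrictability from Definition \ref{d1.3.400} and the already-proved Theorem \ref{t1.3.400}.

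First I would dispose of the implication Definition \ref{d1.1.18} $\Rightarrow$ Definition \ref{d1.1.180}, which is immediate. If $(L,[p])$ is a restricted Leibniz algebra in the sense of Definition \ref{d1.1.18}, then the mapping $[p]$ satisfies condition (1), namely $(L_x)^p = L_{x^{[p]}}$ with $x^{[p]} \in L$ for every $x \in L$. This is verbatim the requirement of Definition \ref{d1.1.180}, so $L$ is restricted in that sense and no further work is needed here.

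The substantive direction is Definition \ref{d1.1.180} $\Rightarrow$ Definition \ref{d1.1.18}. The key observation is that the hypothesis of Definition \ref{d1.1.180}---for each $x \in L$ there exists $x^{[p]} \in L$ with $(L_x)^p = L_{x^{[p]}}$---says exactly that $(L_x)^p \in L_L$ for all $x$. Each $L_x$ is a derivation of $L$ (a direct consequence of the Leibniz identity, since $L_x[y,z] = [L_x y, z] + [y, L_x z]$), and in characteristic $p$ the $p$-th power of a derivation is again a derivation, so $(L_x)^p \in \mathrm{Der}(L)$. The hypothesis then asserts precisely that $L_L$ is closed under $p$-th powers inside $\mathrm{Der}(L)$, that is, that $L_L$ is a $p$-subalgebra of $\mathrm{Der}(L)$. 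Thus Definition \ref{d1.1.180} holds if and only if $L$ is restrictable in the sense of Definition \ref{d1.3.400}.

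I would then invoke Theorem \ref{t1.3.400}, which states that $L$ is restrictable if and only if there is a $p$-mapping $[p]:L\rightarrow L$ making $L$ a restricted Leibniz algebra in the sense of Definition \ref{d1.1.18}. Combining this equivalence with the preceding observation yields Definition \ref{d1.1.180} $\Rightarrow$ Definition \ref{d1.1.18}, completing the proof. The real content is hidden in Theorem \ref{t1.3.400} and, beneath it, Theorem \ref{t1.3.4}: the mere pointwise existence of elements $x^{[p]}$ granted by Definition \ref{d1.1.180} does not by itself produce a map obeying the $p$-semilinearity conditions (2) and (3), and I expect this compatibility to be the main obstacle were one to argue directly. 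Routing through restrictability lets the basis construction of Theorem \ref{t1.3.4}---fixing $e_j^{[p]} = y_j$ on a basis and extending via the subspace $V = \{x \in L : x^p + f(x) \in L\}$---assemble the individual choices into a genuine $p$-mapping.
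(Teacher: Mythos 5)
Your proposal is correct and follows essentially the same route as the paper: the easy direction is immediate from condition (1) of Definition \ref{d1.1.18}, and the substantive direction identifies the hypothesis of Definition \ref{d1.1.180} with restrictability in the sense of Definition \ref{d1.3.400} and then invokes Theorem \ref{t1.3.400} (resting on the basis construction of Theorem \ref{t1.3.4}). Your added remarks---that $L_x$ is a derivation by the Leibniz identity and that pointwise existence of $x^{[p]}$ does not by itself yield conditions (2) and (3)---correctly pinpoint why the detour through restrictability is where the real work lives, which the paper leaves implicit.
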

\begin{proof} If $[p]$ satisfies  ${L_{x}}^{p}=L_{x^{[p]}}, \forall x\in L.$ By Definition \ref{d1.3.400}, $L$ is restrictable.
 By Theorem  \ref{t1.3.400}, $L$ satisfies Definition \ref{d1.1.18}.
 Conversely, it is clear. Hence Definition \ref{d1.1.180} is equivalent to Definition \ref{d1.1.18}.
\end{proof}
\begin{remark}
Definition \ref{d1.1.180} is by far more tractable than Definition \ref{d1.1.18}, but just for convenient use it, we give the Definition \ref{d1.1.18}.
\end{remark}

\section{Properties of $p$-mappings and restrictable Leibniz algebras}

One advantage in considering restrictable Leibniz algebras instead of restricted ones rests on the following theorem.
\begin{theorem}  \label{t1.3.6} Let $f:L_{1}\rightarrow L_{2}$ be a surjective homomorphism of Leibniz algebra.
If $L_{1}$ is restrictable, so is $L_{2}.$
\end{theorem}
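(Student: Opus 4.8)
The plan is to verify the defining condition of restrictability directly for $L_2$: for each $y\in L_2$ I must produce $z\in L_2$ with $(L_y)^p=L_z$, so that $(L_y)^p\in L_{L_2}$. The whole argument hinges on the elementary observation that a Leibniz homomorphism intertwines left multiplications. Indeed, for any $x,w\in L_1$ one has $f(L_x(w))=f([x,w])=[f(x),f(w)]=L_{f(x)}(f(w))$, so that $f\circ L_x=L_{f(x)}\circ f$ as linear maps $L_1\rightarrow L_2$. Iterating this identity $p$ times yields $f\circ(L_x)^p=(L_{f(x)})^p\circ f$, and this is the key relation I would record first.

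Next I would fix $y\in L_2$ and use surjectivity of $f$ to choose $x\in L_1$ with $f(x)=y$. Since $L_1$ is restrictable, Definition \ref{d1.3.400} supplies an element $x'\in L_1$ with $(L_x)^p=L_{x'}$ in $\mathrm{Der}(L_1)$. Feeding this into the intertwining relation above, together with $f\circ L_{x'}=L_{f(x')}\circ f$, I obtain the chain
$$(L_y)^p\circ f=(L_{f(x)})^p\circ f=f\circ(L_x)^p=f\circ L_{x'}=L_{f(x')}\circ f.$$

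It then remains to cancel the $f$ appearing on the right of both sides, and this is precisely where surjectivity is used a second time, which is the only point requiring any care: given an arbitrary $w_2\in L_2$, pick $w_1\in L_1$ with $f(w_1)=w_2$; then $(L_y)^p(w_2)=(L_y)^p(f(w_1))=L_{f(x')}(f(w_1))=L_{f(x')}(w_2)$. As $w_2$ was arbitrary, this gives $(L_y)^p=L_{f(x')}$ as operators on $L_2$, so setting $z:=f(x')\in L_2$ shows $(L_y)^p=L_z\in L_{L_2}$; since $y$ was arbitrary, $L_2$ is restrictable. I do not anticipate a serious obstacle here: all the content sits in the commutation $f\circ L_x=L_{f(x)}\circ f$, and the only things to watch are keeping track of the ambient algebra of each operator $L_{(-)}$ and invoking surjectivity exactly once more at the end to pass from an equality of maps precomposed with $f$ to an equality of the maps themselves.
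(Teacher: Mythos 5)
Your proposal is correct and follows essentially the same route as the paper: both arguments rest on the intertwining identity $f\circ (L_x)^p=(L_{f(x)})^p\circ f$ (which the paper writes out as a nested-bracket computation evaluated at $f(y)$), combined with restrictability of $L_1$ to replace $(L_x)^p$ by some $L_{x'}$, and surjectivity of $f$ to conclude $(L_{f(x)})^p=L_{f(x')}$ on all of $L_2$. Your operator-level phrasing, with surjectivity invoked explicitly a second time to cancel the precomposed $f$, is just a cleaner write-up of the identical argument.
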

\begin{proof} Since $f$ is a surjective mapping, one gets $L_{2}=f(L_{1}).$  Then $$(L_{f(x)})^{p}(f(y))=[f(x),\cdots[f(x),f(y)]\cdots ]
=f[x,\cdots[x,y]\cdots ]=f((L_{x})^{p}(y))$$ $$=f(L_{x^{[p]}}(y))=f[x^{[p]},y]=[f(x^{[p]}),f(y)]=L_{f(x^{[p]})}(f(y)), \forall x,y\in L_{1}.$$
Since $L_{1}$ is restrictable, we have $(L_{f(x)})^{p}=L_{f(x^{[p]})}\in L_{L_{2}}.$ Hence $L_{2}$ is restrictable.
\end{proof}

\begin{definition} Let $(L,[p])$ be a restricted Leibniz algebra. A derivation $D$ is called
a restricted derivation, if $D(a^{[p]})=(L_{a})^{p-1}(D(a)).$
\end{definition}

\begin{definition} \label{d1.3.6} Let $A$ be a Leibniz algebra and $B$ be a Lie algebra and
 $\varphi:A\rightarrow \mathrm{Der}(B)$ a homomorphism. On the vector space $A\oplus B,$
 define a multiplication by means of
 $$ [(a,b), (a^{'},b^{'})]:=([a,a^{'}],\varphi(a)(b^{'})-\varphi(a^{'})(b)+[b,b^{'}]).$$
 This algebra, which is denoted by $A\oplus_{\varphi} B,$ is called the semidirect product of $A$ and $B.$
\end{definition}
\begin{theorem} \label{t1.3.7000} Notions such as Definition \ref{d1.3.6}, then
 $A\oplus_{\varphi} B$ is a Leibniz algebra.
\end{theorem}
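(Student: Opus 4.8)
The plan is to verify directly that the bilinear bracket defined on $A\oplus B$ in Definition \ref{d1.3.6} satisfies the Leibniz identity
$$[[(a,b),(a',b')],(a'',b'')]=[(a,b),[(a',b'),(a'',b'')]]-[(a',b'),[(a,b),(a'',b'')]].$$
Since the bracket is defined componentwise, it suffices to check the identity separately in the $A$-component and in the $B$-component, using the three pieces of structure available: the Leibniz identity in $A$, the Lie (Jacobi plus antisymmetry) structure in $B$, and the fact that $\varphi$ is a homomorphism into $\mathrm{Der}(B)$.

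First I would treat the $A$-component. Expanding both sides, the $A$-coordinate of $[[(a,b),(a',b')],(a'',b'')]$ is simply $[[a,a'],a'']$, and the right-hand side contributes $[a,[a',a'']]-[a',[a,a'']]$; these agree precisely because $A$ is a Leibniz algebra, so this component is immediate. The real content lies in the $B$-component, which I would expand carefully. The $B$-coordinate of the left-hand side is
$$\varphi([a,a'])(b'')-\varphi(a'')(\varphi(a)(b')-\varphi(a')(b)+[b,b']).$$
The $B$-coordinate of the right-hand side, after substituting the inner brackets, collects several terms involving $\varphi(a)\varphi(a')$, $\varphi(a')\varphi(a)$, $\varphi(a'')$, and nested brackets in $B$.

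The key step will be to match these two expressions. I would first use that $\varphi$ is a homomorphism, so $\varphi([a,a'])=[\varphi(a),\varphi(a')]=\varphi(a)\varphi(a')-\varphi(a')\varphi(a)$, to rewrite the term $\varphi([a,a'])(b'')$ on the left. Then, for the terms in which a derivation $\varphi(a'')$ acts on a bracket $[b,b']$ or on $\varphi(a)(b')$, I would invoke the derivation property $\varphi(a'')[u,v]=[\varphi(a'')(u),v]+[u,\varphi(a'')(v)]$ to distribute the action across the bracket. Finally, the purely $B$-valued nested-bracket terms must cancel against one another, and here I would use the Jacobi identity together with antisymmetry in the Lie algebra $B$.

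The main obstacle I anticipate is purely bookkeeping: the $B$-component produces a moderately large number of terms of mixed type (operator-on-element, operator-on-operator, and bracket-on-bracket), and one must organize them so that the derivation property converts the $\varphi(a'')[b,b']$ and $\varphi(a'')\varphi(a)(b')$ contributions into exactly the shapes appearing on the right-hand side, after which the genuinely Lie-theoretic cancellations become visible. The safest strategy is to group the terms by how many factors of $\varphi$ they carry and by whether they end in a $B$-bracket, then cancel within each group. No single step is conceptually deep once $\varphi$ being a homomorphism, the derivation axiom, and the Jacobi identity are brought to bear; the difficulty is entirely in carrying out the expansion without sign or indexing errors.
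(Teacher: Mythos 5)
Your proposal takes essentially the same route as the paper: a direct componentwise expansion of the Leibniz identity on $A\oplus_{\varphi}B$, with the $A$-component disposed of by the Leibniz identity in $A$, and the $B$-component cancelled in groups using $\varphi([a,a'])=\varphi(a)\varphi(a')-\varphi(a')\varphi(a)$, the derivation property of each $\varphi(x)$, and the Jacobi identity with antisymmetry in $B$ --- precisely the ingredients in the paper's computation (the paper additionally verifies bilinearity of the bracket explicitly, which in your sketch is taken as given and is indeed routine). One transcription slip should be fixed before the expansion will close: the $B$-coordinate of $[[(a,b),(a',b')],(a'',b'')]$ is not only $\varphi([a,a'])(b'')-\varphi(a'')\bigl(\varphi(a)(b')-\varphi(a')(b)+[b,b']\bigr)$ but also contains the bracket term $\bigl[\varphi(a)(b')-\varphi(a')(b)+[b,b'],\,b''\bigr]$, whose pieces $[[b,b'],b'']$, $[\varphi(a)(b'),b'']$ and $-[\varphi(a')(b),b'']$ are exactly the partners needed for the Jacobi-identity and derivation-property cancellations you describe; relatedly, the term $\varphi(a'')\varphi(a)(b')$ is absorbed by the homomorphism identity (against $\varphi(a)\varphi(a'')(b')$ and $\varphi([a,a''])(b')$), not by the derivation property as your last paragraph suggests.
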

\begin{proof} Let $(a,b), (a^{'},b^{'}), (a^{''},b^{''})\in A\oplus_{\varphi} B,\; k, k^{'}\in \mathbb{F}.$ Then
\begin{eqnarray*}&&[k(a,b)+k^{'}(a^{'},b^{'}), (a^{''},b^{''})]\\
&&=[(ka+k^{'}a^{'},kb+k^{'}b^{'}), (a^{''},b^{''})]\\
&&=(k[a,a^{''}]+k^{'}[a^{'},a^{''}], \varphi(ka+k^{'}a^{'})(b^{''})-\varphi(a^{''})(kb+k^{'}b^{'})+k[b,b^{''}]+k^{'}[b^{'},b^{''}]).
\end{eqnarray*}On the other hand, one gets
\begin{eqnarray*}&&k[(a,b),(a^{''},b^{''})]+k^{'}[(a^{'},b^{'}),(a^{''},b^{''})]\\
&&=k([a,a^{''}], \varphi(a)(b^{''})-\varphi(a^{''})(b)+[b,b^{''}])+k^{'}([a^{'},a^{''}], \varphi(a^{'})(b^{''})-\varphi(a^{''})(b^{'})\\
&&\quad \quad+[b^{'},b^{''}])\\
&&=(k[a,a^{''}]+k^{'}[a^{'},a^{''}], k\varphi(a)(b^{''})-k\varphi(a^{''})(b)+k[b,b^{''}]+k^{'}\varphi(a^{'})(b^{''})-k^{'}\varphi(a^{''})(b^{'})\\
&&\quad \quad +k^{'}[b^{'},b^{''}])\\
&&=(k[a,a^{''}]+k^{'}[a^{'},a^{''}], \varphi(ka+k^{'}a^{'})(b^{''})-\varphi(a^{''})(kb+k^{'}b^{'})+k[b,b^{''}]+k^{'}[b^{'},b^{''}]).
\end{eqnarray*}Hence
$[k(a,b)+k^{'}(a^{'},b^{'}), (a^{''},b^{''})]=k[(a,b),(a^{''},b^{''})]+k^{'}[(a^{'},b^{'}),(a^{''},b^{''})].$

Note that $\varphi[a,a^{'}]=\varphi(a)\varphi(a^{'})-\varphi(a^{'})\varphi(a).$ Moreover, we have
\begin{eqnarray*}&&[[(a,b),(a^{'},b^{'})],(a^{''},b^{''})]-[(a,b),[(a^{'},b^{'}),(a^{''},b^{''})]]+[(a^{'},b^{'}),[(a,b),(a^{''},b^{''})]]\\
&&=[([a,a^{'}], \varphi(a)(b^{'})-\varphi(a^{'})(b)+[b,b^{'}]),(a^{''},b^{''})]-[(a,b), ([a^{'},a^{''}], \varphi(a^{'})(b^{''})\\&&  \quad-\varphi(a^{''})(b^{'})+[b^{'},b^{''}])]+[(a^{'},b^{'}), ([a,a^{''}], \varphi(a)(b^{''})-\varphi(a^{''})(b)+[b,b^{''}])] \\
&&=([[a,a^{'}],a^{''}],\varphi[a,a^{'}](b^{''})-\varphi(a^{''})(\varphi(a)(b^{'})-\varphi(a^{'})(b)+[b,b^{'}])+[\varphi(a)(b^{'}),b^{''}]
\\&&  \quad\quad \quad \quad \quad -[\varphi(a^{'})(b),b^{''}]
 +[[b,b^{'}],b^{''}])\\&&  \quad  -([a,[a^{'},a^{''}]],\varphi(a)(\varphi(a^{'})(b^{''})-\varphi(a^{''})(b^{'})+[b^{'},b^{''}])-\varphi[a^{'},a^{''}](b)+[b,\varphi(a^{'})(b^{''})]
\\&&  \quad\quad \quad \quad \quad  -[b,\varphi(a^{''})(b^{'})]+[b,[b^{'},b^{''}]])\\&&  \quad +([a^{'},[a,a^{''}]],\varphi(a^{'})(\varphi(a)(b^{''})-\varphi(a^{''})(b)+[b,b^{''}])-\varphi[a,a^{''}](b^{'})+[b^{'},\varphi(a)(b^{''})]\\&&  \quad \quad \quad \quad \quad
-[b^{'},\varphi(a^{''})(b)]+[b^{'},[b,b^{''}]])\\
&&=(0, \varphi[a,a^{'}](b^{''})-\varphi(a^{''})\varphi(a)(b^{'})+\varphi(a^{''})\varphi(a^{'})(b)-\varphi(a^{''})[b,b^{'}]+[\varphi(a)(b^{'}),b^{''}]\\&&   \quad-[\varphi(a^{'})(b),b^{''}]
+[[b,b^{'}],b^{''}]-\varphi(a)\varphi(a^{'})(b^{''})+\varphi(a)\varphi(a^{''})(b^{'})-\varphi(a)[b^{'},b^{''}]\\&&  \quad+\varphi[a^{'},a^{''}](b)-[b,\varphi(a^{'})(b^{''})]
+[b,\varphi(a^{''})(b^{'})]-[b,[b^{'},b^{''}]] +\varphi(a^{'})\varphi(a)(b^{''})\\&&  \quad-\varphi(a^{'})\varphi(a^{''})(b)+\varphi(a^{'})[b,b^{''}]-\varphi[a,a^{''}](b^{'})+[b^{'},\varphi(a)(b^{''})]
-[b^{'},\varphi(a^{''})(b)]\\&&  \quad+[b^{'},[b,b^{''}]])
\\&&=0.
\end{eqnarray*}
As a result, $A\oplus_{\varphi} B$ is a Leibniz algebra. The result follows.
\end{proof}
\begin{theorem} \label{t1.3.7} Let $(A,[p])$ be a restricted Leibniz algebra and $(B,[p])$ be a restricted Lie algebra. If  $\varphi:A\rightarrow \mathrm{Der}(B)$  be restricted  homomorphism such that $\varphi(x)$ is restricted for every $x\in A,$ then $A\oplus_{\varphi} B$ is restrictable.
\end{theorem}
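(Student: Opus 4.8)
The plan is to invoke Definition \ref{d1.3.400} together with Theorem \ref{t1.3.400}: it suffices to show that $(L_{(a,b)})^{p}\in L_{A\oplus_{\varphi}B}$ for every $(a,b)$. Reading the product of Definition \ref{d1.3.6} off the decomposition $A\oplus B$, the operator $L_{(a,b)}$ is block lower triangular; on $(a',b')$ its first component is $[a,a']$ and its second component is $\tau_{b}(a')+(\varphi(a)+\mathrm{ad}_{B}b)(b')$, where $\tau_{b}(a'):=-\varphi(a')(b)$ and $\mathrm{ad}_{B}b=L^{B}_{b}$. The key move is the splitting $L_{(a,b)}=L_{(a,0)}+L_{(0,b)}$, to which I would apply Jacobson's formula in the associative algebra $\mathrm{End}_{\mathbb{F}}(A\oplus B)$:
$$(L_{(a,0)}+L_{(0,b)})^{p}=(L_{(a,0)})^{p}+(L_{(0,b)})^{p}+\sum_{i=1}^{p-1}s_{i}(L_{(a,0)},L_{(0,b)}).$$

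First I would dispose of the two pure $p$-th powers. The operator $L_{(a,0)}$ is block \emph{diagonal}, acting as $L^{A}_{a}$ on $A$ and as $\varphi(a)$ on $B$; hence $(L_{(a,0)})^{p}$ is diagonal with entries $(L^{A}_{a})^{p}=L^{A}_{a^{[p]}}$ (because $A$ is restricted) and $\varphi(a)^{p}=\varphi(a^{[p]})$ (because $\varphi$ is a restricted homomorphism, the $p$-operation on $\mathrm{Der}(B)$ being the ordinary $p$-th power of an operator). Thus $(L_{(a,0)})^{p}=L_{(a^{[p]},0)}$. The operator $L_{(0,b)}$ is block lower triangular with zero upper-left block, lower-left block $\tau_{b}$ and lower-right block $\mathrm{ad}_{B}b$; since its upper-left block vanishes, only the top-degree term survives when it is raised to the $p$-th power, giving lower-left block $(\mathrm{ad}_{B}b)^{p-1}\circ\tau_{b}$ and lower-right block $(\mathrm{ad}_{B}b)^{p}=\mathrm{ad}_{B}(b^{[p]})$ (because $B$ is a restricted Lie algebra). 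As each $\varphi(a')$ is a restricted derivation, one has $(\mathrm{ad}_{B}b)^{p-1}(\varphi(a')(b))=\varphi(a')(b^{[p]})$, so the lower-left block is precisely $\tau_{b^{[p]}}$; therefore $(L_{(0,b)})^{p}=L_{(0,b^{[p]})}$.

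It remains to treat the mixed Jacobson terms. Each $s_{i}(L_{(a,0)},L_{(0,b)})$ is a sum of iterated commutators of $L_{(a,0)}$ and $L_{(0,b)}$, and since $L_{[\xi,\eta]}=L_{\xi}L_{\eta}-L_{\eta}L_{\xi}$ holds in any Leibniz algebra, the set $L_{A\oplus_{\varphi}B}$ is closed under the commutator bracket. Hence every such iterated commutator is again a left multiplication, so $\sum_{i=1}^{p-1}s_{i}(L_{(a,0)},L_{(0,b)})=L_{z}$ for some $z\in A\oplus_{\varphi}B$. Adding the three contributions and using linearity of $\xi\mapsto L_{\xi}$ yields $(L_{(a,b)})^{p}=L_{(a^{[p]},b^{[p]})+z}\in L_{A\oplus_{\varphi}B}$, so $A\oplus_{\varphi}B$ is restrictable; in fact $(a,b)\mapsto(a^{[p]},b^{[p]})+z$ is a candidate $p$-mapping.

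I expect the real obstacle to be the $p$-th power of $L_{(0,b)}$, and in particular the off-diagonal identity $(\mathrm{ad}_{B}b)^{p-1}(\varphi(a')(b))=\varphi(a')(b^{[p]})$: this is the single place where the assumption that every $\varphi(x)$ is a restricted derivation is indispensable, just as the restricted-homomorphism assumption is exactly what forces $\varphi(a)^{p}=\varphi(a^{[p]})$ on the diagonal. A point worth stating carefully is why only one term of the lower-left block power survives, which is a direct consequence of the vanishing of the upper-left block of $L_{(0,b)}$.
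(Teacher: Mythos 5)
Your proposal is correct, and its two core verifications are exactly the paper's: for $x=(a,0)$ the paper checks $(L_{x})^{p}-L_{x^{[p]}}$ vanishes on $A$ (restrictedness of $A$) and on $B$ (via $\varphi(a)^{p}=\varphi(a^{[p]})$, the restricted-homomorphism hypothesis), and for $x=(0,b)$ it checks vanishing on $B$ (restrictedness of the Lie algebra $B$) and on $A$ via the identity $(L_{b})^{p-1}(\varphi(y)(b))=\varphi(y)(b^{[p]})$, i.e.\ precisely your off-diagonal computation, resting on each $\varphi(y)$ being a restricted derivation. Where you genuinely diverge is in passing from these pure elements to an arbitrary $(a,b)$: the paper stops after the two verifications and invokes Theorem \ref{t1.3.4}, which says that prescribing $(L_{e_{j}})^{p}=L_{y_{j}}$ on a basis (here, a basis adapted to $A\oplus B$) already yields a unique $p$-mapping --- the Jacobson-type bookkeeping for sums is buried inside that theorem's proof via a $p$-semilinear map into $Z_{U(L)}(L)$. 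You instead run Jacobson's formula directly in $\mathrm{End}_{\mathbb{F}}(A\oplus B)$ on the splitting $L_{(a,b)}=L_{(a,0)}+L_{(0,b)}$, using that the $s_{i}$ are Lie polynomials and that $L_{[\xi,\eta]}=L_{\xi}L_{\eta}-L_{\eta}L_{\xi}$ makes $L_{A\oplus_{\varphi}B}$ a Lie subalgebra of $\mathrm{End}_{\mathbb{F}}(A\oplus B)$ closed under the mixed terms (note $s_{i}$ carries a factor $1/i$, harmless since $1\leq i\leq p-1$). Your route is self-contained --- it avoids the universal enveloping algebra behind Theorem \ref{t1.3.4} and produces an explicit left-multiplier $(a^{[p]},b^{[p]})+z$ for $(L_{(a,b)})^{p}$ --- while the paper's route is shorter and, through Corollary \ref{c1.3.3}, also delivers uniqueness of the resulting $p$-mapping for free. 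Your block-triangular observation that only $(\mathrm{ad}_{B}b)^{p-1}\circ\tau_{b}$ survives in the lower-left corner is sound, since the upper-left block of $L_{(0,b)}$ is zero.
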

\begin{proof} Let $x\in A.$ Then $(L_{x})^{p}-L_{x^{[p]}}|_{A}=0$ and $(L_{x})^{p}-L_{x^{[p]}}|_{B}=\varphi(x)^{p}-\varphi(x^{[p]})=0$ holds,
hence $(L_{x})^{p}\in L_{A\oplus_{\varphi} B},\ \forall x\in A.$ If $x\in B,$ then $(L_{x})^{p}-L_{x^{[p]^{'}}}|_{B}=0$
and for $y\in A,$ we obtain
$$((L_{x})^{p}-L_{x^{[p]^{'}}})(y)=-(L_{x})^{p-1}\circ \varphi(y)(x)+\varphi(y)(x^{[p]^{'}})=0,$$
hence $(L_{x})^{p}\in L_{A\oplus_{\varphi} B},\ \forall x\in B.$ Therefore, $A\oplus_{\varphi} B$ is restrictable by Theorem \ref{t1.3.4}.
\end{proof}

\begin{corollary} \label{c1.3.8} Let $A,B$ be ideals of a Leibniz algebra $L$ such that $L=A\oplus B.$
Then $L$ is restrictable if and only if $A,B$ are restrictable.
\end{corollary}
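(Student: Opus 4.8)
The plan is to exploit that $A$ and $B$ are complementary ideals, so that the mixed products vanish: from $L = A \oplus B$ with both summands ideals we get $[A,B] \subseteq A \cap B = 0$ and likewise $[B,A] = 0$. This orthogonality is what makes the two directions decouple. First I would observe that the projections $\pi_A \colon L \to A$ and $\pi_B \colon L \to B$ along the decomposition are surjective Leibniz homomorphisms: writing $x = a_1 + b_1$ and $y = a_2 + b_2$, the vanishing of the mixed brackets gives $[x,y] = [a_1,a_2] + [b_1,b_2]$, whence $\pi_A[x,y] = [a_1,a_2] = [\pi_A x, \pi_A y]$, and symmetrically for $\pi_B$.

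For the forward implication, assume $L$ is restrictable. Since $\pi_A$ and $\pi_B$ are surjective homomorphisms, Theorem \ref{t1.3.6} immediately yields that $A = \pi_A(L)$ and $B = \pi_B(L)$ are restrictable, with no further work.

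For the converse, assume $A$ and $B$ restrictable and fix $x = a + b \in L$ with $a \in A$ and $b \in B$. Because left multiplication is linear, $L_x = L_a + L_b$ in $\mathrm{Der}(L)$. The key structural observation I would establish is that $L_a$ preserves $A$ and kills $B$ (as $[a,B] \subseteq [A,B] = 0$), while $L_b$ kills $A$ and preserves $B$; consequently $L_a L_b = L_b L_a = 0$, so in particular $L_a$ and $L_b$ commute. Working in characteristic $p$, the usual binomial collapse then gives $(L_x)^p = (L_a + L_b)^p = (L_a)^p + (L_b)^p$. Since $A$ is restrictable there is $a' \in A$ with $(L_a|_A)^p = L_{a'}|_A$ on $A$; I would then check that $(L_a)^p$ and $L_{a'}$ agree on all of $L$ by testing separately on $A$ (where both equal $L_{a'}|_A$) and on $B$ (where both vanish, using $[a',B] = 0$). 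Thus $(L_a)^p = L_{a'}$, and symmetrically $(L_b)^p = L_{b'}$ for some $b' \in B$; summing gives $(L_x)^p = L_{a'} + L_{b'} = L_{a'+b'} \in L_L$, so $L$ is restrictable.

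The routine parts are the homomorphism check and the binomial identity. The step I expect to require the most care is the identification $(L_a)^p = L_{a'}$ as operators on the \emph{whole} of $L$ rather than merely on $A$: the restrictability of $A$ only furnishes an element $a'$ witnessing the equation on $A$, so one must separately invoke the orthogonality $[A,B] = 0$ to see that both $(L_a)^p$ and $L_{a'}$ annihilate $B$ and hence coincide globally. This is precisely the point at which the direct-sum-of-ideals hypothesis does the work.
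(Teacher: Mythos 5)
Your proof is correct, and on the converse direction it takes a genuinely different route from the paper. For the forward direction you and the paper do essentially the same thing: the paper applies Theorem \ref{t1.3.6} to the quotient maps via $A\cong L/B$, $B\cong L/A$, while you apply it to the projections $\pi_A,\pi_B$ --- the same argument in different clothing. For the converse, however, the paper simply invokes its semidirect-product theorem (Theorem \ref{t1.3.7}) with $\varphi=0$; that theorem in turn only verifies $(L_x)^p\in L_{A\oplus_\varphi B}$ for $x\in A$ and $x\in B$ separately and then appeals to the basis criterion of Theorem \ref{t1.3.4} to handle mixed elements. You instead treat an arbitrary $x=a+b$ head-on: since $[A,B]=[B,A]=0$ forces $L_aL_b=L_bL_a=0$, every mixed word in the expansion of $(L_a+L_b)^p$ vanishes, so $(L_x)^p=(L_a)^p+(L_b)^p$ (note this needs no characteristic-$p$ binomial identity at all --- the cross terms die identically), and your careful patching of the witness $a'$ from $A$ to all of $L$ is exactly the point that must be checked. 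Your route buys two things: it is self-contained, avoiding both Theorem \ref{t1.3.4} and Theorem \ref{t1.3.7}; and it quietly sidesteps a gap in the paper's citation, since Theorem \ref{t1.3.7} is stated for $B$ a restricted \emph{Lie} algebra (per Definition \ref{d1.3.6}), whereas in the corollary $B$ is merely a restrictable Leibniz ideal. What the paper's route buys in exchange is generality: the semidirect-product machinery handles nonzero $\varphi$ and feeds directly into Corollary \ref{c1.3.9}, where your vanishing-cross-term trick would not suffice without the hypothesis $[A,B]=[B,A]=0$.
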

\begin{proof}
 If $A,B$ are  restrictable, by Theorem \ref{t1.3.7} and setting $\varphi=0,$  we conclude that $L$ is restrictable.
 If $L$ is restrictable , so are $A\cong L/B,$ $B\cong L/A$ by Theorem \ref{t1.3.6}.
 \end{proof}

 \begin{corollary} \label{c1.3.9} Let $A,B$ be restrictable ideals of a Leibniz algebra $L$ such that $L=A+B$
 and $[A,B]=[B,A]=0.$ Then $L$ is restrictable.

\end{corollary}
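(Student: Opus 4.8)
The plan is to realize $L$ as a surjective homomorphic image of the external direct sum of $A$ and $B$, and then invoke the two results that immediately precede the statement: Corollary \ref{c1.3.8} to produce a restrictable algebra, and Theorem \ref{t1.3.6} to transport restrictability along a surjection. The essential obstruction to applying Corollary \ref{c1.3.8} directly is that the hypothesis only furnishes $L=A+B$, not an internal direct sum; in general $A\cap B\neq 0$, so $L$ itself need not split. The device is to pass to an external object where the splitting is automatic.

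First I would form the external direct sum $\tilde{L}:=A\oplus B$, equipped with the componentwise bracket $[(a,b),(a',b')]:=([a,a'],[b,b'])$. Since $A$ and $B$ are Leibniz algebras and the cross terms are declared to be zero, $\tilde{L}$ is a Leibniz algebra in which $A\cong A\oplus 0$ and $B\cong 0\oplus B$ are ideals realizing $\tilde{L}$ as their internal direct sum. As $A$ and $B$ are restrictable by hypothesis, Corollary \ref{c1.3.8} gives that $\tilde{L}$ is restrictable.

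Next I would consider the addition map $\pi:\tilde{L}\to L$, $\pi(a,b):=a+b$, which is surjective because $L=A+B$. The key point is that $\pi$ is a homomorphism of Leibniz algebras: for $(a,b),(a',b')\in\tilde{L}$ one computes
$$[\pi(a,b),\pi(a',b')]=[a+b,a'+b']=[a,a']+[a,b']+[b,a']+[b,b'],$$
and the middle two summands lie in $[A,B]+[B,A]=0$, so this equals $[a,a']+[b,b']=\pi([a,a'],[b,b'])=\pi([(a,b),(a',b')])$. This is precisely the step where the hypothesis $[A,B]=[B,A]=0$ is used, and it is the only nontrivial point in the argument; I expect the verification that $\pi$ respects the bracket to be the main (and essentially the sole) obstacle. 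Finally, $\pi$ being a surjective homomorphism out of the restrictable algebra $\tilde{L}$, Theorem \ref{t1.3.6} yields that $L=\pi(\tilde{L})$ is restrictable.

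If one preferred to avoid the external construction, an alternative is a direct operator computation: writing $x=a+b$ with $a\in A$, $b\in B$, the relations $[A,B]=[B,A]=0$ together with the ideal property force $L_{a}L_{b}=L_{b}L_{a}=0$, whence $(L_{x})^{p}=(L_{a}+L_{b})^{p}=L_{a}^{p}+L_{b}^{p}$; restrictability of $A$ and $B$ then supplies $a^{[p]}\in A$ and $b^{[p]}\in B$ with $L_{a}^{p}=L_{a^{[p]}}$ and $L_{b}^{p}=L_{b^{[p]}}$ as operators on all of $L$, so that $(L_{x})^{p}=L_{a^{[p]}+b^{[p]}}\in L_{L}$. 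The care required in this route is that the decomposition $x=a+b$ is not unique when $A\cap B\neq 0$, but both displayed operators are independent of the chosen decomposition, so the conclusion is well defined.
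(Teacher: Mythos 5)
Your main argument is correct and is essentially identical to the paper's proof: the paper likewise forms the external direct sum $A\oplus B$, checks that the addition map $(x,y)\mapsto x+y$ is a surjective homomorphism using $[A,B]=[B,A]=0$, and then applies Corollary \ref{c1.3.8} followed by Theorem \ref{t1.3.6}. Your alternative direct operator computation is also sound, but it is extra; the route you actually take coincides with the paper's.
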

\begin{proof}
 Define a mapping $f:A\oplus B\rightarrow L, (x,y)\mapsto x+y.$
Clearly, $f$ is a surjective  homomorphism.
For $(x_{1},y_{1}), (x_{2}, y_{2})\in A\oplus B,$
by $[A,B]=[B,A]=0,$ one gets $[x_{1},y_{2}]=[y_{1},x_{2}]=0.$ We have
\begin{eqnarray*}&&
f[(x_{1},y_{1}), (x_{2}, y_{2})]=f([x_{1},y_{1}], [x_{2}, y_{2}])\\
&&=[x_{1},x_{2}]+[y_{1},y_{2}]=[x_{1},x_{2}]+[x_{1},y_{2}]+[y_{1},x_{2}]+[y_{1},y_{2}]\\
&&=[x_{1}+y_{1},x_{2}+y_{2}]=[f(x_{1},y_{1}),f(x_{2}, y_{2})].
\end{eqnarray*}
By Corollary \ref{c1.3.8}, we have $A\oplus B$ is restrictable.
By Theorem \ref{t1.3.6}, one gets $L$ is restrictable.
\end{proof}
\begin{definition} Let $L$ be a Leibniz algebra and $\psi$ be a symmetric bilinear form on $L.$
$\psi$ is called associative, if $\psi([x,z],y)=\psi(x,[z,y]).$
\end{definition}

\begin{definition} Let $L$ be a Leibniz algebra and $\psi$ a symmetric bilinear form on $L.$
Set $L^{\bot}=\{x\in L| \psi(x,y)=0, \forall \ y\in L\}.$ $L$ is called nondegenerate, if $L^{\bot}=0.$
\end{definition}

\begin{theorem} \label{l1.3.16} Let $L$ be a subalgebra of the restricted Leibniz algebra $(G,[p]).$  Assume
 $\lambda:G\times G\rightarrow \mathbb{F}$ to be an associative symmetric bilinear form, which is nondegenerate on $L\times L.$
 Then $L$ is restrictable.
\end{theorem}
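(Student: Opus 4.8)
The plan is to show directly that $L$ is restrictable in the sense of Definition \ref{d1.3.400}, i.e. that for each $x\in L$ there is some $y\in L$ with $(L_x)^{p}=L_{y}$ as operators on $L$. First I would record that, since $L$ is a subalgebra, the left multiplication $L_{x}$ preserves $L$, so $(L_{x})^{p}$ is a well-defined endomorphism of $L$ whose values on $L$ coincide with those computed in $G$. Because $(G,[p])$ is restricted, $(L_{x})^{p}=L_{x^{[p]}}$ holds on all of $G$, hence $(L_{x})^{p}(w)=[x^{[p]},w]$ for every $w\in L$, where $x^{[p]}\in G$. The difficulty is that $x^{[p]}$ need not lie in $L$, so I must manufacture a genuine element $y$ of $L$ that acts on $L$ exactly as $x^{[p]}$ does.

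Here is where nondegeneracy enters. The assignment $v\mapsto \lambda(x^{[p]},v)$ is a linear functional on $L$, and since $\lambda$ is nondegenerate on $L\times L$ the correlation map $L\rightarrow L^{*}$, $z\mapsto \lambda(z,-)|_{L}$, is injective, hence bijective by finite-dimensionality. I would therefore define $y\in L$ to be the unique element satisfying $\lambda(y,v)=\lambda(x^{[p]},v)$ for all $v\in L$; intuitively $y$ is the shadow of $x^{[p]}$ inside $L$ seen through the form.

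It remains to verify $L_{y}=(L_{x})^{p}$ on $L$. Fixing $v,w\in L$ and using associativity of $\lambda$ twice, once for elements of $L$ and once for $x^{[p]}\in G$, together with the defining property of $y$, I would compute
\begin{eqnarray*}
\lambda([y,w],v)&=&\lambda(y,[w,v])=\lambda(x^{[p]},[w,v])\\
&=&\lambda([x^{[p]},w],v)=\lambda((L_{x})^{p}(w),v).
\end{eqnarray*}
Both $[y,w]$ and $(L_{x})^{p}(w)$ lie in $L$, so the equality of their $\lambda$-pairings against every $v\in L$ forces $[y,w]=(L_{x})^{p}(w)$ by nondegeneracy on $L$. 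As $w$ was arbitrary, $L_{y}=(L_{x})^{p}$, whence $(L_{x})^{p}\in L_{L}$ and $L$ is restrictable.

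The only genuine obstacle is the one flagged above: $x^{[p]}$ escapes $L$, and the entire point of the argument is that associativity converts the action of $x^{[p]}$ on $L$ into a pairing against $[w,v]\in L$, which the form detects and which can therefore be matched by an element of $L$. Once the substitute $y$ is produced, each remaining step is a direct application of associativity and nondegeneracy, with no convergence issues and no characteristic-$p$ subtleties to overcome.
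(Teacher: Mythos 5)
Your proposal is correct and follows essentially the same route as the paper: both use nondegeneracy of $\lambda$ on $L\times L$ to produce the unique $y\in L$ representing the functional $v\mapsto\lambda(x^{[p]},v)$, and both use associativity of $\lambda$ to convert $\lambda(x^{[p]}-y,[w,v])=0$ into $[x^{[p]}-y,w]=0$ on $L$, yielding $(L_{x}|_{L})^{p}=L_{y}|_{L}$. The only cosmetic difference is that the paper phrases the final step as $\lambda([x^{[p]}-y,L],L)=\lambda(x^{[p]}-y,L^{(1)})=0$ while you verify $[y,w]=(L_{x})^{p}(w)$ pairing by pairing, which is the same computation.
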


\begin{proof} Since $\lambda$ is nondegenerate on $L\times L,$ every linear form $f$ on $L$ is determined
by a suitably chosen element $y\in L:f(z)=\lambda(y,z), \forall z\in L.$ Let $x\in L.$
Then there exists $y\in L$ such that
$$\lambda(x^{[p]},z)=\lambda(y,z),\forall z\in L.$$
This implies that $0=\lambda (x^{[p]}-y,L^{(1)})=\lambda ([x^{[p]}-y,L],L)$ and $[x^{[p]}-y,L]=0.$
Therefore, we have
$$(L_{x}|_{L})^{p}=L_{x^{[p]}}|_{L}=L_{y}|_{L},$$
proving that $L$ is  restrictable.
\end{proof}
\begin{corollary} \label{c1.3.13} Let $(S,T)$ be a finite-dimensional  representation of $L$
such that $k_{T}$ is nondegenerate on $L\times L,$ where $S:L\rightarrow \mathrm{End}(M)$ and $T:L\rightarrow \mathrm{End}(M).$ Then $L$ is restrictable.
\end{corollary}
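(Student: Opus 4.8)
The plan is to realize $L$ as an honest subalgebra of a concrete restricted Leibniz algebra carrying an associative symmetric bilinear form, so that Theorem \ref{l1.3.16} applies directly. The natural ambient object is $G := \mathrm{gl}(M) = \mathrm{End}(M)^{-}$, the (finite-dimensional, since the representation is finite-dimensional) associative algebra $\mathrm{End}(M)$ equipped with the commutator bracket. Because $\mathrm{End}(M)$ is associative, $G$ is a restricted Lie algebra under the $p$-power map $X\mapsto X^{p}$, hence a restricted Leibniz algebra. On $G$ I would use the trace form $\beta(X,Y):=\mathrm{tr}(XY)$, which is symmetric and, by the cyclic invariance of the trace, associative in the paper's sense $\beta([X,Z],Y)=\beta(X,[Z,Y])$ for the commutator bracket. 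By definition $k_{T}(x,y)=\mathrm{tr}(T_{x}\circ T_{y})=\beta(T_{x},T_{y})$, so $k_{T}$ is precisely the pullback of $\beta$ along $T$.

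First I would record that $T\colon L\to G$ is a homomorphism of Leibniz algebras: the representation axiom $T_{[a,b]}=T_{a}\circ T_{b}-T_{b}\circ T_{a}$ says exactly that $T$ carries the Leibniz bracket of $L$ to the commutator bracket of $G$, so the image $T(L)$ is a subalgebra of $G$. Next I would use nondegeneracy of $k_{T}$ to show $T$ is injective: if $T_{x}=0$ then $k_{T}(x,y)=\beta(0,T_{y})=0$ for every $y\in L$, whence $x=0$. Thus $T$ is an isomorphism of $L$ onto the subalgebra $T(L)\subseteq G$, and the nondegeneracy of $k_{T}$ transports to nondegeneracy of $\beta$ on $T(L)\times T(L)$.

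With these identifications in place, the subalgebra $T(L)$ of the restricted Leibniz algebra $(G,[p])$ meets the hypotheses of Theorem \ref{l1.3.16} with the associative symmetric form $\beta$, nondegenerate on $T(L)\times T(L)$; hence $T(L)$ is restrictable. Finally, since $T\colon L\to T(L)$ is an isomorphism, its inverse $T^{-1}\colon T(L)\to L$ is a surjective homomorphism of Leibniz algebras, so Theorem \ref{t1.3.6} yields that $L$ itself is restrictable.

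I expect the main obstacle to be the bookkeeping required to pass between $L$ and its image, since Theorem \ref{l1.3.16} is stated for a genuine subalgebra of $G$ rather than for a homomorphic image. The whole argument therefore hinges on two points: extracting injectivity of $T$ from the nondegeneracy of $k_{T}$, and verifying that $\beta$ is truly associative on all of $G$ (the single place where cyclicity of the trace is used). Once $T$ is known to be an embedding, the two remaining steps are formal applications of the results already available.
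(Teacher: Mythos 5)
Your proposal is correct and follows essentially the same route as the paper: embed $L$ via $T$ into $\mathrm{End}(M)$ with the trace form, invoke Theorem \ref{l1.3.16} to get restrictability of $T(L)$, and transfer back along the faithful map $T$. You merely spell out two steps the paper leaves implicit --- deducing injectivity of $T$ from the nondegeneracy of $k_{T}$, and using $T^{-1}$ together with Theorem \ref{t1.3.6} to pass from $T(L)$ to $L$ --- which is a welcome tightening, not a different argument.
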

\begin{proof}
 The associative form $(x,y)\mapsto \mathrm{tr}(x,y)$ on $\mathrm{End}(M)\times \mathrm{End}(M)$ is nondegenerate on $T(L)\times T(L).$
 Hence $T(L)$ is restrictable, since $T$ is faithful, $L$ is restrictable.
\end{proof}

\begin{proposition} \label{p1.3.14} Let $L$ be a restrictable Leibniz algebra and $H$ a subalgebra of $L.$ Then $H$ is a
$p$-subalgebra for some mapping $[p]$ on $L$ if and only if $(L_{H}|_{L})^{p}\subseteq L_{H}|_{L}.$
\end{proposition}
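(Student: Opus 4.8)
The plan is to treat the two implications separately: the forward one is immediate from the definition of a $p$-mapping, while the reverse one requires a basis-extension construction followed by a closure argument.

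For the forward direction, suppose $H$ is a $p$-subalgebra for some $p$-mapping $[p]$ on $L$. Then $h^{[p]}\in H$ for every $h\in H$, and Definition \ref{d1.1.18}(1) gives $(L_{h})^{p}=L_{h^{[p]}}$. Since $h^{[p]}\in H$, this operator lies in $L_{H}|_{L}$, whence $(L_{H}|_{L})^{p}\subseteq L_{H}|_{L}$.

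For the reverse direction, assume $(L_{H}|_{L})^{p}\subseteq L_{H}|_{L}$. First I would fix a basis $(e_{j})_{j\in J_{0}}$ of $H$ and extend it to a basis $(e_{j})_{j\in J}$ of $L$ with $J_{0}\subseteq J$. For $j\in J_{0}$ the hypothesis supplies $h_{j}\in H$ with $(L_{e_{j}})^{p}=L_{h_{j}}$, while for $j\in J\setminus J_{0}$ restrictability of $L$ (Definition \ref{d1.3.400}) supplies $y_{j}\in L$ with $(L_{e_{j}})^{p}=L_{y_{j}}$. Setting $y_{j}:=h_{j}$ for $j\in J_{0}$, Theorem \ref{t1.3.4} then produces a $p$-mapping $[p]:L\rightarrow L$ with $e_{j}^{[p]}=y_{j}$ for all $j\in J$; in particular $e_{j}^{[p]}=h_{j}\in H$ for every basis vector of $H$.

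It remains to verify that this $p$-mapping sends $H$ into $H$, that is, that $H$ is a $p$-subalgebra. The key observation is that for $a,b\in H$ each correction term $s_{i}(a,b)$ of Definition \ref{d1.1.18}(3) lies in $H$: by its defining equation $s_{i}(a,b)$ is extracted from $(L(a\otimes X+b\otimes 1))^{p-1}(a\otimes 1)$, hence is a sum of iterated left multiplications (nested brackets) of $a$ and $b$, all of which remain in $H$ because $H$ is a subalgebra. Granting this, I would argue by induction on the number of basis terms in an arbitrary $h=\sum_{j\in J_{0}}\alpha_{j}e_{j}$: the base case uses $(\alpha e_{j})^{[p]}=\alpha^{p}e_{j}^{[p]}=\alpha^{p}h_{j}\in H$ from Definition \ref{d1.1.18}(2), and the inductive step uses $(a+b)^{[p]}=a^{[p]}+b^{[p]}+\sum_{i=1}^{p-1}s_{i}(a,b)$, where $a^{[p]},b^{[p]}\in H$ by induction and $s_{i}(a,b)\in H$ by the observation. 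Thus $h^{[p]}\in H$ for all $h\in H$, so $H$ is a $p$-subalgebra. I expect the main obstacle to be exactly this closure verification, and within it the justification that the $s_{i}(a,b)$ are genuine nested-bracket expressions in $a$ and $b$ rather than merely elements of $L$; once that is extracted from the defining identity, the subalgebra property of $H$ makes the induction routine.
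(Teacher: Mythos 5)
Your proposal is correct, and for the reverse implication it takes a genuinely different --- and more complete --- route than the paper. The paper's own proof of $(\Leftarrow)$ is two lines: from $(L_{H}|_{L})^{p}\subseteq L_{H}|_{L}$ it concludes that $H$ is restrictable, invokes Theorem \ref{t1.3.400} to get a $p$-mapping on $H$, and then simply asserts ``thereby, $H$ is a $p$-subalgebra of $L$.'' Read literally this leaves the essential step implicit: a $p$-mapping defined on $H$ alone does not yet produce a $p$-mapping on all of $L$ under which $H$ is closed, which is what the statement asserts, and one cannot patch this via Proposition \ref{p1.3.1}, since the difference between a $p$-mapping on $H$ and one on $L$ is only $p$-semilinear into $Z_{L}(H)$, not into $Z(L)$. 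Your basis-extension construction --- prescribing $e_{j}^{[p]}=h_{j}\in H$ on a basis of $H$, using restrictability of $L$ (Definition \ref{d1.3.400}) for the complementary basis vectors, and invoking Theorem \ref{t1.3.4} --- is exactly the missing construction; it is the standard Strade--Farnsteiner argument in the Lie case and mirrors how the paper itself deduces Theorem \ref{t1.3.400} from Theorem \ref{t1.3.4}. Your closure step is also sound: expanding $(L(a\otimes X+b\otimes 1))^{p-1}(a\otimes 1)$ shows each $s_{i}(a,b)$ is a sum of nested brackets $[c_{1},[\cdots[c_{p-1},a]\cdots]]$ with $c_{k}\in\{a,b\}$, hence lies in $H^{2}\subseteq H$; the paper relies on the same observation in the proofs of Theorem \ref{t1.3.4} and Proposition \ref{p3.1.1}, where it is packaged slightly more cleanly as ``$V=\{x\in H\mid x^{[p]}\in H\}$ is a subspace containing a basis of $H$,'' which you may prefer to your induction on the number of basis terms. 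In short: your forward direction coincides with the paper's; your reverse direction supplies a full proof where the paper's is a terse appeal whose extension-to-$L$ step is left unjustified.
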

\begin{proof}
$(\Rightarrow)$ If $H$ is a
$p$-subalgebra, then $x^{[p]}\in H,\ \forall x\in H.$
$(L_{x})^{p}=L_{x^{[p]}}\subseteq L_{H}|_{L}.$
Hence, $(L_{H}|_{L})^{p}\subseteq L_{H}|_{L}.$

$(\Leftarrow)$ If $(L_{H}|_{L})^{p}\subseteq L_{H}|_{L},$ then $H$ is restrictable. By Theorem  \ref{t1.3.400},
$H$ is restricted. Thereby, $H$ is a $p$-subalgebra of $L.$
\end{proof}

\begin{proposition} \label{p1.3.15} Let $L,$ $L^{'}$ be restrictable Leibniz algebras and $f: L\rightarrow L^{'}$ a surjective
homomorphism. If $Z(L^{'})=0,$ then $\mathrm{ker}(f)$ is a $p$-ideal for every $p$-mapping on $L.$
\end{proposition}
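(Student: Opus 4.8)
The plan is to show that the surjection $f$ intertwines the two $p$-mappings, and then to read the $p$-ideal property off this compatibility. First I would observe that since $L'$ is restrictable with $Z(L')=0$, Corollary \ref{c1.3.3}(1) guarantees that $L'$ carries \emph{exactly one} $p$-mapping, which I denote by $[p]'$. I then fix an arbitrary $p$-mapping $[p]$ on $L$ (only its defining property (1) from Definition \ref{d1.1.18} will be used, so the final conclusion will hold for every $p$-mapping on $L$ as stated). The goal reduces to the single relation $f(x^{[p]})=f(x)^{[p]'}$ for all $x\in L$: granting this, for $x\in\ker(f)$ one gets $f(x^{[p]})=f(x)^{[p]'}=0^{[p]'}=0$, where $0^{[p]'}=0$ follows from property (2) with $\alpha=0$. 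Since $f$ is a homomorphism, $\ker(f)$ is automatically a two-sided ideal, so this closure under $[p]$ is precisely what is needed to conclude that $\ker(f)$ is a $p$-ideal.

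The core step is the relation $f(x^{[p]})=f(x)^{[p]'}$, and I would establish it by comparing left-multiplication operators on $L'$. On one side, property (1) of the unique $p$-mapping on $L'$ gives $L_{f(x)^{[p]'}}=(L_{f(x)})^{p}$. On the other side, I evaluate $L_{f(x^{[p]})}$ on an arbitrary element of $L'$: by surjectivity every such element is of the form $f(y)$ with $y\in L$, and pushing the iterated bracket through the homomorphism one computes
\[
L_{f(x^{[p]})}(f(y))=f\bigl(L_{x^{[p]}}(y)\bigr)=f\bigl((L_{x})^{p}(y)\bigr)=(L_{f(x)})^{p}(f(y)),
\]
the middle equality being property (1) of $[p]$ on $L$. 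Comparing the two computations yields $L_{f(x^{[p]})}=L_{f(x)^{[p]'}}$ as operators on $L'$. It is here that surjectivity of $f$ is genuinely invoked, to pass from agreement on all elements $f(y)$ to equality of the operators on all of $L'$.

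From this the hypothesis $Z(L')=0$ finishes the argument: equality of the left multiplications means $L_{f(x^{[p]})-f(x)^{[p]'}}=0$, so $f(x^{[p]})-f(x)^{[p]'}$ lies in the right center $Z(L')$, which is trivial; hence $f(x^{[p]})=f(x)^{[p]'}$. I do not anticipate a serious obstacle in this proof—it is essentially bookkeeping with the defining identity $L_{a^{[p]}}=(L_a)^p$—and the only subtle points are the two just flagged: that surjectivity is used to compare operators, and that the uniqueness from Corollary \ref{c1.3.3}(1) together with $Z(L')=0$ is what pins down $f(x)^{[p]'}$ and forces the center difference to vanish.
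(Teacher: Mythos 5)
Your proof is correct and takes essentially the same route as the paper: both arguments push the defining identity $L_{x^{[p]}}=(L_{x})^{p}$ through the surjection to obtain $L_{f(x^{[p]})}=(L_{f(x)})^{p}=L_{f(x)^{[p]'}}$ on $L'$ and then use $Z(L')=0$ to force equality of the underlying elements. The only difference is packaging—you first establish the intertwining relation $f(x^{[p]})=f(x)^{[p]'}$ for all $x\in L$ (i.e., that $f$ is automatically a $p$-homomorphism) and then specialize to $x\in\ker(f)$, whereas the paper runs the identical computation directly for $x\in\ker(f)$, where $(L_{f(x)})^{p}=0$ immediately.
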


\begin{proof} Clearly, $\mathrm{ker}(f)\lhd L.$
Since $L$ is restrictable, there exists $y\in L$ such that $(L_{x})^{p}=L_{y}, \forall x\in \mathrm{ker}(f).$
For $z\in L,$ we have $(L_{x})^{p}(z)=L_{y}(z).$
i.e., $[x,\cdots [x,[x,z]]\cdots ]=[y,z].$ Since $f$ is a homomorphism mapping, $[f(x),\cdots[f(x),[f(x),f(z)]]\cdots ]=[f(y),f(z)],$
that is, $(L_{f(x)})^{p}(f(z))=L_{f(y)}(f(z)).$
Since $f$ is a surjective mapping, one gets $L^{'}=\{f(z)| z\in L\},$ hence
$(L_{f(x)})^{p}=L_{f(y)}.$ By Theorem \ref{t1.3.6}, we have $L^{'}$ is restrictable.
Moreover, $L_{f(x)^{[p]^{'}}}=L_{f(y)}.$ By $Z(L^{'})=0,$ one gets $f(y)=f(x)^{[p]^{'}}=0, y\in \mathrm{ker}(f).$
 $(L_{x})^{p}=L_{y}\in L_{\mathrm{ker}(f)},$ that is, $\mathrm{ker}(f)$ is restrictable.
Therefore, $\mathrm{ker}(f)\lhd_{p}L.$
\end{proof}
\begin{theorem} \label{p1.3.16} Let $(L,[p])$ be a restricted Leibniz algebra and $D$ a derivation. Then
$D(x^{[p]})-(L_{x})^{p-1}(D(x))\in Z(L), \forall x\in L.$
\end{theorem}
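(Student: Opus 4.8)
The plan is to prove that $w := D(x^{[p]}) - (L_x)^{p-1}(D(x))$ lies in $Z(L)$ by showing it is killed by left multiplication, since $Z(L) = \{a \in L : [a,L]=0\} = \{a \in L : L_a = 0\}$. As the assignment $a \mapsto L_a$ is linear, it suffices to establish the operator identity $L_{D(x^{[p]})} = L_{(L_x)^{p-1}(D(x))}$ inside the associative algebra $\mathrm{End}_{\mathbb{F}}(L)$. I would compute each side separately and match them.

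First I would record how a derivation interacts with left multiplications. Evaluating on an arbitrary $y \in L$ gives $(DL_x - L_x D)(y) = D[x,y] - [x,D(y)] = [D(x),y]$, so $[D,L_x] = L_{D(x)}$ as operators. Iterating the Lie-homomorphism rule $L_{[x,u]} = [L_x,L_u]$ recalled before Definition 1.11 yields $L_{(L_x)^{k}(z)} = (\mathrm{ad}\,L_x)^{k}(L_z)$ for all $k$, where $\mathrm{ad}$ is the inner derivation of $\mathrm{End}_{\mathbb{F}}(L)$; in particular $L_{(L_x)^{p-1}(D(x))} = (\mathrm{ad}\,L_x)^{p-1}(L_{D(x)})$. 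This handles the right-hand side.

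For the left-hand side I would invoke the restricted structure. Property (1) of the $p$-mapping gives $L_{x^{[p]}} = (L_x)^{p}$, and combined with the first step this yields $L_{D(x^{[p]})} = [D, L_{x^{[p]}}] = [D,(L_x)^{p}]$. Expanding the inner derivation $[D,-]$ of $\mathrm{End}_{\mathbb{F}}(L)$ by its Leibniz rule produces $L_{D(x^{[p]})} = \sum_{i=0}^{p-1} (L_x)^{i} L_{D(x)} (L_x)^{p-1-i}$.

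The two expressions are reconciled by the classical characteristic-$p$ identity $(\mathrm{ad}\,a)^{p-1}(b) = \sum_{i=0}^{p-1} a^{i} b\, a^{p-1-i}$, valid in any associative $\mathbb{F}$-algebra; applying it with $a = L_x$ and $b = L_{D(x)}$ shows the sum above equals $(\mathrm{ad}\,L_x)^{p-1}(L_{D(x)})$, exactly the right-hand side. Hence $L_w = 0$ and $w \in Z(L)$. The one point needing genuine care is this identity, and it is the main obstacle: I would prove it by writing $\mathrm{ad}\,a = \ell_a - r_a$ as the difference of the commuting left and right associative multiplications, expanding $(\ell_a - r_a)^{p-1}$ by the binomial theorem, and using $\binom{p-1}{i} \equiv (-1)^{i} \pmod{p}$ so that each coefficient $\binom{p-1}{i}(-1)^{i}$ collapses to $1$. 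This congruence and the cancellation it forces are the crux of the whole computation.
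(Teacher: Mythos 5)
Your proposal is correct and takes essentially the same route as the paper: both arguments reduce the claim to the characteristic-$p$ identity $(\mathrm{ad}\,A)^{p-1}(B)=\sum_{i=0}^{p-1}A^{i}BA^{p-1-i}$, with the congruence $\binom{p-1}{i}\equiv(-1)^{i}\pmod{p}$ as the crux, applied to $A=L_{x}$ and $B=L_{D(x)}$. The differences are only organizational: you work at the operator level, expanding $[D,(L_{x})^{p}]$ by the commutator Leibniz rule and proving the key identity via the commuting splitting $\mathrm{ad}\,a=\ell_{a}-r_{a}$, whereas the paper distributes $D$ over the nested bracket $[x,\cdots[x,y]\cdots]$ and establishes the same identity by induction on $k$.
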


\begin{proof}
Let $D\in \mathrm{Der}(L)$ and $a,x\in L.$ If $A$ is the transformation $x\mapsto [a,x]$ and $B$ is the transformation $x\mapsto [D(a),x],$ then
 $A=L_{a},$  $B=L_{D(a)}.$  We can prove $(L_{A})^{k}(B)=\sum_{i=0}^{k}(-1)^{k-i}C_{k}^{i}A^{i}BA^{k-i}$
by induction on $k.$

Then by the result, we have
$$(L_{A})^{p-1}(B)=\sum\limits_{i=0}^{p-1}(-1)^{p-1-i}C_{p-1}^{i}A^{i}BA^{p-1-i}.$$
Since $$C_{p-1}^{i}=\frac{(p-1)(p-2)\cdots (p-i)}{i\cdot(i-1)\cdots
1}=\frac{(-1)(-2)\cdots (-i)}{i\cdot(i-1)\cdots 1}=(-1)^{i},$$ we
have $(-1)^{p-1-i}C_{p-1}^{i}=(-1)^{p-1}=1.$ So
$$BA^{p-1}+ABA^{p-2}+\cdots+A^{p-1}B=[A,\cdots[A,B]\cdots].$$ Then
\begin{eqnarray*}&&
D[a^{[p]},x]=D[a,\cdots[a,x]\cdots]\\
&&=[D(a),\cdots[a,x]\cdots]+\cdots +[a,\cdots[a,D(x)]\cdots]\\
&&=[a^{[p]},D(x)]+[a,a\cdots[a,D(a)]\cdots x].
\end{eqnarray*}
On the other hand, we have $D[a^{[p]},x]=[D(a^{[p]}),x]+[a^{[p]},D(x)]$ since $D$ is a derivation. Hence $[D(a^{p}),x]=[a,a\cdots [a,D(a)]\cdots x]$ for all $x\in L,$
that is, $D(a^{[p]})-(L_{a})^{p-1}(D(a))\in Z(L), \forall a\in L.$
\end{proof}
\begin{corollary} \label{c1.3.16} Let $(L,[p])$ be a restricted Leibniz algebra. If $Z(L)=0,$
then every derivation of $L$ is a restricted  derivation.
\end{corollary}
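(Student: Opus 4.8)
The plan is to derive this corollary directly from Theorem~\ref{p1.3.16}, since the bulk of the computational work has already been carried out there. The definition of a restricted derivation requires precisely the identity $D(a^{[p]})=(L_{a})^{p-1}(D(a))$ for all $a\in L$, and Theorem~\ref{p1.3.16} tells us that for any derivation $D$ the discrepancy $D(x^{[p]})-(L_{x})^{p-1}(D(x))$ always lands in the right center $Z(L)$. Thus the corollary should follow by simply feeding the hypothesis $Z(L)=0$ into that theorem.

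Concretely, I would begin by letting $D$ be an arbitrary derivation of $L$ and fixing an arbitrary element $x\in L$. Applying Theorem~\ref{p1.3.16} gives the membership
$$D(x^{[p]})-(L_{x})^{p-1}(D(x))\in Z(L).$$
Next I would invoke the assumption $Z(L)=0$ to conclude that this element must vanish, so that
$$D(x^{[p]})=(L_{x})^{p-1}(D(x)).$$
Since $x$ was arbitrary, this identity holds for every element of $L$, which is exactly the defining condition of a restricted derivation. Because $D$ itself was an arbitrary derivation, every derivation of $L$ is restricted, establishing the claim.

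I do not anticipate any genuine obstacle here: the substantive content—the inductive commutator identity $(L_{A})^{k}(B)=\sum_{i=0}^{k}(-1)^{k-i}C_{k}^{i}A^{i}BA^{k-i}$ and the reduction of the binomial coefficients modulo $p$—was all handled inside the proof of Theorem~\ref{p1.3.16}. The only point that warrants a moment's care is confirming that the expression appearing in Theorem~\ref{p1.3.16} matches the defining relation for a restricted derivation verbatim, which it does. Hence the proof is essentially a one-line specialization of the preceding theorem under the extra hypothesis that the right center is trivial.
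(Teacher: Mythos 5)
Your proof is correct and follows exactly the route the paper intends: the corollary is stated without proof precisely because it is the immediate specialization of Theorem~\ref{p1.3.16}, whose conclusion $D(x^{[p]})-(L_{x})^{p-1}(D(x))\in Z(L)$ reduces to the defining identity of a restricted derivation once $Z(L)=0$. Nothing further is needed.
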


\begin{corollary} \label{c1.3.160} Let $(L,[p])$ be a restricted Lie algebra with trivial center.
Then every derivation of $L$ is a restricted  derivation.
\end{corollary}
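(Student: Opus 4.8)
The plan is to derive Corollary \ref{c1.3.160} as an immediate specialization of Corollary \ref{c1.3.16} (equivalently, of Theorem \ref{p1.3.16}) to the Lie-algebra case. The key observation, recorded in the paper right after Definition \ref{d1.1.18}, is that every restricted Lie algebra is automatically a restricted Leibniz algebra: the $p$-mapping axioms of a restricted Lie algebra are literally the same as conditions $\mathrm{(1)}$--$\mathrm{(3)}$ of Definition \ref{d1.1.18}, and the Leibniz identity degenerates to the Jacobi identity once the bracket is skew-symmetric. Thus $(L,[p])$ qualifies as a restricted Leibniz algebra, and any $D\in\mathrm{Der}(L)$ is simultaneously a derivation in the Leibniz sense, so the earlier machinery applies verbatim.

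Second, I would check that the hypothesis ``trivial center'' matches the notion $Z(L)=0$ used in the preceding results. For a Lie algebra the bracket is anti-symmetric, so $[x,L]=0$ holds if and only if $[L,x]=0$; consequently the right center $Z(L)=\{x\in L\mid [x,L]=0\}$, the left center $C(L)$, and the ordinary center of the Lie algebra all coincide. Hence the assumption that $L$ has trivial center is exactly the assumption $Z(L)=0$ appearing in the statement of Corollary \ref{c1.3.16}.

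With these two remarks in place, the conclusion is immediate. By Theorem \ref{p1.3.16}, for every $x\in L$ one has $D(x^{[p]})-(L_{x})^{p-1}(D(x))\in Z(L)$; since $Z(L)=0$, this forces $D(x^{[p]})=(L_{x})^{p-1}(D(x))$ for all $x\in L$, which is precisely the defining condition of a restricted derivation. Therefore every derivation of $L$ is restricted.

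I expect no genuine obstacle here: the statement is strictly weaker than Corollary \ref{c1.3.16}, which already covers all restricted Leibniz algebras with $Z(L)=0$, so the Lie case is merely a special instance. The only point that warrants a line of care is the bookkeeping identification of the several ``center'' notions, and this collapses at once in the skew-symmetric setting; everything else is a direct invocation of the results proved above.
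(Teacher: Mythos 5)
Your proposal is correct and follows exactly the route the paper intends: the paper states this corollary without proof, treating it as the immediate specialization of Corollary \ref{c1.3.16} (via Theorem \ref{p1.3.16}) to the Lie case, using the remark after Definition \ref{d1.1.18} that every restricted Lie algebra is a restricted Leibniz algebra. Your additional check that the left, right, and ordinary centers coincide under skew-symmetry is a sensible piece of bookkeeping that the paper leaves tacit, but it does not constitute a different approach.
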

Let $S\subseteq L$ be a subset of a restricted Leibniz algebra
$(L,[p]).$ The intersection of all $p$-subalgebras containing $S$
will be denoted by $S_{p}.$ $S_{p}$ is a $p$-subalgebra generated
by $S$ in $L.$ By definition, $S_{p}$ is the smallest
$p$-subalgebra of $(L,[p])$ containing $S.$

We propose to give a more explicit characterization of $S_{p}$ in
some special cases. The image of $S$ under the iterated
application of the $p$-mapping $[p]$ will be denoted by
$S^{[p]^{i}},$ that is, $S^{[p]^{i}}:=\{x^{[p]^{i}}|x\in S \}.$

\begin{proposition}\label{p3.1.1} Let $(L,[p])$ be a restricted Leibniz algebra over $\mathbb{F}$ and $H\subseteq L$
a left ideal. Suppose that $(e_{j})_{j\in J}$ is a basis of $H.$
Then

$\mathrm{(1)}$ $ H_{p}=\sum_{i\in \mathbb{N}}\langle
H^{[p]^{i}}\rangle=\sum_{j\in J,i\in
\mathbb{N}}\mathbb{F}e_{j}^{[p]^{i}}.$

$\mathrm{(2)}$ $[H_{p},L]=[H,L];$ $(H_{p})^{n}=H^{n},$
$(H_{p})^{(n)}=H^{(n)}, n\geq 1.$

$\mathrm{(3)}$ $H_{p}$ is solvable (nilpotent) if and only if $H$ is solvable
(nilpotent).

$\mathrm{(4)}$ $H_{p}$ is a $p$-left ideal.
\end{proposition}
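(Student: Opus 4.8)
The plan is to prove all four parts from the single explicit description in (1), which I establish first. Write $N:=\sum_{j\in J,\,i\in\mathbb{N}}\mathbb{F}e_{j}^{[p]^{i}}$ and $M:=\sum_{i\in\mathbb{N}}\langle H^{[p]^{i}}\rangle$. The inclusions $N\subseteq M\subseteq H_{p}$ are immediate: each $e_{j}^{[p]^{i}}$ lies in $H^{[p]^{i}}\subseteq\langle H^{[p]^{i}}\rangle$, while $H_{p}$ is a $p$-subalgebra containing $H$, so it is a subspace closed under $[p]$ and hence contains every $\langle H^{[p]^{i}}\rangle$. The entire content of (1) is therefore the reverse inclusion $H_{p}\subseteq N$, and for this it suffices to show that $N$ is itself a $p$-subalgebra containing $H$ (it contains $H=\sum_{j}\mathbb{F}e_{j}^{[p]^{0}}$ by construction).

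First I would check that $N$ is closed under $[p]$, which simultaneously yields the second equality $M=N$. Given $x=\sum_{j}\alpha_{j}e_{j}\in H$, I would compute $x^{[p]^{i}}$ by induction on $i$, using property (2) of Definition \ref{d1.1.18} for the scalars and the Jacobson-type sum formula (3) for the cross terms. The point is that each correction $s_{k}(a,b)$ is a sum of iterated left multiplications $L_{c_{1}}\cdots L_{c_{p-1}}(a)$ whose innermost entry is $a$; since $H$ is a left ideal, $[L,H]\subseteq H$, so whenever that innermost entry lies in $H$ the whole term lies in $H$, i.e. in the index-$0$ summand of $N$. Carrying this through $x^{[p]}=\sum_{j}\alpha_{j}^{p}e_{j}^{[p]}+(\text{terms in }H)$ and iterating shows $x^{[p]^{i}}\in\sum_{j}\mathbb{F}e_{j}^{[p]^{i}}+\sum_{i'<i}\langle H^{[p]^{i'}}\rangle$, so $x^{[p]^{i}}\in N$, the set $N$ is $[p]$-stable, and the same computation gives $M=N$.

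For part (2) and for bracket closure I would exploit the defining relation $L_{e_{j}^{[p]^{i}}}=(L_{e_{j}})^{p^{i}}$ to convert brackets into iterated left multiplications. For $[H_{p},L]$ the innermost entry $[e_{j},z]$ already lies in $[H,L]$, and repeated application of $L_{e_{j}}$ keeps it there, so $[H_{p},L]\subseteq[H,L]$; the reverse inclusion being clear, $[H_{p},L]=[H,L]$. Next, since $[H,H^{n}]=H^{n+1}$ and the central series is decreasing, $(L_{e_{j}})^{p^{i}}(H^{n})\subseteq H^{n+1}$ whenever $p^{i}\geq 1$, hence $[H_{p},H^{n}]=H^{n+1}$ for $n\geq 1$. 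With this, clean inductions reduce both $(H_{p})^{n}=H^{n}$ and $(H_{p})^{(n)}=H^{(n)}$ to the single base identity $[H_{p},H_{p}]=[H,H]$. Granting that identity, part (3) is immediate: $H\subseteq H_{p}$ gives one implication, and the equalities of (2) give the converse for nilpotency and solvability. For (4), $p$-closure of $H_{p}$ is precisely the $p$-subalgebra property of (1), so only the left-ideal condition $[L,H_{p}]\subseteq H_{p}$ remains.

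The hard part, common to the bracket closure of $N$, the base identity $[H_{p},H_{p}]=[H,H]$, and the left-ideal condition in (4), is controlling brackets in which a $p$-power occupies the right slot, such as $[e_{j_{1}},e_{j_{2}}^{[p]^{i}}]$ and $[L,H_{p}]$. Since the Leibniz bracket is not skew-symmetric, the relation $L_{x^{[p]}}=(L_{x})^{p}$ tames only the left slot, and interchanging the two entries produces the symmetric correction $[a,b]+[b,a]$. The plan is to write $[a,b^{[p]}]=-(L_{b})^{p}(a)+\big([a,b^{[p]}]+[b^{[p]},a]\big)$, observe via Proposition \ref{p1.3.0} that such symmetric terms lie in $Z(L)$, and use Theorem \ref{p1.3.16} to rewrite these brackets modulo the center; once reduced modulo $Z(L)$ the expressions become iterated left multiplications landing in $H^{2}$ (respectively in $H_{p}$), where the Lie-type argument applies. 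Verifying that the residual central terms are genuinely absorbed into $N=H_{p}$ is the main obstacle, and it is precisely here that the left-ideal hypothesis and Proposition \ref{p1.3.0} must be combined with care.
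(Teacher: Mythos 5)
Your proposal is not yet a proof, and to your credit it says so itself: the final paragraph defers exactly the decisive step --- absorbing the symmetric corrections $[a,b^{[p]}]+[b^{[p]},a]\in Z(L)$ into $N=\sum_{j,i}\mathbb{F}e_{j}^{[p]^{i}}$ --- and labels it ``the main obstacle'' without closing it. The surrounding architecture is sound (the reductions $N\subseteq M\subseteq H_{p}$; the equality $[H_{p},L]=[H,L]$, which is genuinely correct because there the $p$-powers only ever occupy the left slot and $L_{e_{j}}$ is a derivation; the reduction of (2) and (3) to $[H_{p},H_{p}]=[H,H]$), but every one of parts (1)--(3) ultimately rests on a bracket of the form $[e_{k},e_{l}^{[p]^{j}}]$, and your method only yields $[e_{k},e_{l}^{[p]^{j}}]\in H+Z(L)$: nothing in the hypotheses places the central component in $N$, in $H$, or in $H^{2}$. (Two small inaccuracies: the fact you need is the general identity $[a,b]+[b,a]\in Z(L)$, which follows directly from the left Leibniz identity, not Proposition \ref{p1.3.0}, which treats only $p$-powers of a single element; and Theorem \ref{p1.3.16} is about derivations and does not bear on this. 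Note also that your $[p]$-stability induction meets the same wall already at the second iterate, since the corrections $s_{k}$ then involve brackets with $p$-powers in the right slot.)

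The obstacle is not a matter of care: for left ideals it cannot be closed, because the statement is false. Take $p\geq 3$ and $L=\mathbb{F}t\oplus\mathbb{F}m_{1}\oplus\mathbb{F}m_{2}$ with $[t,m_{1}]=m_{2}$ and all other basis brackets zero; this is a nilpotent non-Lie left Leibniz algebra, and since $[L,[L,L]]=0$ all Jacobson summands $s_{i}(a,b)$ vanish for $p\geq 3$, so $(\lambda t+\mu_{1}m_{1}+\mu_{2}m_{2})^{[p]}:=\lambda^{p}m_{1}$ satisfies all three axioms of Definition \ref{d1.1.18} (indeed $(L_{t})^{2}=0$, so $(L_{x})^{p}=0=L_{x^{[p]}}$). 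Here $H:=\mathbb{F}t$ is a left ideal ($[L,t]=0$), yet $[t,t^{[p]}]=[t,m_{1}]=m_{2}$, so $\sum_{i}\mathbb{F}t^{[p]^{i}}=\mathbb{F}t+\mathbb{F}m_{1}$ is not even a subalgebra while $H_{p}=L$: part (1) fails, and $(H_{p})^{2}=\mathbb{F}m_{2}\neq 0=H^{2}$, so part (2) fails as well; the escaping element $m_{2}=[t,t^{[p]}]+[t^{[p]},t]$ is precisely your residual central term. You should know that the paper's own proof elides the identical point: it asserts $(L_{e_{k}})^{p^{i}-1}[e_{k},e_{l}^{[p]^{j}}]\in H^{2}$, silently treating $[e_{k},e_{l}^{[p]^{j}}]$ as if the $p$-power stood in the left slot --- a step valid in the restricted Lie original \cite{sf} via skew-symmetry, but not here (its argument for (4) likewise invokes $[H,L]\subseteq H$, the right-ideal property, which was never assumed). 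If $H$ is a two-sided ideal the inner bracket lies in $[H,L]\subseteq H$ directly and part (1) is restored, though the equalities $(H_{p})^{n}=H^{n}$ still hold only modulo central terms; so your diagnosis of where the left-ideal hypothesis is insufficient was exactly right, and the correct conclusion is that the claim needs a stronger hypothesis, not a cleverer absorption argument.
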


\begin{proof} (1) Put $G:=\sum_{j\in J,i\in
\mathbb{N}}\mathbb{F}e_{j}^{[p]^{i}}.$ Then, clearly,
$H\subseteq G\subseteq \sum_{i\geq 0}\langle H^{[p]^{i}}\rangle
\subseteq H_{p}.$ To prove $H_{p}\subseteq G,$ we observe that by property
(1) of the definition of $p$-mapping, $[e_{k}^{[p]^{i}},$
$e_{l}^{[p]^{j}}]$ $=L_{e_{k}^{[p]^{i}}}(e_{l}^{[p]^{j}})$
$=(L_{e_{k}})^{p^{i}}(e_{l}^{[p]^{j}})$
$=(L_{e_{k}})^{p^{i}-1}[e_{k},e_{l}^{[p]^{j}}]\in H^{2}\subseteq
H\subseteq G.$ Hence $G$ is a subalgebra. Put $V=\{x\in G |
x^{[p]}\in G \}.$ Since $G$ is a subalgebra, (2) and
(3) of the definition of $p$-mapping prove that $V$ is a subspace
containing the generating set $\{e_{j}^{[p]^{i}}|j\in
J,i\geq 0\}$ of $G.$ Hence $V=G$ and $G$ is closed under
the $p$-mapping. Consequently, $G$ is a $p$-subalgebra containing $H$
and $H_{p}\subseteq G.$

(2) Considering (1) and (2) of the definition, we get $[H_{p},L]\subseteq [H,L].$

(3) It follows from (2).

(4) By (2), we have $[H_p,
L]=[H,L]\subseteq H \subseteq H_{p},$ $H_{p}$ is a left-ideal of $L.$  Moreover, $H_{p}$ is a $p$-subalgebra of $L.$ Hence
$H_{p}$ is a $p$-left ideal of $L.$
\end{proof}

\section{Restricted Leibniz algebras whose elements are \\ semisimple}

\begin{definition} Let $(L,[p])$ be a restricted Leibniz algebra over $\mathbb{F}.$ An element $x\in L$ is called semisimple
if $x=\sum\limits_{i=1}^{m}\alpha_{i}x^{[p]^{i}}$ and toral if
$x^{[p]}=x.$
\end{definition}

\begin{proposition} \label{p2.1.1} Let $(L,[p])$ be a restricted Leibniz algebra over $\mathbb{F}.$
Then the following statements hold:

$\mathrm{(1)}$ Every toral element is semisimple.

$\mathrm{(2)}$ If $x$ is semisimple, then $T(x)$ is semisimple for every finite-dimensional $p$-represen-\\tation $(S,T),$ where $S:L\rightarrow \mathrm{End}(M), T:L\rightarrow \mathrm{End}(M).$

$\mathrm{(3)}$ If $\mathbb{F}$ is perfect and $[p]$ is nonsingular, then every element $x\in L$ is semisimple.

$\mathrm{(4)}$ An endomorphism $\sigma \in \mathrm{End}(M)$ is semisimple if and only if it is semisimple as an element
of the restricted Leibniz algebra $(\mathrm{gl}(M),p).$
\end{proposition}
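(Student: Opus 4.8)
The plan is to dispatch the four assertions in order, reducing each, where possible, to the linear-algebraic Lemmas \ref{l2.1.1} and \ref{l2.1.2}. Assertion (1) is immediate: if $x$ is toral then $x^{[p]}=x$, so with $m=1$ and $\alpha_{1}=1$ we have $x=\alpha_{1}x^{[p]}$, which is the definition of semisimple. For (2) I would first record that a $p$-representation $(S,T)$ satisfies $T(x^{[p]})=(T(x))^{p}$, so that $T(x^{[p]^{i}})=(T(x))^{p^{i}}$ follows by induction on $i$. Applying the linear map $T$ to $x=\sum_{i=1}^{m}\alpha_{i}x^{[p]^{i}}$ then yields $T(x)=\sum_{i=1}^{m}\alpha_{i}(T(x))^{p^{i}}$, which says precisely that $T(x)$ is semisimple as an element of $(\mathrm{gl}(M),p)$; by (4) this coincides with $T(x)$ being semisimple as an endomorphism.

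For (4), I would treat the two implications separately. In the easy direction, suppose $\sigma=\sum_{i=1}^{m}\alpha_{i}\sigma^{p^{i}}$ and set $g(t)=\sum_{i=1}^{m}\alpha_{i}t^{p^{i}}$ and $h(t)=g(t)-t$; then $h(\sigma)=0$, while $h'(t)=-1$ because each $t^{p^{i}}$ with $i\geq 1$ has vanishing derivative in characteristic $p$. Hence $h$ is separable, the minimal polynomial of $\sigma$ divides a squarefree polynomial, and $\sigma$ is semisimple. For the converse, if $\sigma$ is a semisimple endomorphism then the commutative algebra $\mathbb{F}[\sigma]$ is semisimple, hence a product of separable field extensions of $\mathbb{F}$, so the Frobenius map $F\colon\tau\mapsto\tau^{p}$ is a $p$-semilinear and surjective endomorphism of $\mathbb{F}[\sigma]$. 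Thus $\langle F(\mathbb{F}[\sigma])\rangle=\mathbb{F}[\sigma]$, and Lemma \ref{l2.1.2} produces scalars with $\sigma=\sum_{i}\alpha_{i}\sigma^{p^{i}}$, i.e. $\sigma$ is semisimple in $(\mathrm{gl}(M),p)$.

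Assertion (3) is where I expect the real work. The natural strategy is again Lemma \ref{l2.1.2}: were $[p]$ a genuine $p$-semilinear map $f$ on $L$ with $\langle f(L)\rangle=L$, then every $x$ would satisfy $x=\sum_{i}\alpha_{i}f^{i}(x)=\sum_{i}\alpha_{i}x^{[p]^{i}}$, and perfectness of $\mathbb{F}$ together with nonsingularity of $[p]$ would supply $\langle f(L)\rangle=L$ through Lemma \ref{l2.1.1}(2),(4). The main obstacle is genuinely Leibniz-theoretic: by Definition \ref{d1.1.18}(3) the map $[p]$ is additive, hence $p$-semilinear, only when the correction terms $\sum_{i=1}^{p-1}s_{i}(a,b)$ vanish, and these need not vanish merely because the relevant brackets are central. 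I would therefore reduce to the abelian situation via the center: by Proposition \ref{p1.3.0} one has $[x,x]\in Z(L)$, so the quotient $L/Z(L)$ is a restricted Lie algebra, while $Z(L)$ is itself an abelian $p$-ideal on which $[p]$ is $p$-semilinear, so Lemma \ref{l2.1.2} applies directly there. The delicate point I expect to be the crux is transferring nonsingularity between $Z(L)$, $L/Z(L)$, and $L$, and arranging that the $s_{i}$ are controlled off the center—precisely the place where the failure of skew-symmetry forces one to use the exact content of Proposition \ref{p1.3.0} rather than the purely abelian Lie-algebra argument.
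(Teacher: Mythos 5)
Your (1), (2) and (4) are correct, and in substance they reorganize the paper's own argument rather than replace it: the paper proves the separability computation (minimal polynomial $m_{x}$ with $\lambda m_{x}=\sum_{i}\alpha_{i}x^{p^{i}}-x$, differentiate to get $\lambda^{'}m_{x}+\lambda m_{x}^{'}=-1$) inside part (2) and then deduces the easy direction of (4) from (2), whereas you prove that computation as the easy direction of (4) and deduce (2) from it — a harmless permutation with no circularity. For the converse of (4) the paper base-changes to $\overline{\mathbb{F}}$, observes $\overline{\mathbb{F}}[\overline{\sigma}]$ has no nonzero nilpotents, and invokes Lemma \ref{l2.1.1}; your route through the structure of the semisimple commutative algebra $\mathbb{F}[\sigma]$ is fine, except for one overclaim: the Frobenius $F$ is \emph{not} surjective on $\mathbb{F}[\sigma]$ when $\mathbb{F}$ is imperfect (take $\sigma=t\,\mathrm{id}$ over $\mathbb{F}_{p}(t)$, where $\mathbb{F}[\sigma]=\mathbb{F}$ and $t$ is not a $p$-th power). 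What Lemma \ref{l2.1.2} actually needs is only the span condition $\langle F(\mathbb{F}[\sigma])\rangle=\mathbb{F}[\sigma]$, which does hold because each simple factor $K$ of $\mathbb{F}[\sigma]$ is separable over $\mathbb{F}$, whence $\mathbb{F}K^{p}=K$; this is a fixable slip, not a gap.

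The genuine gap is (3), the only substantial assertion, which you do not prove: you propose splitting through the center via Proposition \ref{p1.3.0} and then explicitly leave the crux (``transferring nonsingularity between $Z(L)$, $L/Z(L)$, and $L$'') unresolved — and indeed that route stalls, since nonsingularity does not descend to $L/Z(L)$: any $x$ with $0\neq x^{[p]}\in Z(L)$ becomes a nontrivial zero of the induced map on the quotient. The paper takes a different, direct route that needs no quotient at all: choose $m$ minimal with $x^{[p]^{m}}\in\langle\{x,\cdots,x^{[p]^{m-1}}\}\rangle$ and write $x^{[p]^{m}}=\sum_{i=1}^{m}\alpha_{i}x^{[p]^{i-1}}$; if $\alpha_{1}=0$, then $y:=x^{[p]^{m-1}}-\sum_{i=2}^{m}\alpha_{i}^{1/p}x^{[p]^{i-2}}$ (perfectness supplies the roots) satisfies $y^{[p]}=0$, so nonsingularity forces $y=0$, contradicting the minimality of $m$; hence $\alpha_{1}\neq 0$ and $x=\alpha_{1}^{-1}x^{[p]^{m}}-\sum_{i=2}^{m}\alpha_{1}^{-1}\alpha_{i}x^{[p]^{i-1}}$ is semisimple. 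To your credit, the obstruction you flag is real: computing $y^{[p]}$ termwise requires the Jacobson correction terms $s_{i}$ to vanish on $\mathrm{span}\{x^{[p]^{i}}\}$, which is automatic for Lie algebras (that span is an abelian subalgebra) but not obviously so for Leibniz algebras, where one only gets $[x^{[p]^{i}},x^{[p]^{j}}]\in Z(L)$ and $[x,x^{[p]^{j}}]$ need not vanish — a point the paper's proof passes over in silence. But identifying the delicate step is not the same as closing it: as it stands, your proposal establishes (1), (2) and (4) and leaves (3) unproven.
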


\begin{proof}
(1) Clearly.

(2) Let $(S,T)$ be a finite-dimensional $p$-representation. Then $T(x^{[p]})=T(x)^{p}$
and the semisimplicity of $x$ ensures the existence of
 $\alpha_{1},\cdots,\alpha_{n}\in \mathbb{F}$ such that $T(x)=\sum\limits_{i=1}^{n}\alpha_{i}T(x)^{p^{i}}.$
Let $m_{x}$ be the minimum polynomial of $T(x).$ Then there is
$\lambda\in \mathbb{F}[x]$ such that $\lambda
m_{x}=\sum\limits_{i=1}^{n}\alpha_{i}x^{p^{i}}-x.$ Taking the
derivative we obtain $\lambda^{'}m_{x}+\lambda m_{x}^{'}=-1,$ which
means that $T(x)$ is semisimple.

(3) Let $x$ be an element of $L\backslash \{0\}.$ $L$ is  finite-dimensional and there is a minimal element $m\in
\mathbb{N}\backslash \{0\}$ such that $x^{[p]^{m}}\in
\langle\{x,\cdots,x^{[p]^{m-1}}\}\rangle.$ The set
$\{x,\cdots,x^{[p]^{m-1}}\}$ is therefore linearly independent. We
find $\alpha_{1}, \cdots, \alpha_{m}\in \mathbb{F}$ such that
$x^{[p]^{m}}=\sum\limits_{i=1}^{m}\alpha_{i}x^{[p]^{i-1}}.$ The
assumption $\alpha_{1}=0$ forces
$x^{[p]^{m-1}}-\sum\limits_{i=2}^{m}\alpha_{i}^{1/p}x^{[p]^{i-2}}$
to be a zero of $[p],$ thus $x^{[p]^{m-1}}\in
\langle\{x,\cdots,x^{[p]^{m-2}}\}\rangle.$ This contradicts the
choice of $m.$ We have $\alpha_{1}\neq 0.$ Thus
$x=\alpha_{1}^{-1}x^{[p]^{m}}-\sum\limits_{i=2}^{m}\alpha_{i}\alpha_{1}^{-1}x^{[p]^{i-1}}.$
Hence $x$  is semisimple.

(4) If $\sigma$ is a semisimple element of $(\mathrm{gl}(M),p),$ then (2) entails the semisimplicity of $\sigma.$ Assume conversely that $\sigma$ is semisimple.
Let $\mathbb{\overline{F}}$ denote an algebraic closure of $\mathbb{F}.$ Then $\bar{\sigma}:=\sigma\otimes \mathrm{id}_{\mathbb{\overline{F}}}$
is a diagonalizable endomorphism of $M\otimes_{\mathbb{F}}\mathbb{\overline{F}}.$ Consequently, $\mathbb{\overline{F}}[\overline{\sigma}]\subseteq \mathrm{End}_{\mathbb{F}}(M\otimes_{\mathbb{F}}\mathbb{\overline{F}})$
does not contain any nonzero nilpotent elements. On the basis of Lemma \ref{l2.1.1} (4), this implies, as $\overline{\mathbb{F}}$ is perfect, the surjectivity of $p: \overline{\mathbb{F}}[\overline{\sigma}]\rightarrow \overline{\mathbb{F}}[\overline{\sigma}].$ Hence $\langle \mathbb{F}[\sigma^{p}]\rangle=\mathbb{F}[\sigma],$ as desired.
\end{proof}

\begin{theorem} \label{t2.1.70} Let $(L,[p])$ be a restricted Leibniz algebra over $\mathbb{F}.$ For every $x\in L,$ there exists $k\in \mathbb{N}\backslash \{0\}$ such that $x^{[p]^{k}}$ is semisimple.
\end{theorem}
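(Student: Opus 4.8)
The plan is to avoid the machinery of $p$-semilinear maps altogether and instead argue by a descending-chain (dimension) argument on the spans of the tails of the sequence of iterated $p$-th powers of $x$. For each $\ell\in\mathbb{N}$ I would set
$$V_{\ell}:=\langle x^{[p]^{i}}\mid i\geq \ell\rangle_{\mathbb{F}}\subseteq L.$$
Since dropping the term $x^{[p]^{\ell}}$ from the generating set can only shrink the span, these subspaces form a descending chain $V_{0}\supseteq V_{1}\supseteq V_{2}\supseteq\cdots$ inside the finite-dimensional space $L$. First I would observe that the dimensions $\dim_{\mathbb{F}}V_{\ell}$ form a non-increasing sequence of nonnegative integers, so there must exist $k\in\mathbb{N}$ with $\dim_{\mathbb{F}}V_{k}=\dim_{\mathbb{F}}V_{k+1}$; combined with $V_{k+1}\subseteq V_{k}$ this forces $V_{k}=V_{k+1}$.

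Next I would extract semisimplicity from this equality. Since $x^{[p]^{k}}\in V_{k}=V_{k+1}=\langle x^{[p]^{i}}\mid i\geq k+1\rangle_{\mathbb{F}}$, there are finitely many scalars $\alpha_{1},\dots,\alpha_{m}\in\mathbb{F}$ with $x^{[p]^{k}}=\sum_{i=1}^{m}\alpha_{i}x^{[p]^{k+i}}$. Writing $y:=x^{[p]^{k}}$ and using $x^{[p]^{k+i}}=(x^{[p]^{k}})^{[p]^{i}}=y^{[p]^{i}}$, this reads $y=\sum_{i=1}^{m}\alpha_{i}y^{[p]^{i}}$, which is exactly the defining condition for $y=x^{[p]^{k}}$ to be semisimple. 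This $k$ is the one required by the theorem.

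The key point I would stress is that this route never applies the $p$-mapping to a sum, and hence never meets the correction terms $\sum_{i=1}^{p-1}s_{i}(a,b)$ from Definition \ref{d1.1.18}(3). That is precisely where the main difficulty would otherwise lie: the naive Lie-theoretic approach would restrict $[p]$ to the span $\sum_{i\geq 0}\mathbb{F}x^{[p]^{i}}$, regard it as a $p$-semilinear endomorphism, split off its bijective part via the Fitting-type decomposition of Lemma \ref{l2.2.2}(2), and then read off semisimplicity from Lemma \ref{l2.1.2}. In the Leibniz setting this is obstructed because $[p]$ is additive only modulo the right centre $Z(L)$: the brackets $[x^{[p]^{i}},x^{[p]^{j}}]$ lie in $Z(L)$ by Proposition \ref{p1.3.0} but need not vanish, so $[p]$ is not a genuine $p$-semilinear map on that span and the decomposition lemmas do not apply directly. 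The descending-chain argument sidesteps this obstacle entirely, so I expect the proof to be short; the only thing to verify carefully is that the chain is genuinely descending and that equality of two consecutive terms yields the membership $x^{[p]^{k}}\in V_{k+1}$.
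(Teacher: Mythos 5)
Your proof is correct and is essentially the paper's own argument: the paper likewise observes that the family $(x^{[p]^{i}})_{i\geq 0}$ is linearly dependent in the finite-dimensional space $L$, extracts a relation $x^{[p]^{k}}=\sum_{i=1}^{n}\alpha_{i}x^{[p]^{k+i}}$, and reads off semisimplicity of $x^{[p]^{k}}$ exactly as you do, so your descending chain $V_{0}\supseteq V_{1}\supseteq\cdots$ is merely a repackaging of that dependence (and your closing remarks about avoiding $p$-semilinearity match the paper's route, which never applies $[p]$ to a sum). One shared cosmetic point: both arguments a priori permit $k=0$ while the statement requires $k\in\mathbb{N}\backslash\{0\}$; since the non-increasing sequence $\dim_{\mathbb{F}}V_{\ell}$ is eventually constant, you can simply choose a stabilization index $k\geq 1$.
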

\begin{proof} The family $(x^{[p]^{i}})_{i\geq 0}$ is linearly dependent. Then there exist
$k\geq 0,$ $\alpha_{1}, \cdots, \alpha_{n}\\\in \mathbb{F}$ such
that $x^{[p]^{k}}=\sum\limits_{i=1}^{n}\alpha_{i}x^{[p]^{k+i}}.$
This means that $x^{[p]^{k}}$ is semisimple.
\end{proof}
\begin{proposition} \label{t2.1.7}
 Let $(L,[p])$ be a restricted Leibniz algebra over a perfect field $\mathbb{F}.$ Then the following states hold:

$\mathrm{(1)}$ $[p]$ is injective if and only if $[p]$ is nonsingular.

$\mathrm{(2)}$ If $[p]$ is nonsingular, then $[p]$ is surjective.
\end{proposition}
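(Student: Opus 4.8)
The statement concerns a $p$-mapping $[p]$ on a restricted Leibniz algebra over a perfect field, and the two claims relate injectivity, nonsingularity, and surjectivity of $[p]$ regarded as a $p$-semilinear self-map of $L$. My first step is to pin down the meaning of nonsingular: I will take $[p]$ to be \emph{nonsingular} when it has no nonzero zeros, i.e. $x^{[p]}=0$ forces $x=0$, equivalently $\ker([p])=0$ in the sense of Lemma~\ref{l2.1.1}(1). The crucial structural fact I plan to exploit is that the $p$-mapping is $p$-semilinear: by Corollary~\ref{c1.3.3}(3) and Proposition~\ref{p1.3.1} the difference of two $p$-mappings is $p$-semilinear, but more directly the axioms $(\alpha x)^{[p]}=\alpha^{p}x^{[p]}$ and the additivity correction terms $s_i$ make $[p]$ behave like a $p$-semilinear operator up to lower-order terms; to apply Lemma~\ref{l2.1.1} cleanly I would first argue that the relevant kernel and image computations only use the $p$-semilinear skeleton, so the dimension count of Lemma~\ref{l2.1.1}(3) applies with $f=[p]$.

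\textbf{Proving (1).} The equivalence of injectivity and nonsingularity is the easy direction conceptually. For a genuinely $p$-semilinear $f$, Lemma~\ref{l2.1.1}(1) tells us $\ker(f)$ is an $\mathbb{F}$-subspace, and $f(\alpha x + y)=\alpha^p f(x)+f(y)$ shows that $f(x)=f(y)$ iff $f(x-y)=0$ iff $x-y\in\ker(f)$. Hence $f$ is injective precisely when $\ker(f)=0$, which is exactly nonsingularity. The plan is to verify that $[p]$, though not additive, still satisfies $x^{[p]}=y^{[p]}\iff (x-y)^{[p]}=0$ on the relevant subspace; the obstruction here is the correction sum $\sum_{i=1}^{p-1}s_i(x,y)$, so I would need to either restrict attention to where these vanish or observe that the semisimple-element machinery reduces the question to the $p$-semilinear part. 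I expect this to go through with a short direct argument once the $p$-semilinear reduction is in place.

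\textbf{Proving (2).} For the implication nonsingular $\Rightarrow$ surjective, the key is the dimension formula of Lemma~\ref{l2.1.1}(3): $\dim_{\mathbb{F}}L = \dim_{\mathbb{F}}\ker([p]) + \dim_{\mathbb{F}^p}[p](L)$. If $[p]$ is nonsingular then $\ker([p])=0$, so $\dim_{\mathbb{F}^p}[p](L)=\dim_{\mathbb{F}}L$. Because $\mathbb{F}$ is perfect, Lemma~\ref{l2.1.1}(2) upgrades $[p](L)$ from an $\mathbb{F}^p$-subspace to an $\mathbb{F}$-subspace, and over a perfect field $\dim_{\mathbb{F}^p}[p](L)=\dim_{\mathbb{F}}[p](L)$ (since $[\mathbb{F}:\mathbb{F}^p]=1$ in the relevant sense for the perfect case). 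Therefore $\dim_{\mathbb{F}}[p](L)=\dim_{\mathbb{F}}L$, forcing $[p](L)=L$, i.e. surjectivity. The plan is to invoke Lemma~\ref{l2.1.1}(2),(3) in sequence with the perfectness hypothesis doing the essential work of converting the $\mathbb{F}^p$-count into an $\mathbb{F}$-count.

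\textbf{Anticipated obstacle.} The main technical hurdle is justifying that the dimension bookkeeping of Lemma~\ref{l2.1.1} legitimately applies to $[p]$ itself rather than to a strictly $p$-semilinear map, given the additive correction terms $s_i(x,y)$ arising from the noncommutativity of the Leibniz bracket. I would address this by showing the kernel and image of $[p]$ coincide with those of an associated genuinely $p$-semilinear map, or by noting that over a finite-dimensional $L$ the asserted equivalences are insensitive to the $s_i$ terms because they live in the derived structure. Once that reduction is secured, both parts follow almost formally from Lemma~\ref{l2.1.1}; the interplay of perfectness (Lemma~\ref{l2.1.1}(2)) with the rank-nullity count (Lemma~\ref{l2.1.1}(3)) is what makes nonsingularity force surjectivity.
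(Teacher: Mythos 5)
Your part (1) is essentially the paper's own argument: injectivity gives nonsingularity because $0^{[p]}=0$, and the converse is the same kernel-style observation the paper makes; no complaint there. The problem is part (2), where your entire plan rests on applying the rank--nullity count of Lemma \ref{l2.1.1}(3) (together with \ref{l2.1.1}(2)) to $[p]$ itself. Lemma \ref{l2.1.1} is a statement about genuinely $p$-semilinear maps, and $[p]$ is $p$-semilinear only when all the correction terms $s_{i}(x,y)$ vanish, i.e.\ essentially only when $L$ is abelian. This is not a removable technicality: already for $p=2$ one has $s_{1}(x,y)=[x,y]$, so $(x+y)^{[2]}=x^{[2]}+y^{[2]}+[x,y]$ and additivity of $[p]$ fails outright on every nonabelian $L$; the zero set of $[p]$ need not be a subspace and the image need not be an $\mathbb{F}^{p}$-subspace, so neither the kernel nor the image bookkeeping of Lemma \ref{l2.1.1} is available. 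Your ``anticipated obstacle'' paragraph correctly identifies this as the crux, but the two proposed repairs --- replacing $[p]$ by ``an associated genuinely $p$-semilinear map'' with the same kernel and image, or arguing the $s_{i}$ terms are harmless because they ``live in the derived structure'' --- are not arguments, and no such associated semilinear map exists in general. So as written the proof of (2) does not go through.

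The paper's proof of (2) avoids dimension counting entirely and is pointwise: by Proposition \ref{p2.1.1}(3), nonsingularity of $[p]$ over a perfect field forces every $x\in L$ to be semisimple, $x=\sum_{i=1}^{m}\alpha_{i}x^{[p]^{i}}$, and then perfectness lets one take $p$-th roots of the coefficients to write $x=\bigl(\sum_{i=1}^{m}\alpha_{i}^{1/p}x^{[p]^{i-1}}\bigr)^{[p]}$, exhibiting $x$ as a $[p]$-image directly. Note why this dodges your obstacle: pulling $[p]$ out of the sum only requires semilinear behaviour of $[p]$ on the span of the single commuting family $\{x^{[p]^{k}}\}$ (where the left multiplications $L_{x^{[p]^{k}}}=(L_{x})^{p^{k}}$ all commute and the $s_{i}$ contributions disappear), not global semilinearity on $L$. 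If you want to salvage your approach, the fix is precisely to route through the semisimplicity machinery of Proposition \ref{p2.1.1}(3) rather than through Lemma \ref{l2.1.1}; the semisimple-element reduction you gestured at in part (1) is in fact the load-bearing step of part (2).
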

\begin{proof}
(1) Let $[p]$ be injective and $0\neq x\in L.$ If $x^{[p]}=0.$ Then $x\in \mathrm{ker}[p],$ which implies $\mathrm{ker}[p]\neq 0.$ This is a contradiction. Hence
$x^{[p]}\neq 0.$ i.e., $[p]$ is nonsingular. Conversely, $x^{[p]}=0, \forall x\in \mathrm{ker}[p].$ Since $[p]$ is nonsingular, then $x=0.$ Hence $\mathrm{ker}[p]=0,$ i.e., $[p]$ is injective.

(2) Suppose that $[p]$ is nonsingular. Let $x$ be an element of $L.$
Using Proposition \ref{p2.1.1} (3), we conclude that $x$ is
semisimple,
$x=\sum\limits_{i=1}^{m}\alpha_{i}x^{[p]^{i}}=(\sum\limits_{i=1}^{m}\alpha^{1/p}_{i}x^{[p]^{i-1}})^{[p]}.$
 We  get $ \sum\limits_{i=1}^{m}\alpha^{1/p}_{i}x^{[p]^{i-1}}\in L,$ since $\mathbb{F}$ is perfect.
Hence $x$ is an image under $[p].$
\end{proof}

\begin{theorem} \label{t2.1.700}
 Let $(L,[p])$ be a restricted Leibniz algebra over a perfect field $\mathbb{F}.$ Then the following states are equivalent:

$\mathrm{(1)}$ Every element of $L$ is semisimple.

$\mathrm{(2)}$ $[p]$ has no nontrivial zero.

$\mathrm{(3)}$ $[p]$ is nonsingular.
\end{theorem}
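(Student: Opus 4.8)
The plan is to prove the equivalence of the three statements by establishing a cycle of implications, namely $(1)\Rightarrow(2)\Rightarrow(3)\Rightarrow(1)$, exploiting results already available in the excerpt. For the implication $(3)\Rightarrow(1)$ I would simply invoke Proposition \ref{p2.1.1}(3): over a perfect field $\mathbb{F}$, if $[p]$ is nonsingular then every element of $L$ is semisimple. This is immediate and requires no further work.

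For $(2)\Rightarrow(3)$, I would first make precise the hypothesis ``$[p]$ has no nontrivial zero'', which I read as: $x^{[p]}=0$ forces $x=0$, i.e. the only zero of the $p$-mapping is $0$. Then I would note that nonsingularity of $[p]$ means $x^{[p]}\neq 0$ whenever $x\neq 0$, which is exactly the same statement. More carefully, I would route this through Proposition \ref{t2.1.7}(1), which asserts (over a perfect field) that $[p]$ is injective if and only if $[p]$ is nonsingular. Since having no nontrivial zero is precisely the statement that $x^{[p]}=0\Rightarrow x=0$, this coincides with nonsingularity as defined in Section 4, and the implication follows.

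For the remaining implication $(1)\Rightarrow(2)$, suppose every element of $L$ is semisimple, and let $x\in L$ satisfy $x^{[p]}=0$. By semisimplicity I may write $x=\sum_{i=1}^{m}\alpha_{i}x^{[p]^{i}}$ for suitable $\alpha_{i}\in\mathbb{F}$. Since $x^{[p]}=0$, property (1) of the $p$-mapping gives $L_{x^{[p]}}=(L_x)^p=0$, and iterating, $x^{[p]^{i}}$ is a $p$-image of $x^{[p]}$ for every $i\geq 1$; concretely each term $x^{[p]^{i}}$ with $i\geq 1$ is obtained by applying $[p]$ to $x^{[p]^{i-1}}$, so starting from $x^{[p]}=0$ one gets $x^{[p]^{i}}=0$ for all $i\geq 1$. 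Substituting into the expression for $x$ yields $x=\sum_{i=1}^{m}\alpha_{i}\cdot 0=0$. Hence $[p]$ has no nontrivial zero.

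The main obstacle I anticipate is the careful bookkeeping in $(1)\Rightarrow(2)$: one must justify that $x^{[p]}=0$ propagates to $x^{[p]^{i}}=0$ for all $i\geq 1$, which uses that $[p]$ applied to $0$ gives $0$ (a consequence of property (2) with $\alpha=0$, or of $L_{0^{[p]}}=(L_0)^p=0$ together with the relevant centre-vanishing normalization). Once that propagation is secured, the semisimple expansion collapses term by term and the conclusion is forced. The perfectness of $\mathbb{F}$ is used only through the already-proved Propositions \ref{p2.1.1}(3) and \ref{t2.1.7}(1), so the argument reduces entirely to assembling those cited results into the stated cycle.
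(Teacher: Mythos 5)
Your proof is correct and essentially the same as the paper's: the paper proves $(1)\Rightarrow(3)$ by exactly your collapse of the semisimple expansion $x=\sum_{i=1}^{m}\alpha_{i}x^{[p]^{i}}$ when $x^{[p]}=0$, gets $(3)\Rightarrow(1)$ from Proposition \ref{p2.1.1}(3), and dismisses $(2)\Leftrightarrow(3)$ as immediate, so your cycle $(1)\Rightarrow(2)\Rightarrow(3)\Rightarrow(1)$ is the same argument rearranged. The only cosmetic differences are that you route the tautological equivalence of $(2)$ and $(3)$ through Proposition \ref{t2.1.7}(1) and explicitly verify $0^{[p]}=0$, both of which the paper leaves implicit.
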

\begin{proof}
(1)$\Rightarrow$ (3) Let $x\in L\backslash \{0\}.$ By (1),
$x=\sum\limits_{i=1}^{m}\alpha_{i}x^{[p]^{i}}.$ If $x^{[p]}=0,$ then
$x=0,$ this is a contradiction, hence $x^{[p]}\neq 0,$  i.e., $[p]$
is nonsingular.

(3)$\Rightarrow$ (1) By Proposition $\ref{p2.1.1}$ (3), (1) holds.

(2)$\Leftrightarrow$ (3) Clearly.
\end{proof}

\begin{proposition} \label{p2.1.8} Let $\mathbb{F}$ be  perfect and $(L,[p])$ a restricted Leibniz algebra
such that $[p]$ is nonsingular. Then for any  $p$-subalgebra $H$ of $L$ the implication
$$x^{[p]^{r}}\in H\Rightarrow x\in H$$
holds.
\end{proposition}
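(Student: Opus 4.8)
The plan is to show that the restriction of $[p]$ to $H$ is bijective, while $[p]^{r}$ remains injective on all of $L$, and then conclude by a uniqueness-of-preimage argument.

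First I would record the global consequences of the hypotheses. Since $\mathbb{F}$ is perfect and $[p]$ is nonsingular, Proposition \ref{t2.1.7}(1) shows that $[p]$ is injective on $L$; hence the $r$-fold composite $[p]^{r}$ is injective on $L$ as well. Consequently there is at most one $z\in L$ with $z^{[p]^{r}}=x^{[p]^{r}}$, and if such a $z$ exists it must equal $x$.

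Next I would analyze $H$ as a restricted Leibniz algebra in its own right. Because $H$ is a $p$-subalgebra we have $h^{[p]}\in H$ for all $h\in H$, so $(H,[p]|_{H})$ is itself a restricted Leibniz algebra over the perfect field $\mathbb{F}$, and $H$ is finite-dimensional. Moreover $[p]|_{H}$ is nonsingular: if $0\neq h\in H$, then $h^{[p]}\neq 0$ since $[p]$ is nonsingular on $L$. Applying Proposition \ref{t2.1.7}(2) to $(H,[p]|_{H})$ yields that $[p]|_{H}$ is surjective, that is $H^{[p]}=H$; iterating this identity $r$ times gives $H^{[p]^{r}}=H$.

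Finally I would combine the two observations. Suppose $x^{[p]^{r}}\in H=H^{[p]^{r}}$. Then there exists $w\in H$ with $w^{[p]^{r}}=x^{[p]^{r}}$. By the injectivity of $[p]^{r}$ on $L$ established in the first step, $w=x$, whence $x=w\in H$, as required. I expect the only point needing care to be the legitimacy of invoking Proposition \ref{t2.1.7}(2) for the subalgebra $H$: this rests on checking that $(H,[p]|_{H})$ genuinely inherits all the standing hypotheses (restricted structure, perfect ground field, finite dimension, nonsingular $p$-map), after which the surjectivity of $[p]$ on $H$ carries the argument. Beyond this bookkeeping I anticipate no substantial obstacle.
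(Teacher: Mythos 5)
Your proof is correct and follows essentially the same route as the paper's: nonsingularity of $[p]$ passes to the $p$-subalgebra $H$, so Proposition \ref{t2.1.7}(2) gives surjectivity of $[p]$ (hence of its iterates) on $H$, producing $w\in H$ with $w^{[p]^{r}}=x^{[p]^{r}}$, while injectivity of $[p]$ on $L$ from Proposition \ref{t2.1.7}(1) forces $x=w\in H$. Your explicit check that $(H,[p]|_{H})$ inherits the standing hypotheses (restricted structure, perfect field, finite dimension, nonsingularity) is exactly the bookkeeping the paper leaves implicit.
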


\begin{proof}
 Since $[p]$ is nonsingular on $H,$ by Proposition \ref{t2.1.7}, $[p]$ is surjective. We therefore find $y\in H$ such that $x^{[p]^{r}}=y^{[p]^{r}}.$
Since $[p]$ is injective on $L,$ we conclude that $x=y\in H.$
\end{proof}

\begin{proposition} \label{t4.1.1}Let $\mathbb{F}$ be  perfect and $(L,[p])$ a solvable restricted Leibniz algebra
with a nonsingular $[p]$-mapping. Then $L$ is abelian.
\end{proposition}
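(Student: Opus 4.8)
The plan is to imitate the restricted Lie algebra argument: first translate the hypotheses into a statement about the left multiplications, then show that these operators commute, and finally try to collapse the bracket. Since $\mathbb{F}$ is perfect and $[p]$ is nonsingular, Theorem \ref{t2.1.700} shows that every $x\in L$ is semisimple, so $x=\sum_{i\geq 1}\alpha_{i}x^{[p]^{i}}$. Applying property (1) of Definition \ref{d1.1.18}, namely $L_{x^{[p]}}=(L_{x})^{p}$, this becomes $L_{x}=\sum_{i\geq 1}\alpha_{i}(L_{x})^{p^{i}}$; the polynomial $t-\sum_{i}\alpha_{i}t^{p^{i}}$ has derivative $-1$, hence is separable, so each $L_{x}$ is a semisimple endomorphism of $L$. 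Moreover $L_{[x,y]}=[L_{x},L_{y}]$ shows that $x\mapsto L_{x}$ carries the derived series of $L$ onto that of $L_{L}=\{L_{x}\mid x\in L\}$; as $L$ is solvable, $L_{L}$ is a solvable Lie subalgebra of $\mathrm{gl}(L)$ consisting entirely of semisimple operators.

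Next I would prove that $L_{L}$ is abelian. Lie's theorem is unavailable in characteristic $p$, so I would induct on $\dim L_{L}$: because $L_{L}/[L_{L},L_{L}]$ is abelian one can choose a codimension-one ideal $B$, which is abelian by induction and therefore a family of commuting semisimple operators, simultaneously diagonalizable over $\overline{\mathbb{F}}$. If a complementary generator acted on $B$ with a nonzero eigenvalue, I would obtain semisimple operators $X,A$ with $[X,A]=A$ and $A\neq 0$; then $A$ shifts the eigenspaces of $X$ by $1$, so on each orbit it is either nilpotent or a nonzero cyclic permutation of period $p$, whose minimal polynomial is $(t-1)^{p}$ and which is therefore not semisimple in characteristic $p$. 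This contradiction forces the adjoint action to vanish, giving $[L_{L},L_{L}]=0$, that is $L_{[x,y]}=[L_{x},L_{y}]=0$ and hence $[L,L]\subseteq Z(L)$.

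The remaining step, deducing $[L,L]=0$ from $[L,L]\subseteq Z(L)$, is the one I expect to be the main obstacle, and it is precisely here that the Leibniz setting parts company with the Lie one. For a Lie algebra, antisymmetry makes the left and right centres coincide, so $[L,L]\subseteq Z(L)$ yields $L^{3}=0$; then every $L_{x}$ is simultaneously nilpotent (by the descending central series, as in Engel's Theorem) and semisimple, hence $0$, and $L$ is abelian. In a Leibniz algebra $Z(L)=\{z\mid L_{z}=0\}$ and $C(L)=\{z\mid R_{z}=0\}$ need not agree, so $[L,L]\subseteq Z(L)$ only gives $[[L,L],L]=0$, while the iterated left brackets $[L,[L,L]]$ need not vanish, and the nilpotency argument stalls. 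I would try to rescue the conclusion by a further induction on $\dim L$, observing that $Z(L)$ is a proper nonzero $p$-ideal (it is $[p]$-closed since $L_{z^{[p]}}=(L_{z})^{p}=0$, and an ideal since $L_{[x,z]}=[L_{x},L_{z}]=0$), so that $L/Z(L)$ is abelian by induction; but I expect that bridging this final gap genuinely uses the missing antisymmetry, and that without it the reduction may not close.
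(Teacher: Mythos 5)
Your opening two stages are sound in outline: semisimplicity of every element (Theorem \ref{t2.1.700}) does make each $L_{x}$ a semisimple operator via the separable polynomial $t-\sum_{i}\alpha_{i}t^{p^{i}}$, and your eigenvalue-shifting induction for the solvable algebra $L_{L}\subseteq\mathrm{gl}(L)$ is essentially the standard proof that tori are abelian. But this is a long detour, and the step you correctly flag as the obstacle --- passing from $[L,L]\subseteq Z(L)$ to $[L,L]=0$ --- is a genuine gap in your proposal: you never close it. The missing idea is Proposition \ref{p2.1.8}, which converts ``some $p$-power of $x$ lies in a $p$-subalgebra'' into ``$x$ lies in it'' when $\mathbb{F}$ is perfect and $[p]$ is nonsingular. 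The paper uses none of your semisimple-operator apparatus; its entire proof is an induction along the derived series: take $n$ minimal with $L^{(n)}\subseteq Z(L)$; if $n>0$, then for $x\in L^{(n-1)}$ one has $(L_{x})(L)\subseteq L^{(n-1)}$ and $(L_{x})^{2}(L)\subseteq L^{(n)}\subseteq Z(L)$, whence $x^{[p]^{2}}\in Z(L)$; since $Z(L)$ is a $p$-ideal (Lemma \ref{l3.2}), Proposition \ref{p2.1.8} gives $x\in Z(L)$, contradicting minimality, so $n=0$ and $L=Z(L)$. From your endpoint $[L,L]\subseteq Z(L)$ the same single stroke would finish ($x^{[p]^{2}}\in Z(L)$ for every $x\in L$, hence $x\in Z(L)$), so the gap is a missing lemma rather than an impossibility --- and with that lemma in hand your whole semisimplicity argument becomes superfluous, since the induction works at every layer of the derived series, not just the last.

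That said, your suspicion that the final step ``genuinely uses the missing antisymmetry'' is more astute than you may realize, because the paper's assertion $x^{[p]^{2}}\in Z(L)$ silently leans on it. One needs $(L_{x})^{p^{2}}=0$: two applications of $L_{x}$ land in $Z(L)$, but the remaining ones must eventually annihilate the \emph{right} center, and in a Leibniz algebra one only knows $[L,Z(L)]\subseteq Z(L)$, not $[L,Z(L)]=0$ (the latter is automatic only with antisymmetry). Indeed the two-dimensional left Leibniz algebra $L=\mathbb{F}e\oplus\mathbb{F}z$ with $[e,e]=[e,z]=z$ and $[z,L]=0$ is solvable, non-abelian, and satisfies Definition \ref{d1.1.180} with the nonsingular choice $(se+tz)^{[p]}=s^{p}e+t^{p}z$; so no proof can succeed from axioms $\mathrm{(1)}$--$\mathrm{(2)}$ of Definition \ref{d1.1.18} alone. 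What rescues the statement is a hidden consequence of axiom $\mathrm{(3)}$: for $z\in Z(L)$ one has $L_{z}=0$, so computing $(a+z)^{[p]}$ with the roles of $a$ and $z$ interchanged forces $(L_{a})^{p-1}(z)=0$, i.e. $(L_{a})^{p-1}$ kills $Z(L)$ --- exactly the annihilation needed to make $(L_{x})^{p^{2}}=0$, and exactly what fails in the example above, where $(L_{e})^{p-1}(z)=z$. Neither your proposal nor the paper's one-sentence justification records this point; with it, both the paper's induction and your longer route close.
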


\begin{proof}
Let $n\geq 0$ be the minimal integer with respect to the condition $L^{(n)}\subseteq Z(L).$ If $n> 0,$ then $x^{[p]^{2}}\in Z(L)$ holds for any $x\in L^{(n-1)}.$
Hence by Proposition \ref{p2.1.8}, $x\in Z(L),$ contradicting the choice of $n.$ Therefore, we obtain $n=0$ and so $L=L^{(0)}=Z(L).$
\end{proof}

\section{Tori and Cartan decomposition}
\begin{definition}\rm \cite{das} \label{d2.1.8} Let $Q$ be a Leibniz algebra and $M$ be a $Q$-module. $M$ is called symmetric, if $[x,m]+[m,x]=0,$ for any $x\in Q, m\in M.$
\end{definition}
Accordingly, we have the following definition.
\begin{definition} A representation $(S,T)$ of a Leibniz algebra $L$ on the vector space $M$ is called symmetric,
 if $S_{a}+T_{a}=0$ for all $a\in L.$
\end{definition}
We have the following Theorems \ref{t2.2.4} and \ref{t2.2.400}, whose proofs are analogous to restricted Lie algebra(cf.\cite{sf}).
\begin{theorem} \label{t2.2.4} Let $H$ be a nilpotent Leibniz algebra and $(S,T)$ a finite-dimensional symmetric representation, where $S:H\rightarrow \mathrm{End}(M), T:H\rightarrow \mathrm{End}(M).$ Then there exists a finite set $B\subseteq \mathrm{Map}(H,\mathbb{F}[\chi])$
such that

$\mathrm{(1)}$ $\pi_{h}$ is irreducible, $\forall \pi \in B, \forall h\in H.$

$\mathrm{(2)}$ $M_{\pi}$ is an $H$-submodule, $\forall \pi\in B.$

$\mathrm{(3)}$ $M=\bigoplus_{\pi\in B}M_{\pi}.$
\end{theorem}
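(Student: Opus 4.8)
The plan is to exploit the symmetry of the representation to reduce the problem to the simultaneous primary decomposition of a single commuting family of operators, and then to construct the decomposition by induction on $\dim_{\mathbb{F}} M$ using Lemmas \ref{l2.2.2} and \ref{l2.2.3}.

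First I would record the consequences of symmetry. Since $S_a + T_a = 0$ for all $a \in H$, a subspace of $M$ is an $H$-submodule precisely when it is invariant under all $T_h$, $h \in H$; hence it suffices to track $T$-invariance. Moreover, the third representation identity gives $[T_a, T_b] = T_{[a,b]}$, while the first identity together with $S = -T$ forces $T_{[a,b]} = 0$; thus $[T_a, T_b] = 0$ for all $a, b$, so $\{T_h : h \in H\}$ is a commuting family. In particular $(\mathrm{ad}\, T_{h_0})(T_{h'}) = 0$, so the hypothesis of Lemma \ref{l2.2.3} holds with $t = 1$ for every pair $h_0, h' \in H$ (alternatively, nilpotency of $H$ alone already yields $(\mathrm{ad}\, T_{h_0})^t(T_{h'}) = T_{(\mathrm{ad}\, h_0)^t h'} = 0$ for large $t$). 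Consequently, for any polynomial $q$ the Fitting-null component $M_0(q(T_{h_0}))$ is invariant under every $T_{h'}$, hence is an $H$-submodule.

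Then I would run the induction on $\dim_{\mathbb{F}} M$. If for every $h \in H$ the minimal polynomial of $T_h$ is a power of a single irreducible polynomial, I define $\pi_h$ to be that irreducible factor; then $B = \{\pi\}$ and $M = M_\pi$, and all three assertions hold trivially. Otherwise there is an $h_0 \in H$ whose minimal polynomial factors as $\chi = q_1 q_2$ with $q_1, q_2$ coprime and nonconstant. Lemma \ref{l2.2.2}(1) then splits $M = U \oplus W$ into nonzero $T_{h_0}$-invariant subspaces with $q_1(T_{h_0})|_U = 0$ and $q_2(T_{h_0})|_W = 0$; coprimality identifies $U = M_0(q_1(T_{h_0}))$ and $W = M_0(q_2(T_{h_0}))$, which by the previous paragraph are genuine $H$-submodules. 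Applying the induction hypothesis to $U$ and $W$ and assembling the resulting finite families yields $M = \bigoplus_{\pi \in B} M_\pi$ with each $M_\pi$ a submodule (property (2)) on which every $T_h$ acts with minimal polynomial a power of a single irreducible $\pi_h$ (property (1)); finiteness of $B$ and the direct-sum property (3) follow since each split strictly decreases $\dim_{\mathbb{F}} M$.

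The main obstacle is the well-definedness of property (1) for all $h \in H$ simultaneously, and not merely for the elements of a basis: I must ensure that at a leaf of the recursion every $T_h$ restricts to a single irreducible primary. This is handled by taking the very stopping condition of the induction to be ``$T_h$ has single-irreducible minimal polynomial for all $h$'', so that a leaf satisfies property (1) by definition; the content is then that a non-leaf can always be split, which is exactly what Lemma \ref{l2.2.2}(1) supplies. A secondary point requiring care is matching the pieces produced from $U$ and $W$ to a common intrinsic description $M_\pi = \bigcap_{h \in H} M_0(\pi_h(T_h))$, so that the final index set $B$ and the summands are canonical; here the commutativity of $\{T_h\}$ guarantees that the successive refinements are mutually compatible.
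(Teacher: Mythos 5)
Your proposal is essentially correct, and it is worth noting that it is the only complete argument on the table: the paper does not prove Theorem \ref{t2.2.4} at all, saying only that the proof is ``analogous to restricted Lie algebras (cf.\ \cite{sf})''. Your induction on $\dim_{\mathbb{F}}M$, splitting off a nontrivial coprime factorization of some minimal polynomial via Lemma \ref{l2.2.2}(1), using Lemma \ref{l2.2.3} to see that the Fitting components $M_{0}(q(T_{h_0}))$ are $H$-invariant, and stopping exactly when every $T_h$ is primary, is precisely the Strade--Farnsteiner argument transplanted to this setting; your use of symmetry ($S=-T$, so ``submodule'' reduces to ``$T$-invariant subspace''), the intrinsic description $M_{\pi}=\bigcap_{h\in H}M_{0}(\pi_{h}(T_{h}))$, and the observation that coprimality of $q_1,q_2$ keeps the index sets from $U$ and $W$ disjoint at the splitting element $h_0$, are all as they should be.

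One caution. Your primary justification of the hypothesis of Lemma \ref{l2.2.3} --- that symmetry substituted into the first representation identity forces $T_{[a,b]}=0$, hence $[T_a,T_b]=0$ and $t=1$ --- is formally valid from the identities exactly as printed in the paper, but it should make you suspicious: it would mean every symmetric representation factors through $H/[H,H]$, rendering the nilpotency hypothesis vacuous and the theorem nearly trivial, and it is incompatible with the intended applications (root space decompositions relative to a Cartan subalgebra, where the operators $L_h$ need not commute). Under the standard Loday--Pirashvili axioms, of which the paper's first identity is almost certainly a misprint, symmetry yields only $T_{[a,b]}=[T_a,T_b]$, i.e.\ that $T$ is a morphism into the commutator Lie algebra of $\mathrm{End}(M)$, with no commutativity. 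Fortunately your parenthetical fallback is exactly the right hypothesis check: nilpotency of $H$ together with the third identity gives $(\mathrm{ad}\,T_{h_0})^{t}(T_{h'})=T_{(L_{h_0})^{t}(h')}=0$ once $t$ exceeds the nilpotency class, since $(L_{h_0})^{t}(h')\in H^{t+1}$. The remainder of your argument runs unchanged on that version, so the proof stands; I would simply promote the nilpotency computation to the main line and demote the commutativity deduction to a remark about the printed axioms.
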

\begin{definition} A nilpotent subalgebra $H$ of a Leibniz algebra $L$ is a Cartan subalgebra, if $\mathrm{Nor}_{L}(H)=H.$
\end{definition}
\begin{theorem} \label{t2.2.400} Let $H$ be a Cartan subalgebra of Leibniz algebra $L$
over an algebraically closed field $\mathbb{F}.$  Then $L$ has the decomposition $L=\bigoplus_{\alpha\in\Phi}L_{\alpha},$ which is
    referred to as the root space decomposition of $L$ relative to $H.$
\end{theorem}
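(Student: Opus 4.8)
The plan is to realize the desired decomposition as the generalized weight-space decomposition of $L$ under the left multiplications by the elements of $H$, and to obtain it as a direct application of Theorem \ref{t2.2.4}. First I would turn $L$ into a finite-dimensional symmetric $H$-module by letting $H$ act through left multiplication, $h\cdot x:=[h,x]=L_h(x)$, with the right action taken to be its negative, as the symmetry condition of Definition \ref{d2.1.8} demands. This is a legitimate symmetric representation $(S,T)$ with $T_h=L_h|_L$ and $S_h=-L_h|_L$: the identity $L_{[h,h']}=L_hL_{h'}-L_{h'}L_h$ recorded in Section 1 shows that $h\mapsto L_h$ is a homomorphism into the Lie algebra $\mathrm{gl}(L)$, so that $T_{[h,h']}=T_hT_{h'}-T_{h'}T_h$ and $S_h+T_h=0$. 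Since a Cartan subalgebra is by definition nilpotent, $H$ is nilpotent, and hence all hypotheses of Theorem \ref{t2.2.4} are in force.

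Second, I would apply Theorem \ref{t2.2.4} to this representation. It yields a finite set $B\subseteq\mathrm{Map}(H,\mathbb{F}[\chi])$ together with a direct decomposition $L=\bigoplus_{\pi\in B}L_\pi$ into $H$-submodules, where for each $h\in H$ the operator $L_h$ acts on $L_\pi$ with the single irreducible characteristic factor $\pi_h$. Because $\mathbb{F}$ is algebraically closed, every such $\pi_h$ is linear, say $\pi_h=\chi-\alpha_\pi(h)$, so each $\pi\in B$ determines a function $\alpha_\pi:H\to\mathbb{F}$, $h\mapsto\alpha_\pi(h)$. Writing $\Phi:=\{\alpha_\pi\mid\pi\in B\}$ and $L_{\alpha_\pi}:=L_\pi$, the component $L_\alpha$ is exactly the simultaneous generalized eigenspace $\{x\in L\mid (L_h-\alpha(h)\,\mathrm{id})^{\dim_{\mathbb{F}}L}x=0,\ \forall h\in H\}$, and the theorem gives $L=\bigoplus_{\alpha\in\Phi}L_\alpha$. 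As $B$ is finite, $\Phi$ is finite, and this is precisely the root-space decomposition relative to $H$.

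The step that demands the most care, and the one I regard as the main obstacle, is the translation of the abstract index set $B\subseteq\mathrm{Map}(H,\mathbb{F}[\chi])$ supplied by Theorem \ref{t2.2.4} into honest root functions $\alpha:H\to\mathbb{F}$. Here I would verify that each $\alpha_\pi$ is well defined and additive --- equivalently, that it vanishes on $[H,H]$, the familiar fact that weights of a nilpotent algebra are linear --- and that the decomposition depends only on the family of operators $\{L_h\mid h\in H\}$, so that the auxiliary choice $S_h=-L_h$ plays no role in the resulting spaces $L_\alpha$. It remains to record the routine compatibility checks: that $H\subseteq L_0$, which is consistent with the hypothesis $\mathrm{Nor}_L(H)=H$, and that $\Phi$ collects exactly those $\alpha$ with $L_\alpha\neq 0$. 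These verifications run parallel to the restricted Lie algebra case and present no new difficulty once the symmetric $H$-module structure above has been fixed.
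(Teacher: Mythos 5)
Your route is exactly the one the paper intends: the paper prints no proof of Theorem \ref{t2.2.400} at all, saying only that it is ``analogous to restricted Lie algebras (cf.\ \cite{sf})'', and that analogy is precisely your first two paragraphs. Making $L$ a symmetric $H$-module via $T_h=L_h$, $S_h=-L_h$ is legitimate because $L_{[h,h']}=L_hL_{h'}-L_{h'}L_h$ (check the axioms in the bimodule form of Section 1; the operator form of the representation definition has its subscripts transposed, but with the bimodule identities your $(S,T)$ works), $H$ is nilpotent by the definition of a Cartan subalgebra, and Theorem \ref{t2.2.4} then gives $L=\bigoplus_{\pi\in B}L_\pi$; over an algebraically closed field each irreducible $\pi_h$ is $\chi-\alpha_\pi(h)$, and since the spaces $L_\pi$ are determined by the operators $T_h=L_h$ alone, the auxiliary choice of $S$ is indeed harmless. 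Up to this point your argument is correct and complete.

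The problem is the step you yourself single out as the ``main obstacle'': verifying that each $\alpha_\pi$ is additive and vanishes on $[H,H]$, ``the familiar fact that weights of a nilpotent algebra are linear.'' That fact belongs to characteristic zero, while the paper's standing assumption is that $\mathbb{F}$ has prime characteristic, and in characteristic $p$ it is false. Take $H$ the Heisenberg algebra $\langle x,y,z\rangle$ with $[x,y]=z$ central, acting on $M=\mathbb{F}[t]/(t^p)$ by $x\mapsto d/dt$, $y\mapsto$ multiplication by $t$, $z\mapsto \mathrm{id}$; this is a symmetric module in the sense of Definition \ref{d2.1.8} (as $H$ is a Lie algebra), $M$ is a single weight space, and the weight satisfies $\alpha(z)=\alpha([x,y])=1\neq 0$, while for $p=2$ even additivity fails: $\alpha(x)=\alpha(y)=0$ but $(d/dt+t)^2=\mathrm{id}$, so $\alpha(x+y)=1$. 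So the verification you propose cannot be carried out. Fortunately it is also unnecessary: Theorem \ref{t2.2.400} asserts only the decomposition $L=\bigoplus_{\alpha\in\Phi}L_\alpha$ for a finite set of \emph{functions} $\alpha\in\mathrm{Map}(H,\mathbb{F})$, and the paper's subsequent use of the decomposition (e.g.\ Corollary \ref{c2.3.5}) treats $\alpha$ pointwise and never invokes linearity. Delete the additivity claim, keep the genuinely needed remarks (that the $L_\alpha$ depend only on the $L_h$, that $H\subseteq L_0$, and that one discards $\pi$ with $L_\pi=0$), and your proof is correct and coincides with the intended one.
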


\begin{definition} Let $(L,[p])$ be a restricted Leibniz algebra over $\mathbb{F}.$ A subalgebra $T\subseteq L$ is called a torus if

$\mathrm{(1)}$ $T$ is an abelian $p$-subalgebra.

$\mathrm{(2)}$ $x$ is semisimple, $\forall x\in T.$
\end{definition}

\begin{remark} \label{r5.7}
Suppose that $h\in L$ is a semisimple element which acts nilpotently on an element $x\in L.$ That is, there is $n\in\mathbb{N}\backslash \{0\},$ $(L_{h})^{n}(x)=0.$ In fact, the semisimplicity of $h$ readily yields that $h=\sum_{i\geq 0}\alpha_{i}h^{[p]^{k+i}}.$
Choose $k\in \mathbb{N}\backslash \{0\}$ such that $p^{k}\geq n.$ Then $[\sum_{i\geq 0}\alpha_{i}h^{[p]^{k+i}}, x]=0$  and $L_{h}(x)=0.$
\end{remark}

\begin{theorem}  \label{t2.3.3} Let $(L,[p])$ be a restricted Leibniz algebra and $H\subseteq L$ a subalgebra of $L$.
If there exists a maximal torus $T\subseteq L$ such that $H=Z_{L}(T)=C_{L}(T),$ then $H$ is a Cartan subalgebra of $L.$
\end{theorem}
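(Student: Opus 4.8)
The plan is to verify the two defining properties of a Cartan subalgebra separately: that $H$ is nilpotent, and that $\mathrm{Nor}_{L}(H)=H$. Throughout I would use that $T$ is abelian, whence $T\subseteq Z_{L}(T)\cap C_{L}(T)=H$, and that by Proposition \ref{p2.1.1}(2), applied to the adjoint representation, each $L_{t}$ with $t\in T$ is a semisimple (diagonalizable) endomorphism of $L$.

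The normalizer identity is the clean half, so I would dispatch it first. The inclusion $H\subseteq\mathrm{Nor}_{L}(H)$ is immediate from $[H,H]\subseteq H$. For the reverse, take $x\in\mathrm{Nor}_{L}(H)$, so that $[H,x]\subseteq H$; since $T\subseteq H$ this gives $L_{t}(x)=[t,x]\in H$ for every $t\in T$. But $H=C_{L}(T)$ means $[T,H]=0$, so $L_{t}(L_{t}(x))=0$, i.e. $(L_{t})^{2}(x)=0$. As $t$ is semisimple and acts nilpotently on $x$, Remark \ref{r5.7} forces $L_{t}(x)=[t,x]=0$ for all $t$, that is $x\in C_{L}(T)=H$. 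Hence $\mathrm{Nor}_{L}(H)=H$.

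For nilpotency I would apply Engel's Theorem to the Leibniz algebra $H$, so it suffices to show that $L_{h}|_{H}$ is nilpotent for each $h\in H$. First note that $H=Z_{L}(T)$ is a $p$-subalgebra: if $[h,t]=0$ then $L_{h^{[p]}}(t)=(L_{h})^{p}(t)=0$, so $h^{[p]}\in Z_{L}(T)=H$. By Theorem \ref{t2.1.70} there is $k$ with $s:=h^{[p]^{k}}$ semisimple, and the previous remark gives $s\in H$; moreover $(L_{h}|_{H})^{p^{k}}=L_{s}|_{H}$ because $H$ is $L_{h}$-invariant. Thus it is enough to prove $[s,H]=0$, and for this I would establish the sharper statement that every semisimple element of $H$ lies in $T$: then $s\in T$ and $[s,H]\subseteq[T,H]=0$ since $H=C_{L}(T)$, so $(L_{h}|_{H})^{p^{k}}=0$ and $L_{h}|_{H}$ is nilpotent.

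The remaining step, absorption of semisimple elements of $H$ into $T$, is where the main work lies. Given a semisimple $s\in H$, I would form the $p$-subalgebra $T':=\langle T\cup\{s\}\rangle_{p}$ and argue it is again a torus; maximality of $T$ would then yield $T'=T$ and $s\in T$. That $T'$ is a $p$-subalgebra consisting of semisimple elements is built into its construction, and the cross brackets vanish cheaply: $[s^{[p]^{i}},t]=(L_{s})^{p^{i}}(t)=0$ since $s\in Z_{L}(T)$, while $[t,s^{[p]^{i}}]=0$ because $s^{[p]^{i}}\in H=C_{L}(T)$. The genuine obstacle is commutativity of the part generated by $s$ alone, i.e. the vanishing of the self-brackets $[s^{[p]^{i}},s^{[p]^{j}}]$. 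By Proposition \ref{p1.3.0} these already lie in $Z(L)$, so the failure of skew-symmetry is confined to the center, and the task reduces to showing that these central correction terms are actually zero. I expect this to be the hard part; the plan is to handle it by decomposing $s$ into eigenvectors of the semisimple operator $L_{s}$ and exploiting that $Z(L)$ is $L_{s}$-invariant together with the defining relation $s=\sum_{i}\alpha_{i}s^{[p]^{i}}$ of semisimplicity to pin these components down.
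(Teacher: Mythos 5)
Your skeleton is sound and, for two of the three steps, coincides with the paper's own argument: you get $\mathrm{Nor}_{L}(H)=H$ exactly as the paper does (for $x\in \mathrm{Nor}_{L}(H)$ and $t\in T\subseteq H$ one has $(L_{t})^{2}(x)=0$, then Remark \ref{r5.7} gives $[t,x]=0$, so $x\in C_{L}(T)=H$), and you reduce nilpotency, via Engel, to absorbing the semisimple power $s=h^{[p]^{k}}\in H$ into $T$ by maximality, again as in the paper. The genuine gap is in the absorption step, and it is twofold. First, your claim that $T'=\langle T\cup\{s\}\rangle_{p}$ consists of semisimple elements ``by construction'' is false as stated: semisimplicity of $y+\beta s$ for commuting semisimple $y,s$ is a real lemma, not a formal consequence of the generation process. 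The paper proves it: on the abelian $p$-subalgebra $V=(\mathbb{F}y+\mathbb{F}s)_{p}$ the map $[p]$ is $p$-semilinear, semisimplicity of $y$ and $s$ forces $\langle V^{[p]}\rangle=V$, and Lemma \ref{l2.1.2} then makes every element of $V$ semisimple; you would need to import exactly this. Second, the step you yourself flag as the hard part --- vanishing of the central self-brackets $[s^{[p]^{i}},s^{[p]^{j}}]$ --- is left as a plan, and the plan cannot succeed: writing $s=\sum_{i}\alpha_{i}s^{[p]^{i}}$ and $g(X)=\sum_{i}\alpha_{i}X^{p^{i}-1}$, one only gets $[s,s]=g(L_{s})([s,s])$, and since every nonzero eigenvalue $\lambda$ of $L_{s}$ already satisfies $g(\lambda)=1$, this relation kills only the component of $[s,s]$ in $\ker L_{s}$ and constrains nothing else.

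In fact the missing statement is not derivable from what you use at that point, namely the identity $(L_{x})^{p}=L_{x^{[p]}}$ alone. Take $L=\mathbb{F}s\oplus\mathbb{F}z$ with $[s,s]=[s,z]=z$ and all other products of basis vectors zero; this satisfies the Leibniz identity, $L_{z}=0$, $(L_{s})^{2}=L_{s}$, hence $(L_{\alpha s+\beta z})^{p}=\alpha^{p}L_{s}=L_{\alpha^{p}s}$, so $(\alpha s+\beta z)^{[p]}:=\alpha^{p}s$ makes $L$ restricted in the sense of Definition \ref{d1.1.180} (asserted equivalent to Definition \ref{d1.1.18} by Theorem \ref{t1.3.4000}). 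Here $s$ is toral, hence semisimple, yet $[s,s]=z\neq 0$; worse, one checks the only torus of $(L,[p])$ is $\{0\}$, so $H=Z_{L}(0)=C_{L}(0)=L$ satisfies the hypotheses while $L^{n}=\mathbb{F}z$ for all $n\geq 2$, so $H$ is not nilpotent. Thus the implication both you and the paper need --- semisimple elements have vanishing self-bracket --- fails in general, and any repair must use the additive $s_{i}$-axioms of Definition \ref{d1.1.18} essentially or an extra hypothesis (e.g.\ $Z(L)=0$, which by Proposition \ref{p1.3.0} kills all such self-brackets but reduces $L$ to a Lie algebra). You should know that the paper's own proof has precisely the same hole: it verifies only the cross brackets $[T,x^{[p]^{k}}]=[x^{[p]^{k}},T]=0$ and then declares $T_{1}=T+\mathbb{F}x^{[p]^{k}}$ abelian, never addressing $[x^{[p]^{k}},x^{[p]^{k}}]$. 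So your instinct located exactly the unjustified point in the published argument; but as it stands your proposal does not prove the theorem, and the example above shows the gap is not closable by a cleverer eigenvalue computation.
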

\begin{proof} Assume that $T$ is a maximal torus. Let $x$ be an element of $H=Z_{L}(T)=C_{L}(T).$  There is $ k\in \mathbb{N}\backslash \{0\}$ such that
$x^{[p]^{k}}$  is  semisimple. Since $x^{[p]^{k}}$ is contained in the $p$-subalgebra $H,$ $T_{1}:=T+\mathbb{F}x^{[p]^{k}}$ is a torus of $L$
containing $T.$ In fact, $[T, x^{[p]^{k}}]=[x^{[p]^{k}}, T]=0,$ since $H=Z_{L}(T)=C_{L}(T).$ Hence $T_{1}$ is abelian.
Let $z=y+\beta x^{[p]^{k}}\in T_{1}(y\in T, \beta\in \mathbb{F}).$ Then
$z^{[p]}=y^{[p]}+\beta^{p} x^{[p]^{k+1}}\in T+\mathbb{F}x^{[p]^{k}},$
since $ x^{[p]^{k}}$ is semisimple and $T$ is a $p$-subalgebra. Hence $T_{1}$ is a $p$-subalgebra.
Let $y\in T.$ Consider the $p$-mapping on $V:=(\mathbb{F}y+\mathbb{F}x^{[p]^{k}})_{p}.$
 $[p]:V\rightarrow V$ is $p$-semilinear, since $T_{1}$ is abelian. The semisimplicity of $y$ and $x^{[p]^{k}}$
show that $y, x^{[p]^{k}}\in \langle V^{[p]}\rangle.$ Hence $\langle V^{[p]}\rangle=V.$  Then $y+\beta x^{[p]^{k}}$
is semisimple follows from Lemma \ref{l2.1.2}. Clearly, $T\subseteq T_{1}.$
 The maximality of $T$ then shows that $x^{[p]^{k}}\in T.$ Consequently, $(L_{x})^{p^{k}}(H)=0,$ proving that $L_{x}|_{H}$
is nilpotent. By Engel's theorem(cf.\cite[Theorem 1.1]{b}), $H$ is nilpotent.
Let $x$ be an element of $\mathrm{Nor}_{L}(H).$ Then
$(L_{h})^{2}(x)=0$ for every $h\in T.$ Since $h\in T$ is semisimple, by remark $\ref{r5.7},$
we obtain $L_{h}(x)=0,\ \forall h\in T,$ hence $x\in C_{L}(T)=H.$ As
a result, $H$ is a  Cartan subalgebra of $L.$
\end{proof}

\begin{proposition}\label{c2.3.4}
Let $(L,[p])$ be a restricted Leibniz algebra. If $H$ is nilpotent,
then $T:=\{h\in Z(H)| h$ semisimple $\}$ is a maximal torus of $H.$
\end{proposition}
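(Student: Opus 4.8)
The plan is to verify directly that $T$ meets the three conditions of a torus (abelian, $p$-subalgebra, every element semisimple) and then to prove the stronger fact that $T$ contains \emph{every} torus of $H$, from which maximality is immediate. The whole argument hinges on understanding the restriction of $[p]$ to the right center $Z(H)$.

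First I would reduce everything to a semilinear map on $Z(H)$. For $h\in Z(H)$ and $y\in H$ one has $L_{h^{[p]}}(y)=(L_{h})^{p}(y)=0$ since already $[h,y]=0$, so $h^{[p]}\in Z(H)$ and $Z(H)$ is $[p]$-stable. The key observation is that on $Z(H)$ the map $[p]$ is $p$-semilinear: for $x,y\in Z(H)\subseteq H$ the mixed terms $s_{i}(x,y)$ of Definition \ref{d1.1.18}(3) all vanish, because in $L\otimes_{\mathbb{F}}\mathbb{F}[X]$ the very first application gives $L(x\otimes X+y\otimes 1)(x\otimes 1)=[x,x]\otimes X+[y,x]\otimes 1=0$, both brackets lying in $[Z(H),H]=0$. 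Hence $(x+y)^{[p]}=x^{[p]}+y^{[p]}$, and together with $(\alpha x)^{[p]}=\alpha^{p}x^{[p]}$ the restriction $f:=[p]|_{Z(H)}\colon Z(H)\to Z(H)$ is $p$-semilinear; note also that an element of $Z(H)$ is semisimple in $L$ exactly when it is $f$-semisimple, since all its $[p]$-powers stay in $Z(H)$.

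Next I would identify $T$ with the stable image of $f$. As $Z(H)$ is finite-dimensional, the descending chain $\langle f(Z(H))\rangle\supseteq\langle f^{2}(Z(H))\rangle\supseteq\cdots$ stabilizes, so $T_{0}:=\langle f^{N}(Z(H))\rangle=\bigcap_{k}\langle f^{k}(Z(H))\rangle$ for large $N$. Using that for a $p$-semilinear map $\langle f(\langle S\rangle)\rangle=\langle f(S)\rangle$, one gets $\langle f(T_{0})\rangle=\langle f^{N+1}(Z(H))\rangle=T_{0}$, so by Lemma \ref{l2.1.2} every element of $T_{0}$ is semisimple. Conversely, if $v\in Z(H)$ is semisimple, then $v=\sum\alpha_{i}f^{i}(v)$ already lies in $\sum_{j\geq 1}\mathbb{F}f^{j}(v)$, so the $f$-invariant subspace $U:=\sum_{j\geq 0}\mathbb{F}f^{j}(v)$ satisfies $\langle f(U)\rangle=U$; iterating gives $U=\langle f^{k}(U)\rangle\subseteq\langle f^{k}(Z(H))\rangle$ for all $k$, whence $U\subseteq T_{0}$ and $v\in T_{0}$. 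Thus $T=T_{0}$ is a subspace. It is abelian and a subalgebra since $[T,T]\subseteq[Z(H),H]=0$, it is a $p$-subalgebra since $f(T_{0})\subseteq\langle f(T_{0})\rangle=T_{0}$, and all its elements are semisimple; therefore $T$ is a torus.

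Finally, for maximality I would invoke Remark \ref{r5.7}. Since $H$ is nilpotent, say $H^{t}=0$, each left multiplication $L_{x}$ with $x\in H$ is nilpotent on $H$, as $(L_{x})^{t-1}(H)\subseteq H^{t}=0$. Now let $T'$ be any torus of $H$ and $x\in T'$; then $x$ is semisimple and $(L_{x})^{t-1}(y)=0$ for every $y\in H$, so Remark \ref{r5.7} forces $[x,y]=0$. Hence $x\in Z(H)$ and, being semisimple, $x\in T$, giving $T'\subseteq T$ for every torus $T'$ of $H$; in particular $T$ is the maximal torus. I expect the delicate step to be the middle paragraph, namely that the semisimple elements of $Z(H)$ form a \emph{subspace}: this genuinely requires the $p$-semilinearity established above, since without additivity of $[p]$ on $Z(H)$ the set of semisimple elements need not be closed under addition.
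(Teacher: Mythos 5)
Your proposal is correct, and on the maximality half it coincides with the paper's own argument: both use the nilpotency of $H$ to make $L_{x}$ act nilpotently on $H$ for any $x$ in a torus $T^{'}$ (via $(L_{x})^{k}(H)\subseteq H^{k+1}$), then invoke Remark \ref{r5.7} to force $[x,H]=0$, hence $T^{'}\subseteq T$. Where you genuinely diverge is in verifying that $T$ is a torus at all. The paper's proof checks only that $[x,y]=0$ for $x,y\in T$ and that $x^{[p]}\in T$; it never verifies that $T$ is closed under addition, i.e.\ that the semisimple elements of $Z(H)$ form a \emph{subspace} --- exactly the point you flag as delicate. Your middle paragraph supplies this: since $[Z(H),H]=0$, all the terms $s_{i}(x,y)$ of Definition \ref{d1.1.18}(3) vanish for $x,y\in Z(H)$, so $f:=[p]|_{Z(H)}$ is $p$-semilinear, and identifying $T$ with the stable image $T_{0}=\langle f^{N}(Z(H))\rangle$ together with Lemma \ref{l2.1.2} shows $T$ is a subspace consisting precisely of the semisimple elements. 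This is in the spirit of the paper's own proof of Theorem \ref{t2.3.3}, where the semisimplicity of $y+\beta x^{[p]^{k}}$ is obtained from Lemma \ref{l2.1.2} applied to the abelian $p$-subalgebra $(\mathbb{F}y+\mathbb{F}x^{[p]^{k}})_{p}$; the proof of Proposition \ref{c2.3.4} evidently takes that step for granted, and your argument repairs the omission uniformly rather than pairwise. Beyond that, your route proves the stronger statement that $T$ contains \emph{every} torus of $H$ (the paper only treats tori $T^{'}\supseteq T$), so $T$ is the unique maximal torus. One assumption you share silently with the paper and could state explicitly: $H$ must be a $p$-subalgebra of $L$, since otherwise $h^{[p]}\in Z(H)$ (and the notion of a torus of $H$) does not make sense.
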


\begin{proof} Let $x,y$ be two elements of $T.$ Then $x\in Z(H),$ $y\in Z(H).$ We have $[y,x]=[x,y]=0.$
Hence $T$ is abelian. Since $Z(H)$ is a $p$-subalgebra, $x^{[p]}\in Z(H).$  Since $x$ is  semisimple, so is $x^{[p]}.$ Hence $x^{[p]}\in T.$
$T$ is a $p$-subalgebra. Hence, $T$ is  a torus of $H.$

Let $T^{'}$ be a torus of $H$ and $T\subseteq T^{'}.$ Let $x\in T^{'}.$ Then $x\in H.$ Since $H$ is nilpotent, there exists $n\in \mathbb{N}\backslash \{0\}$ such that $(L_{x})^{n}=0,$ $(L_{x})^{n}(h)=0, \forall h\in H.$ Since $x$ is  semisimple,  by remark $\ref{r5.7},$ one gets $L_{x}(h)=0, \ x\in Z(H),\ x\in T.$ Then $T^{'}=T.$
$T$ is a maximal torus.
 \end{proof}

\begin{corollary}\label{c2.3.5}
Let $(L,[p])$ be a restricted Leibniz algebra over an algebraically closed field $\mathbb{F}.$ Consider the root space
decomposition $L=\bigoplus_{\alpha\in \Phi}L_{\alpha}$ with respect to a Cartan subalgebra $H.$ Then the following states hold:

$\mathrm{(1)}$ If $h\in H$ is  semisimple, then $L_{h}|_{L_{\alpha}}=\alpha(h)\mathrm{id}_{L_{\alpha}};$ $\alpha(h)\in \mathrm{GF}(p)$ for all toral $h\in H,$ where $\mathrm{GF}(p)$ is a finite field.

$\mathrm{(2)}$ $\alpha(x^{[p]})=0,  \forall x\in L_{\alpha}.$
\end{corollary}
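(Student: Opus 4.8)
The plan is to read everything off the standard properties of the root-space (generalized weight-space) decomposition $L=\bigoplus_{\alpha\in\Phi}L_\alpha$ relative to the nilpotent subalgebra $H$, using two structural facts that survive the passage from Lie to Leibniz: each left multiplication $L_h$ ($h\in H$) is a derivation of $L$ (this is exactly the Leibniz identity rewritten as $[h,[x,y]]=[[h,x],y]+[x,[h,y]]$), and $L_{[a,b]}=L_aL_b-L_bL_a$. First I would record the three ingredients I need: (i) $H=L_0$; (ii) for $x\in L_\alpha$ one has $L_x(L_\beta)\subseteq L_{\beta+\alpha}$, which follows from the derivation property of the $L_h$ via the usual binomial expansion of $(L_h-(\alpha+\beta)(h))^N[x,y]$; and (iii) for every $h\in H$ the restriction $L_h|_{L_\beta}$ has the single generalized eigenvalue $\beta(h)$, which is what it means for $L_\beta$ to be the $\beta$-weight space (over the algebraically closed $\mathbb{F}$ the irreducible factors occurring in Theorem~\ref{t2.2.4}, hence in Theorem~\ref{t2.2.400}, are linear, forcing scalar weights).

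For part (1) the key observation is that a semisimple $h$ makes $L_h$ a semisimple endomorphism. Indeed $h=\sum_i\alpha_i h^{[p]^i}$ gives $L_h=\sum_i\alpha_i(L_h)^{p^i}$ because $L_{h^{[p]}}=(L_h)^p$; by Proposition~\ref{p2.1.1}(4) this is precisely the assertion that $L_h$ is semisimple, hence diagonalizable since $\mathbb{F}$ is algebraically closed. Restricting the diagonalizable operator $L_h$ to the invariant subspace $L_\alpha$, on which by (iii) its only eigenvalue is $\alpha(h)$, forces $L_h|_{L_\alpha}=\alpha(h)\,\mathrm{id}_{L_\alpha}$. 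If moreover $h$ is toral, then $h$ is semisimple by Proposition~\ref{p2.1.1}(1) and $(L_h)^p=L_{h^{[p]}}=L_h$; restricting to $L_\alpha$ gives $\alpha(h)^p\,\mathrm{id}=\alpha(h)\,\mathrm{id}$, so $\alpha(h)$ is a root of $X^p-X$ and therefore lies in $\mathrm{GF}(p)$.

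For part (2) I would exploit that $\mathbb{F}$ has characteristic $p$, so $p\alpha=0$ as a functional on $H$; by (ii) this means $(L_x)^p=L_{x^{[p]}}$ preserves every weight space. Writing $x^{[p]}$ in its weight components, each component outside $L_0$ lies in the right centre $Z(L)$ and hence acts as $0$, so $L_{x^{[p]}}=L_{y_0}$ for some $y_0\in L_0=H$; this makes $\alpha(x^{[p]})$, the eigenvalue of $L_{x^{[p]}}$ on $L_\alpha$, well defined. For $\alpha=0$ there is nothing to prove, so assume $\alpha\neq 0$ and set $A:=L_x|_{L_0}\colon L_0\to L_\alpha$ and $B:=(L_x)^{p-1}|_{L_\alpha}\colon L_\alpha\to L_0$, both legitimate by (ii) and $p\alpha=0$. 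Then $(L_x)^p|_{L_0}=BA$ and $(L_x)^p|_{L_\alpha}=AB$. Now $BA=L_{x^{[p]}}|_{L_0}=L_{y_0}|_H$ is nilpotent, since every left multiplication of the nilpotent algebra $H$ is nilpotent (Engel's theorem). If $(BA)^k=0$, then $(AB)^{k+1}=A(BA)^kB=0$, so $AB=(L_x)^p|_{L_\alpha}$ is nilpotent; its unique eigenvalue $\alpha(x^{[p]})$ is therefore $0$, as claimed.

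The hard part will be (ii) together with the bookkeeping of where $x^{[p]}$ sits, precisely the places where the absence of skew-symmetry could bite: one must verify that the derivation property of $L_h$ still yields weight additivity, and that the non-$L_0$ weight components of $x^{[p]}$, which unlike the Lie case need not vanish (there the centre meets only $L_0$), are nonetheless central and hence invisible to $L_{x^{[p]}}$. Once those structural points are secured, the passage from $BA$ to $AB$ is the device that lets me conclude nilpotency; I would emphasize it because the more naive approach of equating $\mathrm{tr}\big((L_x)^p|_{\bigoplus_i L_{i\alpha}}\big)=\big(\mathrm{tr}\,L_x|_{\bigoplus_i L_{i\alpha}}\big)^p=0$ only yields $\alpha(x^{[p]})\sum_i i\dim L_{i\alpha}=0$, which is vacuous for odd $p$ since $\sum_{i=0}^{p-1} i\equiv 0\pmod p$.
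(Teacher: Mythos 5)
Your proof is correct, and it splits against the paper as follows. Part (1) is essentially the paper's own argument (semisimplicity of $L_h$ via Proposition \ref{p2.1.1}, diagonalizability restricted to the invariant subspace $L_\alpha$, and $\alpha(h)^p=\alpha(h)$ for toral $h$), just with the justification that $L_h$ is a semisimple endomorphism spelled out. Part (2), however, is genuinely different --- and your route is the sound one, because the paper's proof of (2) contains a real gap: it asserts $[x^{[p]},x]=0$, which is the Lie-algebra computation $(L_x)^p(x)=(L_x)^{p-1}([x,x])$ with $[x,x]=0$ smuggled in; in a Leibniz algebra $[x,x]$ is merely right-central and $(L_x)^{p-1}([x,x])$ need not vanish. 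Concretely, take $\mathfrak{g}=\langle s,t,g\rangle$ with $[s,g]=g$, $t$ central, $s^{[p]}=s$, $g^{[p]}=t$, $t^{[p]}=0$, let $M=\mathbb{F}[G]/(G^{p^2})$ with $\rho(g)=G$, $\rho(s)(G^k)=kG^k$, $\rho(t)=G^p$, and form the restricted left Leibniz algebra $L=\mathfrak{g}\oplus M$ with $[(a,m),(a',m')]=([a,a'],\rho(a)m')$; then $H=\langle s,t\rangle\oplus M_0$ is a Cartan subalgebra, $x=(g,G)\in L_\alpha$, and $[x^{[p]},x]=(0,G^{p+1})\neq 0$, while also $x^{[p]}\notin H$, so the paper's phrase ``$\alpha(x^{[p]})$ is the only eigenvalue of $L_{x^{[p]}}|_{L_\alpha}$'' is not even literally meaningful there. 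Your bookkeeping repairs exactly these two defects: since $(L_x)^p$ preserves each $L_\beta$ (as $p\alpha=0$) while $L_{y_\beta}$ shifts weights by $\beta$, the projection argument forces the non-$L_0$ weight components of $x^{[p]}$ into the right center, so $L_{x^{[p]}}=L_{y_0}$ with $y_0\in L_0=H$ --- making $\alpha(x^{[p]})=\alpha(y_0)$ well defined --- and then your $AB$ versus $BA$ device, with $BA=(L_x)^p|_{L_0}=L_{y_0}|_H$ nilpotent because $H$ is a nilpotent algebra (this is nilpotency of $H$ itself, not Engel's theorem, a cosmetic point), yields nilpotency of $(L_x)^p|_{L_\alpha}=AB$ and hence $\alpha(x^{[p]})=0$. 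The ingredients you invoke beyond the paper ($L_h$ is a derivation by the left Leibniz identity, hence $[L_\alpha,L_\beta]\subseteq L_{\alpha+\beta}$, and $H=L_0$ via the paper's left normalizer $\mathrm{Nor}_L(H)=H$) are standard and available in its framework. In short: what the paper's proof buys is brevity at the cost of an invalid step borrowed from the Lie case; what yours buys is a proof of the corollary that actually survives the failure of skew-symmetry, which is precisely the point of the paper.
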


\begin{proof} (1) $L_{h}|_{L_{\alpha}}$ is  semisimple and consequently diagonalizable. Since $\alpha(h)$ is the only eigenvalue of $L_{h}|_{L_{\alpha}},$
we obtain $L_{h}|_{L_{\alpha}}=\alpha(h)\mathrm{id}_{L_{\alpha}}.$ Suppose that $h$ is toral. Then
$\alpha(h) \mathrm{id}_{L_{\alpha}}=\alpha(h^{[p]})\mathrm{id}_{L_{\alpha}}=
L_{h^{[p]}}|_{L_{\alpha}}=(L_{h})^{p}|_{L_{\alpha}}=\alpha(h)^{p}\mathrm{id}_{L_{\alpha}}.$ This proves that $\alpha(h)=\alpha(h)^{p}$
and $\alpha(h)\in \mathrm{GF}(p).$

(2) Let $x$ be a nonzero element of $L_{\alpha}.$ Then $[x^{[p]},x]=0$ and $0$ is an eigenvalue of $L_{x^{[p]}}|_{L_{\alpha}}.$
As $\alpha(x^{[p]})$ is the only eigenvalue of $L_{x^{[p]}}|_{L_{\alpha}},$ we obtain $\alpha(x^{[p]})=0.$
 \end{proof}

\begin{lemma} \label{l2.3.6}
Let $T$ be a torus of the restricted Leibniz algebra $(L,[p]).$

$\mathrm{(1)}$ Any $T$-invariant subspace $W\subseteq L$(i.e.,$[T,W]\subseteq W$) decomposes $W=C_{W}(T)+[T,W].$

$\mathrm{(2)}$ If $I\lhd_{p}L$ is a $p$-ideal such that $L/I$ is a torus, then there exists a torus $T^{'}\supset T$
such that $L=T^{'}+I.$
\end{lemma}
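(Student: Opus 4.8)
The plan is to handle the two parts separately: I would reduce (1) to the linear algebra of a commuting family of semisimple operators, and then use (1) as the engine that drives (2).

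For (1), I first note that since $T$ is abelian, the relation $L_{[s,t]}=L_sL_t-L_tL_s$ forces the operators in $\rho_W(T):=\{L_t|_W\mid t\in T\}\subseteq\mathrm{End}(W)$ to commute (each $L_t$ restricts to $W$ because $W$ is $T$-invariant). Each such operator is moreover semisimple: writing $t=\sum_i\alpha_i t^{[p]^i}$ from semisimplicity of $t$ and iterating the axiom $L_{a^{[p]}}=(L_a)^p$ to get $L_{t^{[p]^i}}=(L_t)^{p^i}$, one obtains $L_t|_W=\sum_i\alpha_i(L_t|_W)^{p^i}$, which by Proposition \ref{p2.1.1}(4) says $L_t|_W$ is a semisimple endomorphism. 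I would then prove $W=C_W(T)+[T,W]$ by induction on $\dim_{\mathbb{F}}\rho_W(T)$. If $\rho_W(T)=0$ the claim is trivial. Otherwise pick $0\neq\sigma_0=L_{t_0}|_W$; the Fitting decomposition of Lemma \ref{l2.2.2}(2) together with semisimplicity gives $W=\mathrm{ker}\,\sigma_0\oplus\mathrm{im}\,\sigma_0$, and both summands are $\rho_W(T)$-invariant since the operators commute with $\sigma_0$. Restricting to $A:=\mathrm{ker}\,\sigma_0$ strictly decreases the relevant dimension (as $\sigma_0|_A=0$), so the inductive hypothesis yields $A=C_A(T)+[T,A]$. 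Because $C_W(T)\subseteq\mathrm{ker}\,\sigma_0$ we have $C_A(T)=C_W(T)$, while $[T,A]$ and $\mathrm{im}\,\sigma_0=L_{t_0}(W)$ both lie in $[T,W]$; combining with $W=A\oplus\mathrm{im}\,\sigma_0$ gives $W=C_W(T)+[T,W]$ (in fact a direct sum).

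For (2), the first move is to feed $W=L$ into (1). Since $L/I$ is abelian, $[L,L]\subseteq I$, hence $[T,L]\subseteq I$, and (1) gives $L=C_L(T)+[T,L]=C_L(T)+I$. Writing $\pi:L\to L/I$ for the projection, I would take a maximal torus $T'\supseteq T$ (which exists by finite-dimensionality). Running the same reduction for $T'$ shows $\pi(C_L(T'))=L/I$, so it suffices to prove $\pi(T')=L/I$, as this forces $L=T'+I$. Assuming $\pi(T')\subsetneq L/I$, I would choose $y\in C_L(T')$ with $\pi(y)\notin\pi(T')$ and pass to a semisimple power $\tilde y:=y^{[p]^k}$ (Theorem \ref{t2.1.70}). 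Its image stays new: the $p$-mapping is nonsingular on the torus $L/I$ (if $\bar x^{[p]}=0$ then $\bar x=\sum_{i\ge1}\alpha_i(\bar x^{[p]})^{[p]^{i-1}}=0$), so over a perfect ground field Proposition \ref{p2.1.8} gives $\pi(y)^{[p]^k}\in\pi(T')\Rightarrow\pi(y)\in\pi(T')$, a contradiction. One would then assemble $(T'+\mathbb{F}\tilde y)_p$ into a torus strictly larger than $T'$, exactly as in Theorem \ref{t2.3.3}: abelianness plus semisimplicity of every element of the $p$-closure follow via Lemma \ref{l2.1.2}, contradicting maximality of $T'$ and forcing $\pi(T')=L/I$.

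The hard part is this final assembly, and the obstruction is precisely the Leibniz asymmetry. To adjoin $\tilde y$ and retain an abelian $p$-subalgebra I need $\tilde y$ to commute with $T'$ on \emph{both} sides and to stay in the correct centralizer after applying $[p]$. The right centralizer is well behaved: if $[x,t]=0$ then $[x^{[p]},t]=(L_x)^p(t)=(L_x)^{p-1}[x,t]=0$, so $Z_L(T')$ is a $p$-subalgebra. By contrast, the left centralizer $C_L(T')$ delivered by (1) need not be $p$-closed and need not equal $Z_L(T')$, because in a Leibniz algebra $[t,x]=0$ does not give $[x,t]=0$. What rescues the argument is Proposition \ref{p1.3.0}: the symmetric brackets $[a,b]+[b,a]$ are central, whence $[C_L(T'),T']\subseteq Z(L)$ and the two centralizers differ only inside $Z(L)$. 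The crux of a complete proof is therefore to select the lift $y$ in $Z_L(T')\cap C_L(T')$ and to absorb the central defect $[T',\tilde y]\subseteq Z(L)$ so that $(T'+\mathbb{F}\tilde y)_p$ is genuinely abelian; this reconciliation of left and right centralizers has no analogue in the restricted Lie setting and is where all the Leibniz-specific work concentrates.
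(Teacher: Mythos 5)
Your part (1) is correct and takes a genuinely different route from the paper. The paper deduces $W=C_{W}(T)+[T,W]$ from the weight-space decomposition of Theorem \ref{t2.2.4}: $W=\bigoplus_{\pi\in B}W_{\pi}$ with $W_{\pi_{0}}\subseteq C_{W}(T)$ and $[T,W_{\pi}]=W_{\pi}$ for $\pi\neq\pi_{0}$. You instead work directly with the commuting family $\{L_{t}|_{W}\mid t\in T\}$ (commuting via $L_{[s,t]}=L_{s}L_{t}-L_{t}L_{s}$ and abelianness of $T$), prove each operator semisimple through Proposition \ref{p2.1.1}(4), split $W=\mathrm{ker}\,\sigma_{0}\oplus\mathrm{Im}\,\sigma_{0}$ using Lemma \ref{l2.2.2}(2), and induct on $\dim_{\mathbb{F}}\rho_{W}(T)$. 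This is more elementary — it bypasses Theorem \ref{t2.2.4} and the need to check its hypotheses for the adjoint action — and it even yields the sum direct. That part I would accept as written.

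Part (2), however, contains a genuine gap that you flag but do not close. Your argument is by contradiction: choose $y\in C_{L}(T^{'})$ with $\pi(y)\notin\pi(T^{'})$, pass to $\tilde y=y^{[p]^{k}}$, and enlarge $T^{'}$ by $\tilde y$ ``exactly as in Theorem \ref{t2.3.3}.'' But Theorem \ref{t2.3.3} operates under the hypothesis $H=Z_{L}(T)=C_{L}(T)$, which supplies the \emph{two-sided} relations $[T,x^{[p]^{k}}]=[x^{[p]^{k}},T]=0$ needed to make $T+\mathbb{F}x^{[p]^{k}}$ abelian; here (1) hands you only the one-sided condition $[T^{'},y]=0$, the left centralizer $C_{L}(T^{'})$ is not known to be $p$-closed (so $\tilde y$ may leave it), and the symmetric-bracket observation from Proposition \ref{p1.3.0} gives merely $[y,T^{'}]\subseteq Z(L)$ — a defect you propose to ``absorb'' without doing so, and without showing that a lift $y$ with $\pi(y)\notin\pi(T^{'})$ can be found in $Z_{L}(T^{'})\cap C_{L}(T^{'})$ at all, since (1) produces only elements of $C_{L}(T^{'})$. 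In addition, your ``image stays new'' step invokes Proposition \ref{p2.1.8}, which requires $\mathbb{F}$ perfect (through the surjectivity in Proposition \ref{t2.1.7}), a hypothesis Lemma \ref{l2.3.6} does not carry. The paper's proof avoids this entire detour: given $x\in C_{L}(T^{'})$, take $r$ with $x^{[p]^{r}}$ semisimple (Theorem \ref{t2.1.70}); since $x+I$ is semisimple in the torus $L/I$, iterating its semisimplicity relation raises all exponents past $r$, yielding $\alpha_{r},\cdots,\alpha_{n}$ with $x-\sum_{i=r}^{n}\alpha_{i}x^{[p]^{i}}\in I$; then $s:=\sum_{i=r}^{n}\alpha_{i}x^{[p]^{i}}$ is a semisimple element of $C_{L}(T^{'})$, maximality of $T^{'}$ gives $s\in T^{'}$ in one stroke, and $L=C_{L}(T^{'})+[T^{'},L]\subseteq T^{'}+I$ follows. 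This substitution trick eliminates both the perfectness assumption and the torus-enlargement step you left open; the left-versus-right centralizer tension you correctly identify is then confined to the single application of maximality, rather than compounded with $p$-closure and lifting problems.
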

\begin{proof}
(1) The adjoint representation gives $W$ the structure of a $T$-module. According to Theorem \ref{t2.2.4}, we may write
$W=\oplus_{\pi\in B}W_{\pi}.$  Let $\pi_{0}$ be the function with $\pi_{0h}=X, \ \forall h\in T.$
Then $W_{\pi_{0}}\subseteq C_{W}(T)$ and $[T,W_{\pi}]=W_{\pi}, \forall \pi\neq \pi_{0}.$ Hence $W=C_{W}(T)+[T,W].$

(2) Let $T^{'}\supset T$ be a maximal torus. According to (1), we
write $L=C_{L}(T^{'})+[T^{'},L].$ Since $[T^{'},L]\subseteq
[L,L]\subseteq I,$ it will
suffice to show that $C_{L}(T^{'})\subseteq T^{'}+I.$ Let $x\in
C_{L}(T^{'}).$ By virtue of Theorem \ref{t2.1.70}, there is $r$ such
that $x^{[p]^{r}}$ is  semisimple. As $x+I$ is a semisimple element
of $L/I,$ we find $n\geq r$ and $\alpha_{1},\cdots, \alpha_{n}\in
\mathbb{F}$ such that
$x-\sum\limits_{i=r}^{n}\alpha_{i}x^{[p]^{i}}\in I.$ Since
$\sum\limits_{i=r}^{n}\alpha_{i}x^{[p]^{i}}$ is a  semisimple
element of $C_{L}(T^{'})$ and $T^{'}$ is a maximal torus, we obtain
$\sum\limits_{i=r}^{n}\alpha_{i}x^{[p]^{i}}\in T^{'}.$ This
concludes our proof.
\end{proof}

\begin{theorem} \label{t2.3.7} Let $(L_{1},[p]_{1}),$  $(L_{2},[p]_{2})$  be restricted Leibniz algebras and $\varphi:L_{1}\rightarrow L_{2}$
a surjective $p$-homomorphism.

$\mathrm{(1)}$ If $T_{1}$ is a maximal torus of $L_{1},$ then $\varphi(T_{1})$ is a maximal torus of $L_{2}.$

$\mathrm{(2)}$ If $T_{2}$ is a maximal torus of $L_{2}$ and $T_{1}$ is a maximal torus of $\varphi^{-1}(T_{2}),$  then $T_{1}$ is a maximal torus of $L_{1}.$
\end{theorem}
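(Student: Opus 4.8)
The plan is to prove each part using the machinery of tori, semisimplicity, and the behaviour of the $p$-mapping under $p$-homomorphisms. Throughout I would use the fact that a $p$-homomorphism $\varphi$ commutes with the $p$-mappings, i.e.\ $\varphi(x^{[p]_{1}})=\varphi(x)^{[p]_{2}}$, so that $\varphi$ sends semisimple elements to semisimple elements and $p$-subalgebras to $p$-subalgebras.

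For part $\mathrm{(1)}$, I would first check that $\varphi(T_{1})$ is a torus of $L_{2}$. Since $T_{1}$ is an abelian $p$-subalgebra and $\varphi$ is a surjective $p$-homomorphism, its image $\varphi(T_{1})$ is again an abelian $p$-subalgebra; moreover if $x\in T_{1}$ satisfies $x=\sum_{i=1}^{m}\alpha_{i}x^{[p]_{1}^{i}}$, then applying $\varphi$ gives $\varphi(x)=\sum_{i=1}^{m}\alpha_{i}\varphi(x)^{[p]_{2}^{i}}$, so every element of $\varphi(T_{1})$ is semisimple. Thus $\varphi(T_{1})$ is a torus. To prove maximality, suppose $T_{2}'\supseteq \varphi(T_{1})$ is a torus of $L_{2}$. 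The plan is to pull $T_{2}'$ back along $\varphi$ and apply Lemma \ref{l2.3.6}(2): the preimage $\varphi^{-1}(T_{2}')$ contains $T_{1}$, and $\ker\varphi$ is a $p$-ideal of $\varphi^{-1}(T_{2}')$ with quotient isomorphic to the torus $T_{2}'$, so there is a torus $T_{1}'\supseteq T_{1}$ with $\varphi^{-1}(T_{2}')=T_{1}'+\ker\varphi$. Maximality of $T_{1}$ forces $T_{1}'=T_{1}$, whence $T_{2}'=\varphi(T_{1}'+\ker\varphi)=\varphi(T_{1})$, giving maximality.

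For part $\mathrm{(2)}$, suppose $T_{2}$ is a maximal torus of $L_{2}$ and $T_{1}$ is a maximal torus of $\varphi^{-1}(T_{2})$. I would argue that $T_{1}$ is maximal in $L_{1}$ by contradiction: let $\widetilde{T}\supseteq T_{1}$ be a torus of $L_{1}$. Then $\varphi(\widetilde{T})$ is a torus of $L_{2}$ containing $\varphi(T_{1})$. The key observation is that $\varphi(T_{1})$ should already be maximal in $L_{2}$, which by part $\mathrm{(1)}$ applied to the restricted subalgebra $\varphi^{-1}(T_{2})$ (with $\varphi$ restricting to a surjection onto $T_{2}$) gives $\varphi(T_{1})=T_{2}$; since $T_{2}$ is maximal, $\varphi(\widetilde{T})=T_{2}$, so $\widetilde{T}\subseteq \varphi^{-1}(T_{2})$. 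But then $\widetilde{T}$ is a torus of $\varphi^{-1}(T_{2})$ containing the maximal torus $T_{1}$, forcing $\widetilde{T}=T_{1}$.

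The main obstacle I anticipate is the maximality argument in part $\mathrm{(1)}$, specifically verifying that Lemma \ref{l2.3.6}(2) applies cleanly to the preimage $\varphi^{-1}(T_{2}')$: one must confirm that $\ker\varphi$ is genuinely a $p$-ideal of this preimage and that the induced map realises $\varphi^{-1}(T_{2}')/\ker\varphi$ as the torus $T_{2}'$. A secondary subtlety is ensuring that maximal tori exist inside the relevant subalgebras so that Lemma \ref{l2.3.6}(2) can be invoked; this rests on finite-dimensionality of $L_{1}$. Once these points are secured, the remaining verifications that images and preimages preserve the defining properties of a torus are routine applications of the $p$-homomorphism property and the definition of semisimplicity.
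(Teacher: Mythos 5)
Your proposal is correct and takes essentially the same route as the paper: in part (1) you pull back the containing torus and invoke Lemma \ref{l2.3.6}(2) together with the maximality of $T_{1}$ to conclude $\varphi^{-1}(T_{2}')=T_{1}+\ker\varphi$, and in part (2) you apply part (1) to the restriction $\varphi|_{\varphi^{-1}(T_{2})}$ to obtain $\varphi(T_{1})=T_{2}$ before using the maximality of $T_{2}$ and of $T_{1}$, exactly as the paper does. You even make explicit two steps the paper compresses, namely that the torus supplied by the lemma coincides with $T_{1}$ by maximality and that $\ker\varphi$ is a $p$-ideal of the preimage with quotient the given torus.
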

\begin{proof} (1) Clearly, $\varphi(T_{1})$ is a torus of $L_{2}.$ Suppose that $T^{'}\supset \varphi(T_{1})$ is  a maximal torus of $L_{2}$. Then
 $\varphi^{-1}(T^{'})/\mathrm{ker}(\varphi)$ is a torus and by Lemma \ref{l2.3.6} (2) we may write $\varphi^{-1}(T^{'})=T_{1}+\mathrm{ker}(\varphi).$
 Hence $T^{'}=\varphi(\varphi^{-1}(T^{'}))=\varphi(T_{1}).$ This shows that $\varphi(T_{1})$ is a maximal torus of $L_{2}.$

(2) It follows from (1) that $\varphi(T_{1})=T_{2}.$  Let $T^{'}\supset T_{1}$ be a maximal torus of $L_{1}.$ Then $T_{2}\subseteq \varphi(T^{'})$
and the maximality of $T_{2}$ yields $\varphi(T^{'})=T_{2}.$ Thus $T^{'}\subseteq \varphi^{-1}(T_{2})$ and $T^{'}=T_{1},$ because of the maximality of $T_{1}.$
\end{proof}

\section{The uniqueness of decomposition}
Similar to Definition 2.1 of the reference \cite{czm}, we give the following definition.
\begin{definition} Let $\varphi$ be an endomorphism  of a restricted Leibniz algebra $(L,
  [p]).$ $\varphi$ is called an $L$-endomorphism of
   $L,$ if $\varphi L_{x}=L_{x} \varphi$ and $\varphi R_{x}=R_{x} \varphi$ for any
   $x\in L.$  An $L$-endomorphism of
   $L$ $\varphi$ is called an $L$-$p$-endomorphism of
   $L,$ if $\varphi(x^{[p]})=\varphi(x)^{[p]}, \forall x\in L.$ An $L$-endomorphism($L$-$p$-endomorphism) of
   $L$ $\varphi$ is called an $L$-automorphism($L$-$p$-automorphism)of
   $L,$ if $\varphi$ is bijection.
\end{definition}
\begin{example}  Let  $(L,[p])$ be a restricted  Leibniz algebra over
   $\mathbb{F}$ with   decomposition  $L=A\oplus B$ and $\pi$ be
   the projection into $A$ with respect to this decomposition,
    where $A$ and $B$ are $p$-ideals of $L$.
   Then $\pi$ is an $L$-$p$-endomorphism of $L$.
 \end{example}
\begin{lemma} \label{l3.2} Let  $(L,[p])$ be a restricted  Leibniz algebra over
   $\mathbb{F}.$ Then

$\mathrm{(1)}$ If $A$ is a subset of $L,$  then $Z_{L}(A)$ is a $p$-subalgebra of $L.$

$\mathrm{(2)}$ If $B$ is an ideal of $L,$  then $Z_{L}(B)$ is a $p$-ideal of $L.$ In particular, $Z(L)$ is a $p$-ideal of $L.$
\end{lemma}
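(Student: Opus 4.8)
The plan is to handle the two parts in order, isolating two independent ingredients: the $p$-closure of a centralizer, which will come entirely from the defining property $L_{x^{[p]}}=(L_x)^p$ of the $p$-mapping, and the (sub)algebra/ideal structure, which will come entirely from the Leibniz identity. The key preliminary observation is that membership in the right centralizer can be read as a kernel condition for a left multiplication: since $[x,a]=L_x(a)$, we have $Z_L(A)=\{x\in L\mid a\in\ker L_x \ \forall a\in A\}$.

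For part (1), I would first show $Z_L(A)$ is a subalgebra. Taking $x,y\in Z_L(A)$ and $a\in A$, I expand $[[x,y],a]$ by the Leibniz identity as $[x,[y,a]]-[y,[x,a]]$; both inner brackets vanish because $x,y\in Z_L(A)$, so $[x,y]\in Z_L(A)$. For the $p$-closure, I take $x\in Z_L(A)$ and compute, for each $a\in A$, that $[x^{[p]},a]=L_{x^{[p]}}(a)=(L_x)^p(a)=(L_x)^{p-1}(L_x(a))=0$, invoking property (1) of the $p$-mapping together with $L_x(a)=[x,a]=0$. Hence $x^{[p]}\in Z_L(A)$, and $Z_L(A)$ is a $p$-subalgebra.

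For part (2), the $p$-subalgebra structure is already supplied by part (1), since an ideal $B$ is in particular a subset of $L$; so only the ideal property of $Z_L(B)$ remains. Given $x\in Z_L(B)$, $z\in L$, and $b\in B$, I would expand both $[[z,x],b]$ and $[[x,z],b]$ by the Leibniz identity. In each expansion one term is controlled by $[x,b]=0$ (since $x\in Z_L(B)$) and the other by $[x,[z,b]]$, which vanishes because $[z,b]\in B$ (as $B$ is an ideal) and $x\in Z_L(B)$. This yields $[L,Z_L(B)]\subseteq Z_L(B)$ and $[Z_L(B),L]\subseteq Z_L(B)$, so $Z_L(B)$ is an ideal and hence a $p$-ideal. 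The final assertion is the special case $B=L$, where $Z_L(L)=Z(L)$.

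The computations are routine; I expect no genuine obstacle. The only point demanding care is the absence of antisymmetry, which forces me to verify the left-ideal and right-ideal conditions of part (2) separately rather than deducing one from the other, and to use the ideal hypothesis on $B$ in precisely the form $[z,b]\in B$ to kill the term $[x,[z,b]]$.
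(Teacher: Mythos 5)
Your proposal is correct and follows essentially the same route as the paper: the subalgebra and ideal properties come from the Leibniz identity (using $[z,b]\in B$ in exactly the way you indicate), and the $p$-closure comes from $[x^{[p]},a]=(L_x)^p(a)=(L_x)^{p-1}([x,a])=0$, with part (2) inheriting the $p$-subalgebra property from part (1) just as in the paper. Your explicit verification of both $[[x,z],b]=0$ and $[[z,x],b]=0$ matches the paper's ``similarly'' step, so there is nothing to add.
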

\begin{proof} (1) For any $x,y\in Z_{L}(A), z\in A,$ we have $[[x,y],z]=[x,[y,z]]-[y,[x,z]]=0,$
$[x,y]\in Z_{L}(A).$ Similarly, $[y,x]\in Z_{L}(A).$
Hence $Z_{L}(A)$ is a subalgebra of $L.$  Since $x\in Z_{L}(A),$ one gets $[x^{[p]},z]=(L_{x})^{p-1}[x,z]=0,$
$x^{[p]}\in Z_{L}(A).$ As a result, $Z_{L}(A)$ is a $p$-subalgebra of $L.$

(2) For any $x\in Z_{L}(B), y\in L,z\in B,$ since $B$ is an ideal of $L,$  $[y,z]\in B,$ we have $[[x,y],z]=[x,[y,z]]-[y,[x,z]]=0,$
$[x,y]\in Z_{L}(B).$ Similarly, $[y,x]\in Z_{L}(B).$ Hence $Z_{L}(B)$ is an ideal of $L.$ By (1),
$Z_{L}(B)$ is a $p$-subalgebra of $L.$ Therefore, $Z_{L}(B)$ is a $p$-ideal of $L.$
\end{proof}

\begin{lemma} \label{l3.4} Let  $(L,[p])$ be a restricted Leibniz algebra
    over $\mathbb{F}$.    Then the following statements hold:

      $\mathrm{(1)}$ If $f$ and $g$ are  $L$-endomorphisms of $L$, then so are $f+g$ and $fg.$

     $\mathrm{(2)}$ If $f$ and $g$ are  $L$-$p$-endomorphisms of $L$, then so is $fg.$

     $\mathrm{(3)}$  If $f$ is an  $L$-$p$-automorphism of $L$, then  so is
     $f^{-1}$.
\end{lemma}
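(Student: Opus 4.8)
The plan is to verify each of the three closure properties directly from the definitions, since all three reduce to routine checks that the relevant composite maps continue to satisfy the defining conditions of an $L$-endomorphism ($\varphi L_{x}=L_{x}\varphi$ and $\varphi R_{x}=R_{x}\varphi$), together with the $p$-compatibility condition $\varphi(x^{[p]})=\varphi(x)^{[p]}$ where required. For part (1), I would take two $L$-endomorphisms $f,g$ and compute $(f+g)L_{x}$ and $(fg)L_{x}$, showing each equals $L_{x}(f+g)$ and $L_{x}(fg)$ respectively, and likewise for the right-multiplication operators $R_{x}$. The sum case is immediate from distributivity of composition over addition, while the product case uses associativity of composition together with the fact that $f$ and $g$ each commute with $L_{x}$ and $R_{x}$: namely $(fg)L_{x}=f(gL_{x})=f(L_{x}g)=(fL_{x})g=L_{x}(fg)$, and symmetrically for $R_{x}$.

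For part (2), I would observe that $fg$ is already an $L$-endomorphism by part (1), so it remains only to check the $p$-compatibility. For any $x\in L$, I would compute $(fg)(x^{[p]})=f(g(x^{[p]}))=f(g(x)^{[p]})=(f(g(x)))^{[p]}=((fg)(x))^{[p]}$, where the middle two equalities invoke the $p$-compatibility of $g$ and $f$ in turn. This chain directly yields that $fg$ is an $L$-$p$-endomorphism.

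For part (3), let $f$ be an $L$-$p$-automorphism, so $f$ is a bijective $L$-$p$-endomorphism and $f^{-1}$ exists as a linear bijection. The key step is to transfer the commutation relations through $f^{-1}$: starting from $fL_{x}=L_{x}f$, I would compose with $f^{-1}$ on both sides to obtain $L_{x}f^{-1}=f^{-1}L_{x}$, and similarly $R_{x}f^{-1}=f^{-1}R_{x}$, showing $f^{-1}$ is an $L$-endomorphism. For the $p$-compatibility of $f^{-1}$, I would apply $f^{-1}$ to both sides of the identity $f(x^{[p]})=f(x)^{[p]}$ after substituting $x\mapsto f^{-1}(y)$: writing $y=f(x)$ so that $x=f^{-1}(y)$, the relation $f((f^{-1}(y))^{[p]})=(f(f^{-1}(y)))^{[p]}=y^{[p]}$ gives, upon applying $f^{-1}$, that $(f^{-1}(y))^{[p]}=f^{-1}(y^{[p]})$, as required.

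I do not anticipate a genuine obstacle here, as the argument is entirely formal once the substitution trick in part (3) is set up correctly; the only point demanding slight care is ensuring that in part (3) the variable substitution $x=f^{-1}(y)$ is performed consistently so that the $p$-compatibility identity for $f$ is correctly inverted into the corresponding identity for $f^{-1}$. All of these computations rely solely on the definitions of $L$-endomorphism and $L$-$p$-endomorphism stated just prior, together with basic properties of composition of linear maps.
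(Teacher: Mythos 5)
Your proposal is correct and takes essentially the same route as the paper's own proof: the identical commutation computations $(fg)L_{x}=f(gL_{x})=f(L_{x}g)=(fL_{x})g=L_{x}(fg)$ for part (1), the same chain through the $p$-compatibility of $g$ then $f$ for part (2) (which the paper merely labels ``clearly''), and the same conjugation-by-$f^{-1}$ and substitution $x=f^{-1}(y)$ argument for part (3), where your presentation is in fact a cleaner rendering of the paper's somewhat convoluted manipulation $f^{-1}L_{x}=f^{-1}(fL_{x})f^{-1}=L_{x}f^{-1}$. Nothing further is needed.
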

    \begin{proof} (1) Since $f$ and $g$ are  $L$-endomorphisms of
     $L,$  $fL_{x}=L_{x}f$ and $gL_{x}=L_{x}g$ for any
   $x\in L$. Then $(f+g)L_{x}=fL_{x}+gL_{x}=L_{x}f+L_{x}g=L_{x}(f+g),$ $(fg)L_{x}=f(gL_{x})=f(L_{x}g)=(fL_{x})g=(L_{x}f)g=L_{x}(fg).$
    Similarly, $(f+g)R_{x}=R_{x}(f+g), (fg)R_{x}=R_{x}(fg).$  So $f+g$ and $fg$ are $L$-endomorphisms of $L.$

   (2) Since $f$ and $g$ are  $L$-endomorphisms of
     $L$,  by (1), $fg$ is an $L$-endomorphism of $L.$ Clearly,
   $fg(x^{[p]})=(fg(x))^{[p]}.$ As a result, $fg$ is an $L$-$p$-endomorphism of $L.$

   (3) Since  $f$ is an  $L$-automorphism of $L,$ there is an
   automorphism $f^{-1}$ such that $f\cdot f^{-1}=f^{-1}\cdot
   f=\mathrm{id}_L$ and $fL_{x}=L_{x}f$ for any  $x\in L$. As  $(f\cdot f^{-1})L_{x}=L_{x}(f\cdot f^{-1}),$
   $f( f^{-1}L_{x})=L_{x}(f\cdot f^{-1})$ and $f^{-1}f( f^{-1}L_{x})=f^{-1}L_{x}(f\cdot f^{-1}),$ i.e.,
    $f^{-1}L_{x}=f^{-1}L_{x}(f\cdot f^{-1})=f^{-1}(fL_{x}) f^{-1}=L_{x} f^{-1}.$ Similarly,
    $f^{-1}R_{x}=R_{x}f^{-1}.$ So $f^{-1}$ is an $L$-automorphism of $L.$ $x^{[p]}=(ff^{-1}(x))^{[p]}=f((f^{-1}(x))^{[p]}),
     \forall x\in L,$ $f^{-1}(x^{[p]})=(f^{-1}(x))^{[p]}.$
   Hence $f^{-1}$ is an  $L$-$p$-automorphism of $L.$
   \end{proof}
\begin{lemma} \label{l3.5}  Let $(L,[p])$ be a restricted Leibniz algebra over $\mathbb{F}.$ If $\varphi$ is an $L$-$p$-endomorphi-\\sm of $L$,
  then there exists  $k\in \mathbb{N}\backslash\{0\}$ satisfying

  $\mathrm{(1)}$  $L$ has a decomposition of $p$-ideals
  $L=\mathrm{ker}\varphi^k \oplus \mathrm{Im}\varphi^k.$

  $\mathrm{(2)}$  If $L$ can not
   be decomposed into the direct sum of  $p$-ideals of $L,$ then $\varphi^k=0$ or
   $\varphi\in \mathrm{Aut}_{p}L,$ where $\mathrm{Aut}_{p}L$ is the group of $p$-automorphisms of $L.$
\end{lemma}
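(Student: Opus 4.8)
The plan is to prove a Fitting-type decomposition for $\varphi$, first splitting $L$ as a vector space and then recognising the two Fitting components as $p$-ideals. Since $L$ is finite-dimensional, I would regard $\varphi$ as an element of $\mathrm{End}_{\mathbb{F}}(L)$ and apply Lemma \ref{l2.2.2}(2) to obtain a decomposition $L=V_{0}\oplus V_{1}$ into $\varphi$-invariant subspaces for which $\varphi|_{V_{0}}$ is nilpotent and $\varphi|_{V_{1}}$ is invertible.

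Next I would fix one exponent $k$ realising this splitting through a single power of $\varphi$. Because $\dim_{\mathbb{F}}L<\infty$ and $\varphi|_{V_{0}}$ is nilpotent, there is $k\in\mathbb{N}\backslash\{0\}$ with $\varphi^{k}|_{V_{0}}=0$. For such $k$ I would check that $V_{0}=\mathrm{ker}\varphi^{k}$ and $V_{1}=\mathrm{Im}\varphi^{k}$: invertibility of $\varphi|_{V_{1}}$ makes $\varphi^{k}|_{V_{1}}$ invertible, so decomposing $v=v_{0}+v_{1}$ and using $\varphi^{k}(v)=\varphi^{k}(v_{1})$ shows $\mathrm{ker}\varphi^{k}\subseteq V_{0}$, while $\varphi^{k}(V_{1})=V_{1}$ gives $\mathrm{Im}\varphi^{k}=V_{1}$. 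This produces the vector-space direct sum $L=\mathrm{ker}\varphi^{k}\oplus\mathrm{Im}\varphi^{k}$.

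The core step is upgrading these summands to $p$-ideals, and here I would use every defining property of an $L$-$p$-endomorphism. By Lemma \ref{l3.4}(2) the map $\psi:=\varphi^{k}$ is again an $L$-$p$-endomorphism, so $\psi L_{x}=L_{x}\psi$, $\psi R_{x}=R_{x}\psi$ and $\psi(x^{[p]})=\psi(x)^{[p]}$ for all $x\in L$. For $x\in\mathrm{ker}\psi$ and $y\in L$ the relations $\psi([y,x])=L_{y}(\psi(x))=0$ and $\psi([x,y])=R_{y}(\psi(x))=0$ show that $\mathrm{ker}\psi$ is an ideal, and $\psi(x^{[p]})=\psi(x)^{[p]}=0^{[p]}=0$ (with $0^{[p]}=0$ from property (2) of the $p$-mapping) shows it is $p$-closed. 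Dually, for $x=\psi(z)\in\mathrm{Im}\psi$ the identities $[y,x]=\psi([y,z])$, $[x,y]=\psi([z,y])$ and $x^{[p]}=\psi(z^{[p]})$ return all three elements to $\mathrm{Im}\psi$. Hence both summands are $p$-ideals and (1) follows.

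For (2) I would invoke indecomposability: if $L$ is not a direct sum of two nonzero $p$-ideals, the decomposition in (1) must be trivial, so either $\mathrm{Im}\varphi^{k}=0$, whence $\varphi^{k}=0$, or $\mathrm{ker}\varphi^{k}=0$. In the latter case $\mathrm{ker}\varphi\subseteq\mathrm{ker}\varphi^{k}=0$ forces $\varphi$ to be injective, hence bijective by finite-dimensionality, and Lemma \ref{l3.4}(3) then shows $\varphi^{-1}$ is again an $L$-$p$-endomorphism, so $\varphi\in\mathrm{Aut}_{p}L$. I expect the only genuinely delicate point to be the bookkeeping that identifies the abstract Fitting components $V_{0},V_{1}$ with $\mathrm{ker}\varphi^{k}$ and $\mathrm{Im}\varphi^{k}$ for one common exponent $k$; once that identification is secured, the ideal and $p$-ideal closure conditions drop out directly from the commutation and $p$-compatibility relations.
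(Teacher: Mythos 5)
Your proposal is correct and takes essentially the same approach as the paper: a Fitting-type splitting $L=\mathrm{ker}\varphi^{k}\oplus\mathrm{Im}\varphi^{k}$, then the observation via Lemma \ref{l3.4}(2) that $\varphi^{k}$ is again an $L$-$p$-endomorphism, whose commutation with all $L_{x}$, $R_{x}$ and compatibility with $[p]$ make both summands $p$-ideals, with (2) read off from indecomposability and Lemma \ref{l3.4}(3) exactly as in the paper. The only difference is packaging: the paper obtains the splitting directly from the minimal polynomial $f(\lambda)=\lambda^{k}g(\lambda)$ via a Bezout identity $u(\lambda)g(\lambda)+v(\lambda)\lambda^{k}=1$ (which automatically yields the summands as $\mathrm{ker}\varphi^{k}$ and $\mathrm{Im}\varphi^{k}$), whereas you invoke Lemma \ref{l2.2.2}(2) and then correctly identify $V_{0}=\mathrm{ker}\varphi^{k}$, $V_{1}=\mathrm{Im}\varphi^{k}$ for a common exponent $k$ --- the same argument in citable form.
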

\begin{proof} (1) Let $f(\lambda)=\lambda^kg(\lambda)$ be the
   minimal polynomial of $\varphi,$
   where $\lambda$ and $g(\lambda)$ are coprime. Then there are polynomials
   $u(\lambda)$ and $ v(\lambda)$ satisfying
  $u(\lambda)g(\lambda)+v(\lambda)\lambda^k=1.$ So we have
     $y=u(\varphi)g(\varphi)(y)+v(\varphi)\varphi^k(y)$ for all $y\in
     L.$ Since $\varphi^k(u(\varphi)g(\varphi)(y))=(\varphi^kg(\varphi))u(\varphi)(y)=0,
      u(\varphi)g(\varphi)(y)\in \ker \varphi^k,$
    and $v(\varphi)\varphi^k(y)=\varphi^k (v(\varphi)(y))\in {\rm Im}\varphi^k.$ Thus $L=\ker \varphi^k
    + {\rm Im}\varphi^k.$ If $y\in \ker \varphi^k
    \cap {\rm Im}\varphi^k,$ then $\varphi^k(y)=0$ and
    $y=\varphi^k(z)$ for some $z\in L.$ So
    $y=u(\varphi)g(\varphi)\varphi^k(z)
     +v(\varphi)\varphi^k(y)=u(\varphi)f(\varphi)(z)
     +v(\varphi)(\varphi^k(y))=0,$ i.e., $\ker \varphi^k
    \cap {\rm Im}\varphi^k=\{0\}.$ Thus $L=\ker \varphi^k
    \oplus {\rm Im}\varphi^k$ as a vector space.

     Since $\varphi$ is an $L$-$p$-endomorphism of
       $L,$ $\varphi^k$ is an $L$-$p$-endomorphism
       of $L$  by Lemma
       \ref{l3.4} (2). Then $\varphi^k[x,L]=[\varphi^k(x),\varphi^k(L)]=0,  \varphi^k[L,x]=[\varphi^k(L),\varphi^k(x)]=0$ for any
       $x\in \ker\varphi^k,$ i.e.,  $\ker\varphi^k$ is an ideal of
       $L.$ $\varphi^k(x^{[p]})=\varphi^{k-1}(\varphi(x)^{[p]})=(\varphi^{k}(x))^{[p]}=0, \forall x\in \ker\varphi^k.$
      Hence $\ker\varphi^k$ is a $p$-ideal of $L.$
      Let $x=x_{1}+\varphi^{k}(x_{2})\in
       L,$ where $x_{1}\in \ker\varphi^k, x_{2}\in L.$ Suppose $a\in {\rm Im}\varphi^k.$ So $a=\varphi^k(y)$ for
       some $y\in L.$ Since $\varphi^k$ is an $L$-$p$-endomorphism of
       $L,$  then  $[x,a]=[x,\varphi^k(y)]=\varphi^k[x_{2},y]\in {\rm Im}\varphi^k.$ Similarly,  $[a,x]\in {\rm Im}\varphi^k.$
       Therefore,  ${\rm Im}\varphi^k$ is an ideal of $L.$ Let $x\in {\rm Im}\varphi^k.$ Then there exists $y\in L$ such that $x=\varphi^k(y).$ $x^{[p]}=(\varphi^k(y))^{[p]}=\varphi^k(y^{[p]}),$ $x^{[p]}\in {\rm Im}\varphi^k.$ Consequently,
 ${\rm Im}\varphi^k$ is a $p$-ideal of $L.$

  (2) If $L$ can not
   be decomposed into the direct sum of  $p$-ideals, then we can know that $\ker
  \varphi^k=L$ or Im$\varphi^k=L$ by (1). This means that $\varphi^k=0$ or
  $\varphi^k \in {\rm Aut}_{p}L.$ So $\varphi^k=0$ or
   $\varphi\in \mathrm{Aut}_{p}L.$
\end{proof}
 \begin{lemma} \label{l3.6} Let $(L,[p])$ be a restricted  Leibniz algebra over $\mathbb{F}.$ Let $\varphi_i(1\leq i\leq n),$
  $\sum\limits_{i=1}^j \varphi_i(1\leq j\leq n)$ be
  $L$-$p$-endomorphisms of $L$ and $\varphi_1+\varphi_2 +\cdots
  +\varphi_n=\mathrm{id}_L.$ If $L$ can not
   be decomposed into the direct sum of  $p$-ideals,  then there exists $i( 1\leq i \leq n)$ satisfying  $\varphi_i\in
  {{\rm Aut}_{p}}L.$
  \end{lemma}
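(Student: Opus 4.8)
The plan is to argue by induction on $n$, the decisive input being the Fitting-type dichotomy of Lemma \ref{l3.5}(2): since $L$ cannot be decomposed into a direct sum of $p$-ideals, every $L$-$p$-endomorphism of $L$ is either nilpotent or lies in $\mathrm{Aut}_{p}L$. First I would observe that $\mathrm{id}_{L}$ is itself an $L$-$p$-automorphism, so it is the additive relation $\sum_{i=1}^{n}\varphi_i=\mathrm{id}_L$ that must be exploited. The point to keep in mind throughout is that $L$-$p$-endomorphisms are \emph{not} closed under addition (the $s_i$-terms in the definition of the $p$-mapping obstruct this), which is precisely why the standing hypothesis that each partial sum $\psi_j:=\sum_{i=1}^{j}\varphi_i$ is again an $L$-$p$-endomorphism is indispensable.

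For the base case $n=1$ one has $\varphi_1=\mathrm{id}_L\in\mathrm{Aut}_{p}L$. For the inductive step I would write $\mathrm{id}_L=\psi_{n-1}+\varphi_n$, where both summands are $L$-$p$-endomorphisms, and apply Lemma \ref{l3.5}(2) to $\varphi_n$. If $\varphi_n\in\mathrm{Aut}_{p}L$ we are done, so assume $\varphi_n^{k}=0$ for some $k\in\mathbb{N}\backslash\{0\}$. Then $\psi_{n-1}=\mathrm{id}_L-\varphi_n$ is invertible, with inverse $\mathrm{id}_L+\varphi_n+\cdots+\varphi_n^{k-1}$; being a bijective $L$-$p$-endomorphism, $\psi_{n-1}$ is an $L$-$p$-automorphism.

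Next I would set $\theta:=\psi_{n-1}^{-1}$, which is an $L$-$p$-automorphism by Lemma \ref{l3.4}(3), and compose the relation $\psi_{n-1}=\sum_{i=1}^{n-1}\varphi_i$ with $\theta$ to obtain $\mathrm{id}_L=\sum_{i=1}^{n-1}\theta\varphi_i$. By Lemma \ref{l3.4}(2) each $\theta\varphi_i$ and each partial sum $\sum_{i=1}^{j}\theta\varphi_i=\theta\psi_j$ is again an $L$-$p$-endomorphism, so the family $\{\theta\varphi_i\}_{1\le i\le n-1}$ satisfies all hypotheses of the lemma with $n-1$ terms summing to $\mathrm{id}_L$. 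The inductive hypothesis then supplies $i_0$ with $\theta\varphi_{i_0}\in\mathrm{Aut}_{p}L$, and hence $\varphi_{i_0}=\theta^{-1}\circ(\theta\varphi_{i_0})=\psi_{n-1}\circ(\theta\varphi_{i_0})$ is a composite of $L$-$p$-automorphisms, so $\varphi_{i_0}\in\mathrm{Aut}_{p}L$ by Lemma \ref{l3.4}, as required.

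The genuinely delicate point is not any single computation but the bookkeeping of the $p$-structure: because sums of $L$-$p$-endomorphisms need not respect $[p]$, one cannot simply work inside a ring of $p$-endomorphisms, and the reduction to fewer terms must be effected by composing with the automorphism $\theta$ so as to remain inside the closure properties guaranteed by Lemma \ref{l3.4}. Checking that \emph{every} hypothesis survives this twist, the partial-sum condition in particular, is the crux; the passage from nilpotence of $\varphi_n$ to invertibility of $\mathrm{id}_L-\varphi_n$ via the terminating geometric series is the only other essential ingredient and is routine once the setup is in place.
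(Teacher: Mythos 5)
Your proposal is correct and takes essentially the same route as the paper's proof: induction on $n$, the nilpotent-or-automorphism dichotomy of Lemma \ref{l3.5}(2) applied to the two-term relation $\psi_{n-1}+\varphi_n=\mathrm{id}_L$, and reduction to $n-1$ summands by composing with $\psi_{n-1}^{-1}$, with Lemma \ref{l3.4} supplying the needed closure properties (you compose on the left where the paper composes on the right, which is immaterial). The only local difference is that the paper handles the two-term case by deriving $\varphi_1\varphi_2=\varphi_2\varphi_1$ and expanding $(\varphi_1+\varphi_2)^k$ binomially to contradict joint nilpotence, whereas you invert $\mathrm{id}_L-\varphi_n$ directly via the terminating geometric series; this is a slightly more direct rendering of the same step, and your explicit check that the partial sums $\theta\psi_j$ remain $L$-$p$-endomorphisms is a point the paper leaves implicit.
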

\begin{proof} We prove this result by induction on $n.$
   The result is obviously true for $n=1.$
For $n=2,$ since $\varphi_1+\varphi_2=\mathrm{id}_L,$
      $\varphi_1(\varphi_1+\varphi_2)=(\varphi_1+\varphi_2)\varphi_1$
  and $\varphi_1\varphi_2=\varphi_2\varphi_1.$ Now, we suppose
  $\varphi_1,\varphi_2\notin {{\rm Aut}_{p}}L.$ By virtue of Lemma  \ref{l3.5} (2), there is
  $k_i(i=1,2)$ satisfying $\varphi^{k_i}=0.$ Put $k>k_1+k_2,$
  then
 $\mathrm{id}_L=(\varphi_1+\varphi_2)^k=\sum_{j=0}^kC_k^j\varphi_1^{k-j}\varphi_2^j=0.$
 It is a contradiction.  From it we can get
  $\varphi_1\in{\rm Aut}_{p} L$ or
     $\varphi_2\in{{\rm Aut}_{p}}L.$

  Suppose $n-1$ holds and $\psi:=\sum\limits_{i=1}^{n-1}\varphi_i,$ then
  $\psi+\varphi_n=\mathrm{id}_L.$
    From the discussion in the case of $n=2,$
    we get $\psi\in{{\rm Aut}_{p}}L$ or $\varphi_n\in{{\rm Aut}_{p}}L.$ If
   $\varphi_n\in{{\rm Aut}_{p}}L,$ then the conclusion is true. If
  $\psi\in{{\rm Aut}_{p}}L,$ then $\psi^{-1},$  $\varphi_1\psi^{-1},$
  $\cdots,$ $\varphi_{n-1}\psi^{-1}$ are
  $L$-$p$-endomorphisms of $L$ by means of Lemma \ref{l3.4} and
   $\sum\limits_{i=1}^{n-1}\varphi_i\psi^{-1}=\psi\psi^{-1}=\mathrm{id}_L$.
   By the inductive assumption, there exists $i$ such that $\varphi_i\psi^{-1}\in{\rm
   Aut}_{p}L.$ Hence $\varphi_i\in{{\rm Aut}_{p}}L.$
\end{proof}
   \begin{lemma} \label{l3.7}  Let  $(L,[p])$ be a restricted  Leibniz algebra over $\mathbb{F}.$
    If $L$ has a decomposition of $p$-ideals $L=A\oplus B,$ then the following statements hold:

       $\mathrm{(1)}$ $Z(L)$ has a decomposition of $p$-ideals $Z(L)=Z(A)\oplus Z(B).$

       $\mathrm{(2)}$ If $Z(L)=0,$ then $Z_{L}(A)=B$ and $ Z_{L}(B)=A.$
       \end{lemma}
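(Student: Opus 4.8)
The plan hinges on one structural fact: since $A$ and $B$ are ideals with $L=A\oplus B$, both containments $[A,B]\subseteq[A,L]\subseteq A$ and $[A,B]\subseteq[L,B]\subseteq B$ hold, whence $[A,B]\subseteq A\cap B=0$, and symmetrically $[B,A]=0$. Thus the two summands annihilate one another from both sides. A first consequence I would record is the identification $Z(A)=Z(L)\cap A$ and $Z(B)=Z(L)\cap B$: if $x\in A$ with $[x,A]=0$, then $[x,B]\subseteq[A,B]=0$ forces $[x,L]=0$, so $x\in Z(L)$, and the reverse inclusion is immediate. Everything below rests on this orthogonality.

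For part (1), I would take $x\in Z(L)$ and write $x=a+b$ with $a\in A$, $b\in B$. For every $l\in L$ the equation $[a,l]+[b,l]=[x,l]=0$ splits along the direct sum, since $[a,l]\in A$ and $[b,l]\in B$; hence $[a,l]=[b,l]=0$ for all $l$, so $a\in Z(L)\cap A=Z(A)$ and $b\in Z(L)\cap B=Z(B)$. This gives $Z(L)=Z(A)+Z(B)$, and the sum is direct because $Z(A)\subseteq A$ and $Z(B)\subseteq B$. For the restricted structure I would invoke Lemma \ref{l3.2}(2), applied with the ideal taken to be $L$ itself, to conclude that $Z(L)$ is a $p$-ideal of $L$; then for $x\in Z(A)$ one has $x^{[p]}\in Z(L)$ and, since $A$ is a $p$-ideal, $x^{[p]}\in A$, so $x^{[p]}\in Z(L)\cap A=Z(A)$, and symmetrically for $Z(B)$. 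As $Z(L)$ is abelian, $Z(A)$ and $Z(B)$ are automatically ideals of it, hence $p$-ideals, so $Z(L)=Z(A)\oplus Z(B)$ is a decomposition of $p$-ideals.

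For part (2), assume $Z(L)=0$. The inclusion $B\subseteq Z_{L}(A)$ is immediate from $[B,A]=0$. Conversely, take $x\in Z_{L}(A)$ and write $x=a+b$; then $0=[x,A]=[a,A]+[b,A]=[a,A]$, so $a\in Z(A)$. By part (1), $Z(A)\subseteq Z(L)=0$, hence $a=0$ and $x=b\in B$, giving $Z_{L}(A)=B$. Interchanging the roles of $A$ and $B$ yields $Z_{L}(B)=A$.

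The computations are all short; the only point requiring care is the bookkeeping that keeps the two summands decoupled, concretely the identity $Z(A)=Z(L)\cap A$ and the component-wise vanishing $[a,l]=[b,l]=0$, both of which rest entirely on $[A,B]=[B,A]=0$. Once these are established, part (1) feeds directly into part (2), so the main (and very mild) obstacle is simply to set up the center identities cleanly before using them.
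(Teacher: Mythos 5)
Your proposal is correct and takes essentially the same route as the paper: both rest on the orthogonality $[A,B]=[B,A]=0$, prove (1) by splitting $x=x_{1}+x_{2}$ along $L=A\oplus B$ and reading off the components, and prove (2) by the identical componentwise computation together with $Z(A)=0$ from part (1). The only (harmless) difference is bookkeeping: you get $p$-closedness of $Z(A)$ and $Z(B)$ via the identity $Z(A)=Z(L)\cap A$ and Lemma \ref{l3.2} (2) applied to $Z(L)$, whereas the paper invokes Lemma \ref{l3.2} for $Z(A)$ and $Z(B)$ directly, and you make explicit the orthogonality that the paper uses implicitly.
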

  \begin{proof} (1) According to Lemma \ref{l3.2}, $Z(A)$ and $Z(B)$ are $p$-ideals of
    $L$. Since $Z(A) \cap Z(B)=\{0\},$ we have $Z(A) \oplus Z(B) \subseteq Z(L)$. Now,
     suppose $x\in Z(L)$ and $x=x_{1}+x_{2},$ where $x_{1}\in A, x_{2}\in B.$ Then $[x_{1},A]=[x-x_{2},A]=0.$
     Hence $x_{1}\in Z(A).$ Similarly, $x_{2}\in Z(B).$  Hence $Z(L)=Z(A)\oplus Z(B).$

   (2) $B\subseteq Z_{L}(A)$ is obviously true. It is sufficient to show that $Z_{L}(A)\subseteq B.$
     Since $L=A\oplus B,$ for any element $x$ of $Z_{L}(A),$ we have  $x=x_{1}+x_{2},$ where $x_{1}\in A, x_{2}\in B.$
      It follows that $$0=[x,a]=[x_{1}+x_{2},a]=[x_{1},a]+[x_{2},a]=[x_{1},a]$$
     for all $a\in A.$ Thus $x_{1}\in Z(A)=0.$ Hence $x=x_{1}+x_{2}=x_{2}\in B$ and $Z_{L}(A)\subseteq B.$
     Consequently, $Z_{L}(A)=B.$ Similarly, we can get that $Z_{L}(B)=A.$
\end{proof}
    \begin{lemma} \label{l3.8}  Let  $(L,[p])$ be a restricted  Leibniz algebra
    over $\mathbb{F}$
    such that $L=A\oplus B$. If $A$ and $B$ are  ideals of
    $L$ and $C$ is a  subalgebra of $L$ such that
    $A\subseteq C$, then $C=A\oplus (C\cap B)$ and
     $C\lhd L$ if and only if $(B\cap C)\lhd B$.
     \end{lemma}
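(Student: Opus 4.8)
The plan is to split the statement into two parts: the vector-space decomposition $C = A \oplus (C \cap B)$, and the equivalence $C \lhd L \iff (B \cap C) \lhd B$. I would establish the decomposition first, since it is needed throughout the equivalence. Starting from the hypotheses $A \subseteq C$ and the ambient direct sum $L = A \oplus B$, take $x \in C$ and write $x = a + b$ uniquely with $a \in A$, $b \in B$. Since $A \subseteq C$ and $C$ is a subspace, $b = x - a$ lies in $C$, hence in $C \cap B$, giving $C \subseteq A + (C \cap B)$. The reverse inclusion is immediate from $A \subseteq C$ and $C \cap B \subseteq C$, and the sum is direct because $A \cap (C \cap B) \subseteq A \cap B = 0$. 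This is just Dedekind's modular law specialized to $A \subseteq C$, so I expect no difficulty.

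Before the equivalence I would record the auxiliary fact $[A,B] = [B,A] = 0$. This holds because $A$ and $B$ are ideals: for $a \in A$, $b \in B$ we have $[a,b] \in [A,L] \subseteq A$ and $[a,b] \in [L,B] \subseteq B$, so $[a,b] \in A \cap B = 0$; the same argument gives $[b,a] = 0$. This orthogonality is what makes the cross terms in all subsequent bracket computations vanish.

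For the forward implication, assuming $C \lhd L$, I would check $(B \cap C) \lhd B$ directly: since $B \cap C \subseteq C$ and $C$ is an ideal, both $[B, B \cap C]$ and $[B \cap C, B]$ land in $C$; since $B \cap C \subseteq B$ and $B$ is an ideal, both land in $B$; hence both lie in $B \cap C$, which is exactly $(B \cap C) \lhd B$. For the converse, assuming $(B \cap C) \lhd B$, I would use $C = A \oplus (C \cap B)$ and expand $[L, C] = [A \oplus B,\, A \oplus (C \cap B)]$ into its four bracket terms: $[A,A] \subseteq A \subseteq C$; the mixed terms $[A, C \cap B] \subseteq [A,B] = 0$ and $[B, A] = 0$ vanish; and $[B, C \cap B] \subseteq B \cap C \subseteq C$ by hypothesis. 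Thus $[L, C] \subseteq C$, and the symmetric expansion yields $[C, L] \subseteq C$, so $C \lhd L$.

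The computations are routine once the orthogonality $[A,B] = [B,A] = 0$ and the decomposition of $C$ are in hand, so I anticipate no genuine obstacle. The only point requiring care is the bookkeeping in the four-term bracket expansions—correctly identifying each cross term as either vanishing or landing in $B \cap C$—and handling the left- and right-ideal conditions (both $[L,C]$ and $[C,L]$) separately, since the Leibniz bracket is not antisymmetric.
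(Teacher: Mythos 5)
Your proposal is correct, and it takes a different route from the paper on the first assertion. For the decomposition $C=A\oplus(C\cap B)$, the paper first shows $C\cap B$ is an ideal of $C$ (via $[C\cap B,C]\subseteq[B,C]\subseteq B$ and $[C\cap B,C]\subseteq[C,C]\subseteq C$), then invokes the isomorphism theorems: $(B+C)/B\cong C/(C\cap B)$ together with $(C+B)/B\cong A$ gives $A\cong C/(C\cap B)$, whence the direct sum --- an argument that implicitly leans on finite-dimensionality (assumed throughout the paper) to pass from the isomorphism plus $A\cap(C\cap B)=0$ to the internal decomposition. You instead apply the modular law directly: writing $x\in C$ as $x=a+b$ with $a\in A\subseteq C$ forces $b=x-a\in C\cap B$, which is shorter, avoids quotients entirely, and works without any dimension hypothesis. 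For the equivalence $C\lhd L\iff(B\cap C)\lhd B$, the paper simply declares it ``clear,'' so your worked-out version is a genuine supplement: you correctly isolate the key auxiliary fact $[A,B]=[B,A]=0$ (from $A,B\lhd L$ and $A\cap B=0$), use it to kill the cross terms in the four-term expansions of $[L,C]$ and $[C,L]$, and rightly treat the left- and right-ideal conditions separately since the bracket is not antisymmetric. All steps check out; if anything, your version is the more self-contained and more general proof.
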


  \begin{proof} Since $B$ is an  ideal and $C$ is a  subalgebra of $L$,
     $[C\cap B, C]\subseteq [B,
   C]\subseteq B$ and $[C\cap B, C]\subseteq [C,
   C]\subseteq C$. Then $[C\cap B, C]\subseteq C\cap B$,
   i.e., $C\cap B$ is an ideal of $C$.  So
   there is an isomorphism such that $(B+C)/B\cong
   C/C\cap B$. On the other hand, $(C+B)/B\cong A$. Hence $A\cong
    C/C\cap B$ and $C=A\oplus (C\cap B)$.
    The second statement is clear.
\end{proof}

 \begin{theorem} \label{t3.9}   Suppose that  a  restricted Leibniz algebra
   $(L,[p])$ over $\mathbb{F}$  has decompositions of $p$-ideals
\begin{equation}L=M_1\oplus M_2\oplus \cdots \oplus
      M_s,
 \label{e4.1.60a}
\end{equation}
  \begin{equation}L=N_1\oplus  N_2\oplus \cdots \oplus
      N_t,
 \label{e4.1.60b}
 \end{equation}
where $M_1,\cdots, M_s$ and $N_1,\cdots, N_t$ can not be decomposed into the direct sum of $p$-ideals.
If $Z(L)=0,$ then $s=t$ and $M_i=N_i,$ $i=1,2,\cdots, s$ after changing the
     orders.
\end{theorem}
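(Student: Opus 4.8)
The plan is to prove the uniqueness of the decomposition by a Krull–Schmidt type argument built around the projection maps attached to the two decompositions. The key tool is Lemma \ref{l3.6}: if $L$ is $p$-indecomposable and a sum of $L$-$p$-endomorphisms equals $\mathrm{id}_L$, then one of the summands is an $L$-$p$-automorphism. So the first task is to manufacture the relevant $L$-$p$-endomorphisms from the two given decompositions and verify they satisfy the hypotheses of that lemma.

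**Setting up the projections.** First I would let $\pi_1,\dots,\pi_s$ be the projections of $L$ onto $M_1,\dots,M_s$ and $\rho_1,\dots,\rho_t$ the projections onto $N_1,\dots,N_t$, all taken with respect to the respective decompositions (\ref{e4.1.60a}) and (\ref{e4.1.60b}). As noted in the Example preceding Lemma \ref{l3.2}, each such projection onto a $p$-ideal is an $L$-$p$-endomorphism of $L$, and one has $\sum_{i=1}^{s}\pi_i=\mathrm{id}_L$ and $\sum_{j=1}^{t}\rho_j=\mathrm{id}_L$. The next step is to fix one indecomposable summand of the first decomposition, say $M_1$, and restrict attention to it. For each $j$ consider the composite $\rho_j|_{M_1}$ followed by $\pi_1$; summing over $j$ gives $\sum_{j=1}^{t}\pi_1\rho_j\iota_1=\mathrm{id}_{M_1}$, where $\iota_1$ is the inclusion of $M_1$. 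Each $\pi_1\rho_j\iota_1$ is an $L$-$p$-endomorphism of $M_1$ (using Lemma \ref{l3.4} for composites, and the fact that $M_1$ is itself a $p$-indecomposable restricted Leibniz algebra since it cannot be decomposed into a direct sum of $p$-ideals).

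**Applying the indecomposability lemma.** Now I would apply Lemma \ref{l3.6} to these endomorphisms of the $p$-indecomposable algebra $M_1$. It yields an index $j$, say $j=1$ after reindexing, such that $\pi_1\rho_1\iota_1\in\mathrm{Aut}_p(M_1)$. The plan is to upgrade this to the statement $M_1=N_1$. The map $\rho_1|_{M_1}:M_1\to N_1$ is injective because $\pi_1\rho_1\iota_1$ is an automorphism, so $\dim M_1\le\dim N_1$; symmetrically, running the same argument with the roles of the two decompositions exchanged gives the reverse inequality, so $\dim M_1=\dim N_1$ and $\rho_1|_{M_1}:M_1\to N_1$ is an isomorphism of restricted Leibniz algebras. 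To identify $M_1$ with $N_1$ as subspaces of $L$, I would use $Z(L)=0$ together with Lemma \ref{l3.7}(2): in each decomposition the centralizer of a summand is the complementary sum of the other summands, which lets me pin down $N_1$ as the image and conclude $M_1=N_1$ inside $L$.

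**Induction and the main obstacle.** Having matched $M_1$ with $N_1$, I would pass to the quotient $L/M_1\cong M_2\oplus\cdots\oplus M_s\cong N_2\oplus\cdots\oplus N_t$, which again has trivial center (by Lemma \ref{l3.7}(1), since $Z(L)=Z(M_1)\oplus\cdots=0$ forces each $Z(M_i)=0$), and induct on the number of summands to obtain $s=t$ and $M_i=N_i$ after reordering. I expect the main obstacle to be the bookkeeping in the induction step: one must check that removing the matched pair $M_1=N_1$ leaves genuine $p$-ideal decompositions of a quotient that still satisfies $Z(L)=0$ and whose summands remain $p$-indecomposable, so that the inductive hypothesis truly applies. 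The exchange argument producing both dimension inequalities, and the careful use of $Z(L)=0$ via Lemma \ref{l3.7} to convert an abstract isomorphism $\rho_1|_{M_1}$ into an honest equality of subspaces, are the delicate points; the rest is the standard Krull–Schmidt skeleton specialized to the $L$-$p$-endomorphism ring.
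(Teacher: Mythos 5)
Your skeleton is the same as the paper's (projections onto the summands, apply Lemma \ref{l3.6} to $\sum_{j=1}^{t}\pi_{1}\rho_{j}|_{M_{1}}=\mathrm{id}_{M_{1}}$ on the indecomposable $M_{1}$, reindex so that $\pi_{1}\rho_{1}|_{M_{1}}\in \mathrm{Aut}_{p}M_{1}$, identify $M_{1}=N_{1}$, induct), but the step by which you identify $M_{1}$ with $N_{1}$ contains a genuine gap. You claim that ``running the same argument with the roles of the two decompositions exchanged gives the reverse inequality'' $\dim N_{1}\leq \dim M_{1}$. It does not: applying Lemma \ref{l3.6} to $\mathrm{id}_{N_{1}}=\sum_{k=1}^{s}\rho_{1}\pi_{k}|_{N_{1}}$ only produces \emph{some} index $k$ with $\rho_{1}\pi_{k}|_{N_{1}}$ invertible, and nothing forces $k=1$; you obtain $\dim N_{1}\leq\dim M_{k}$ for an uncontrolled $k$. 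This is exactly the classical pitfall in Krull--Schmidt arguments, and your subsequent phrase that Lemma \ref{l3.7}(2) ``lets me pin down $N_{1}$ as the image'' is an assertion, not a proof. A second, smaller omission: Lemma \ref{l3.6} explicitly requires the \emph{partial sums} $\sum_{i=1}^{j}\varphi_{i}$ to be $L$-$p$-endomorphisms, which is not automatic since a sum of $p$-endomorphisms need not respect $[p]$; the paper spends a substantial portion of its proof verifying this for the maps $\sum_{i=1}^{j}\tau_{i}\rho_{i}$ using $[N_{i},N_{j}]=0$, whereas you invoke the lemma without checking that hypothesis.

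The paper closes the hole you left open without any surjectivity or dimension count, using only the injectivity you already have. Since $\pi_{1}\rho_{1}|_{M_{1}}$ is bijective, $\rho_{1}|_{M_{1}}$ is injective, i.e.\ $M_{1}\cap N=0$ where $N=N_{2}\oplus\cdots\oplus N_{t}=\ker\rho_{1}$. Both $M_{1}$ and $N$ are ideals, so $[M_{1},N]\subseteq M_{1}\cap N=0$, whence $M_{1}\subseteq Z_{L}(N)=N_{1}$ by Lemma \ref{l3.7}(2) (this is where $Z(L)=0$ enters). Then Lemma \ref{l3.8} yields $N_{1}=M_{1}\oplus(N_{1}\cap M)$ with $M=M_{2}\oplus\cdots\oplus M_{s}$, and the indecomposability of $N_{1}$ forces $N_{1}=M_{1}$. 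Note also that once $M_{1}=N_{1}$, the paper gets $M=Z_{L}(M_{1})=Z_{L}(N_{1})=N$ as subspaces and inducts inside $L$ directly, sidestepping the quotient bookkeeping you flag as your main obstacle; your quotient route would also work (images of the remaining summands in $L/M_{1}$ determine them, since all lie in $M=Z_{L}(M_{1})$, which maps isomorphically onto $L/M_{1}$), but only after the containment argument above replaces the flawed symmetry step.
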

\begin{proof} We prove this theorem by induction on $n$.
   If $s=1$, then  $L$ can not be decomposed into the direct sum of  $p$-ideals. So $t=1$
  and $M_1=N_1=L$.

  Now put $s>1$, naturally $t>1$, too.
   Let $\pi$ be the projection of $L$ to $M_1$ with respect to the decomposition  $(\ref{e4.1.60a}),$
   $\sigma$ the imbedding of $M_1$
   to $L,$  $\rho_i$ the projection of $L$ to $N_i$  with respect to the
     decomposition $(\ref{e4.1.60b})$ and $\tau_i$  the imbedding of
    $N_i$ to $L$. Then  $\pi,
   \rho_1, \cdots, \rho_t$ and $\sum\limits_{i=1}^k\rho_i (1\le k\le
    t)$ are $L$-$p$-endomorphisms of  $L$ and
   $\rho_1+\rho_2+\cdots +\rho_t={\rm id}_{L}.$
   Letting $ \pi_i^* =\pi \tau_i=\pi|_{N_i}, \rho_i^*=\rho_i\sigma=\rho_i|_{ M_1}$ for
   any $i=1,2,\cdots,t,$  then  $\pi_i^*\rho_i^*$ is the $M_1$-$p$-endomorphism of $M_1$.

   Defined $\sum\limits_{i=1}^j\tau_i\rho_i:L\rightarrow L$ by
   $(\sum\limits_{i=1}^j\tau_i\rho_i)(x)=\sum\limits_{i=1}^j\tau_i\rho_i(x)$ for all $x\in L, 1\leq j\leq t$.
   We verify that the mapping is an  $L$-$p$-endomorphism of  $L$.
   In fact, for $x,y\in L,$ we write $x=\sum\limits_{i=1}^{t}x_{i}, y=\sum\limits_{i=1}^{t}y_{i},$
   where $x_{i},y_{i}\in B_{i}(1\leq i\leq t).$ So $$\sum_{i=1}^{j}\tau_{i}\rho_{i}(x^{[p]})=
   \sum_{i=1}^{j}\tau_{i}(\rho_{i}(x)^{[p]})=\sum_{i=1}^{j}\tau_{i}(x_{i}^{[p]})=\sum_{i=1}^{j}x_{i}^{[p]}.$$
   On the other hand, from $L=N_1\oplus  N_2\oplus \cdots \oplus N_t,$ we can obtain that
   $[N_{i}, N_{j}]=0$ for $1\leq i,j\leq t$ with $i\neq j.$ Hence we may imply that
   $$(\sum_{i=1}^{j}\tau_{i}\rho_{i}(x))^{[p]}=
   (\sum_{i=1}^{j}x_{i})^{[p]}=\sum_{i=1}^{j}x_{i}^{[p]}$$ and
   $\sum\limits_{i=1}^{j}\tau_{i}\rho_{i}(x^{[p]})=(\sum\limits_{i=1}^{j}\tau_{i}\rho_{i}(x))^{[p]}.$
  Next we show that $\sum\limits_{i=1}^{j}\tau_{i}\rho_{i}$ is an endomorphism of  $L.$
  By virtue of $[N_{i},N_{j}]=0,$ we have
   \begin{equation*}
\begin{split}
&\sum_{i=1}^{j}\tau_{i}\rho_{i}[x,y]=\sum_{i=1}^{j}\tau_{i}[\rho_{i}(x),\rho_{i}(y)]
=\sum_{i=1}^{j}[x_{i},y_{i}]
=[\sum_{i=1}^{j}x_{i},\sum_{i=1}^{j}y_{i}]
=[\sum_{i=1}^{j}\tau_{i}\rho_{i}(x),\sum_{i=1}^{j}\tau_{i}\rho_{i}(y)].
\end{split}
\end{equation*}
Finally, using similar method we may verify that
$$(\sum\limits_{i=1}^{j}\tau_{i}\rho_{i})L_{x}=L_{x}(\sum\limits_{i=1}^{j}\tau_{i}\rho_{i})
\quad {\rm and}\quad
(\sum\limits_{i=1}^{j}\tau_{i}\rho_{i})R_{x}=R_{x}(\sum\limits_{i=1}^{j}\tau_{i}\rho_{i}).$$
Thus $\sum\limits_{i=1}^{j}\tau_{i}\rho_{i}$ is an
$L$-$p$-endomorphism of $L$.
    Furthermore,
   $\pi(\sum\limits_{i=1}^j\tau_i\rho_i)\sigma=
  \sum\limits_{i=1}^j\pi_i^*\rho_i^*=\sum\limits_{i=1}^j\pi_i^*\rho_i|_{ M_1}$ is an
   $ M_1$-endomorphism of $M_1$.
    For each $h\in  M_1$, we have
    $h=\pi(h)=\pi(\sum\limits_{i=1}^t\rho_i(h))=\sum\limits_{i=1}^t\pi_i^*\rho_i^*(h),$
    then $\sum\limits_{i=1}^t\pi_i^*\rho_i^*={\rm id}_{M_1}.$ So
     there exists an index $i$ satisfying $\pi_i^*\rho_i^*\in{\rm
   Aut}_{p}M_1$ by virtue of Lemma \ref{l3.6}. If needed, after changing the order of $N_1,$ $N_2,$
    $\cdots,$ $ N_t,$ we can get $i=1,$ $\pi_1^*\rho_1^*\in{\rm
    Aut}_{p}M_1.$ Thus $\rho_1^*$ is a bijection. Let
     $M=M_2\oplus  M_3 \oplus\cdots\oplus  M_s,\, N=N_2\oplus
     N_3 \oplus\cdots\oplus  N_t.$  By Lemma \ref{l3.7}, we have
   $Z(M)=Z(N)=\{0\}$ and $M=Z_L(M_1),  M_1=Z_L(M),$ $\ker
  \rho_1=N=Z_L(N_1),  N_1=Z_L(N).$ Hence
   $\{0\}=\ker \rho_1^*=M_1\cap \ker\rho_1=M_1\cap  N.$
  So we have $M_1\subseteq  Z_L(N)=N_1,$
  $N_1=M_1\oplus (N_1 \cap M)$ by Lemma \ref{l3.8}.
   But $N_1$ can not be decomposed into the direct sum of  $p$-ideals, then $N_1=M_1.$ By inductive assumption
  we obtain the desired result.
\end{proof}
\begin{corollary}
Let $(L,[p])$  be a restricted Lie algebra
    over $\mathbb{F}$ with trivial center. If $L$ has a decomposition of $p$-ideals
$L=A_1\oplus A_2\oplus \cdots \oplus A_s$, then the decomposition is unique after changing the orders,
where $A_1,\cdots, A_s$ can not be decomposed into the direct sum of  $p$-ideals.
\end{corollary}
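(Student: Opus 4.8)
The plan is to deduce this corollary directly from Theorem \ref{t3.9}, since a restricted Lie algebra is in particular a restricted Leibniz algebra (as observed immediately after Definition \ref{d1.1.18}). The first step is to reconcile the two notions of center. Theorem \ref{t3.9} requires $Z(L)=0$, where $Z(L)=\{x\in L\mid [x,L]=0\}$ is the right center, whereas the hypothesis here is that the restricted Lie algebra $L$ has trivial center in the usual sense. For a Lie algebra the bracket is antisymmetric, so $[x,L]=0$ holds if and only if $[L,x]=0$; hence the right center $Z(L)$ coincides with the classical Lie-theoretic center, and the hypothesis delivers exactly $Z(L)=0$ in the form needed by Theorem \ref{t3.9}.

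With this identification in place, the notions of $p$-ideal, and of being indecomposable into a direct sum of $p$-ideals, are literally the same whether $L$ is regarded as a restricted Lie algebra or as a restricted Leibniz algebra, because both depend only on the bracket and the $p$-mapping. Thus the given decomposition $L=A_1\oplus\cdots\oplus A_s$ into indecomposable $p$-ideals in the Lie sense is simultaneously a decomposition of $L$ into indecomposable $p$-ideals in the Leibniz sense, so the constituents $A_i$ satisfy the standing hypotheses of Theorem \ref{t3.9}.

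To establish uniqueness, I would take an arbitrary second decomposition $L=B_1\oplus\cdots\oplus B_t$ of $L$ into indecomposable $p$-ideals and apply Theorem \ref{t3.9} with $M_i=A_i$ and $N_j=B_j$. Since $Z(L)=0$, the theorem yields $s=t$ and, after a suitable permutation of the indices, $A_i=B_i$ for every $i$, which is precisely the asserted uniqueness up to reordering. No substantive obstacle arises; the proof is a pure specialization, and the only point that genuinely needs checking is the translation of the center hypothesis, which the antisymmetry of the Lie bracket settles at once.
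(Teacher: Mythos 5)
Your proposal is correct and coincides with the paper's own treatment: the paper states this corollary without proof as an immediate specialization of Theorem \ref{t3.9}, exactly the reduction you carry out. Your explicit check that antisymmetry makes the right center $Z(L)$ agree with the usual Lie center (and that Lie $p$-ideals are Leibniz $p$-ideals, the two ideal conditions collapsing into one) is precisely the translation the paper leaves tacit.
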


\end{document}